\numberwithin{equation}{section}
\theoremstyle{plain}
\newtheorem{theorem}{Theorem}[section]
\newtheorem{lemma}{Lemma}[section]
\newtheorem{corollary}{Corollary}[section]
\newtheorem{proposition}{Proposition}[section]
\theoremstyle{remark}
\newtheorem*{remark}{Remark}
\newcommand{\indI}{\mathbbm{1}}
\def\ci{\perp\!\!\!\perp}
\begin{document}

\begin{frontmatter}

\title{Fractional Poisson Fields and Martingales}
\runtitle{Fractional Poisson Fields and Martingales}


\author{\fnms{Giacomo} \snm{Aletti}\ead[label=e1]{giacomo.aletti@unimi.it}\thanksref{t1}}
\address{\printead{e1}}
\author{\fnms{Nikolai} \snm{Leonenko}\ead[label=e2]{LeonenkoN@Cardiff.ac.uk}\thanksref{t2}}
\address{\printead{e2}}
\author{\fnms{Ely} \snm{Merzbach}\ead[label=e3]{ely.merzbach@biu.ac.il}\thanksref{t3}}
\address{\printead{e3}}
\affiliation{\thanksmark{t1}ADAMSS Center \& Universit\`a degli Studi di Milano, Italy\\
\thanksmark{t2}Cardiff University, United Kingdom\\
\thanksmark{t3}Bar-Ilan University, Israel}

\thankstext{t2}{N. Leonenko was supported in particular by Cardiff Incoming Visiting
Fellowship Scheme and International Collaboration Seedcorn Fund and
Australian Research Council's Discovery Projects funding scheme (project
number DP160101366)}

\runauthor{G. Aletti, N. Leonenko, E. Merzbach}

\begin{abstract}
We present new properties for the Fractional Poisson process and the Fractional Poisson field on the plane. A martingale characterization for Fractional Poisson processes is given. We extend this result to Fractional Poisson fields, obtaining some other characterizations. The fractional differential equations are studied. We consider a more general Mixed-Fractional Poisson process and show that this process is the stochastic solution of a system of fractional differential-difference equations. Finally, we give some simulations of the Fractional Poisson field on the plane.
\end{abstract}

\begin{keyword}[class=MSC]
\kwd[Primary ]{60G55}
\kwd{60G60}
\kwd[; secondary ]{60G44} 
\kwd{60G57} 
\kwd{62E10}
\kwd{60E07}
\end{keyword}

\begin{keyword}
\kwd{Fractional Poisson fields}
\kwd{inverse subordinator}
\kwd{martingale characterization}
\kwd{second order statistics}
\kwd{fractional differential equations}
\end{keyword}

\end{frontmatter}

There are several different approaches to the fundamental concept of Fractional Poisson process (FPP) on the real line. The ``renewal'' definition extends the characterization of the Poisson process as a sum of independent non-negative exponential random variables. If one changes the law of interarrival times to the Mittag-Leffler distribution (see \cite{MGS,MGV1,RS}), the FPP arises. 
A second approach is given in \cite{BO1}, where the renewal approach to the Fractional Poisson process is developed and it is proved that its one-dimensional distributions coincide with the solution to fractionalized state probabilities. In \cite{MNV} it is shown that a kind of Fractional Poisson process can be constructed by using an ``inverse subordinator'', which leads to a further approach. 

In \cite{LM}, following this last method, the FPP is generalized and defined afresh, obtaining a Fractional Poisson random field (FPRF) parametrized by points of the Euclidean space $\mathbb{R}_+^{2} $, in the same spirit it has been done before for Fractional Brownian fields, see, e.g., \cite{HM1,IM2,IM1,LRMT}.

The starting point of our extension will be the set-indexed Poisson process
which is a well-known concept, see, e.g., \cite{HM1,IM1,MN,MS,SKM}.

In this paper, we first present a martingale characterization of the Fractional
Poisson process. We extend this
characterization to FPRF
using the concept of increasing path and strong martingales. 
This characterization permits us to
give a definition of a set-indexed Fractional Poisson process. We study
the fractional differential equation for FPRF.
Finally, we study
Mixed-Fractional Poisson processes.

The paper is organized as follows. In the next section, we collect
some known results from the theory of subordinators
and inverse subordinators, see \cite{B,MS1,VTM1,VTM2} among others. 
In Section~\ref{sect:3}, we prove a martingale characterization of the FPP, which is a generalization of the Watanabe Theorem.  In Section~\ref{sect:6}, another generalization called ``Mixed-Fractional Poisson process'' is introduced and some distributional properties are studied as well as Watanabe characterization
is given. Section~\ref{sect:4} is devoted to FPRF. We begin by computing covariance for this process, then we give some characterizations using increasing paths and intensities. We present a Gergely-Yeshow characterization and discuss random time changes. Fractional differential equations are discussed on Section~\ref{sect:5}. 

Finally, we present some simulations for the FPRF.  


\section{Inverse Subordinators}

This section collects some known resuts from the theory of subordinators and inverse subordinators \cite{B,MS1,VTM1,VTM2}.

\subsection{Subordinators and their inverse}

Consider an increasing L\'{e}vy process $L=\{L(t),\ t\geq 0\},$ starting
from $0,$ which is continuous from the right with left limits (cadlag),
continuous in probability, with independent and stationary increments. Such
a process is known as a L\'{e}vy subordinator with Laplace exponent 
\begin{equation*}
\phi (s)=\mu s+\int_{(0,\infty )}(1-e^{-sx})\Pi (dx),\quad s\geq 0,
\label{LS}
\end{equation*}
where $\mu \geq 0$ is the drift and the L\'{e}vy measure $\Pi $ on $\mathbb{R%
}_{+}\cup \left\{ 0\right\} $ satisfies 
\begin{equation*}
\int_{0}^{\infty }\min (1,x)\Pi (dx)<\infty .  
\end{equation*}

This means that 
\begin{equation*}
\mathrm{E}e^{-sL(t)}=e^{-t\phi (s)},\ s\geq 0. 
\end{equation*}

Consider the inverse subordinator $Y(t),\ t\geq 0,$ which is given by the
first-passage time of~$L:$%
\begin{equation*}
Y(t)=\inf \left\{ u\geq 0:L(u)>t\right\} ,t\geq 0.  
\end{equation*}%
The process $Y(t),\ t\geq 0,$ is non-decreasing and its sample paths are
a.s.\ continuous if $L$ is
strictly increasing. 

We have 
\begin{equation*}
\left\{ (u_{i},t_{i})\colon L(u_{i})<t_{i},i=1,\ldots ,n\right\} =\left\{ (u_{i},t_{i})\colon 
Y(t_{i})>u_{i},i=1,\ldots ,n\right\} ,  
\end{equation*}
and it is known \cite{M-Ta,PSW05,VTM1,VTM2} that for any $p>0,\mathrm{E}Y^{p}(t)<\infty $.

Let $U(t)=\mathrm{E}Y(t)$ be the renewal function. Since
\begin{equation*}
\tilde{U}(s)=\int_{0}^{\infty }U(t)e^{-st}dt=%
\frac{1}{s\phi (s)},  
\end{equation*}%
then $\tilde{U}$ characterizes the inverse process $Y$, since $\phi $
characterizes $L.$

We get a covariance formula \cite{VTM1,VTM2}
\begin{equation*}
\mathrm{Cov}(Y(t),Y(s))=\int_{0}^{\min
(t,s)} \mspace{-50mu} (U(t-\tau )+U(s-\tau ))dU(\tau )-U(t)U(s).  
\end{equation*}

The most important example is considered in the next section, but there are
some other examples.

\subsection{Inverse stable subordinators}

Let $L_{\alpha }=\{L_{\alpha }(t),t\geq 0\}$, be an $\alpha -$stable
subordinator with $\phi (s)=s^{\alpha },0<\alpha <1$. 
The density of $L_{\alpha }(1)$ is of the form \cite{UZ_SD99}
\begin{equation}
g_{\alpha }(x)
=\frac{1}{\pi }\mathop{\displaystyle \sum}%
\limits_{k=1}^{\infty }(-1)^{k+1}\frac{\Gamma (\alpha k+1)}{k!}\frac{1}{%
x^{\alpha k+1}}\sin (\pi k\alpha )=\frac{1}{x}W_{-\alpha ,0}(-x^{-\alpha
}).  \label{10}
\end{equation}%

Here we use the Wright' s generalized Bessel function (see, e.g., \cite{HMS}%
) 
\begin{equation}
W_{\gamma ,\beta }(z)=\mathop{\displaystyle \sum}\limits_{k=0}^{\infty }%
\frac{z^{k}}{\Gamma (1+k)\Gamma (\beta +\gamma k)},\quad z\in \mathbb{C},
\label{WR}
\end{equation}%
where $\gamma >-1,$ and $\beta \in \mathbb{R}$.
The set of jump times of $L_{\alpha }$ is a.s. dense. The L\'{e}vy
subordinator is strictly increasing, since the process $L_{\alpha}$ admits a density.

Then the inverse stable subordinator

\begin{equation*}
Y_{\alpha }(t)=\inf \{u\geq 0:L_{\alpha }(u)>t\}
\end{equation*}
has density \cite[p.110]{MS1} (see also \cite{PolSca16})
\begin{equation}
f_{\alpha }(t,x)=\frac{d}{dx}\mathrm{P}\{Y_{\alpha }(t) \leq x\}
=\frac{t}{\alpha }x^{-1-\frac{1}{\alpha }}g_{\alpha }(tx^{-%
\frac{1}{\alpha }}),\quad x> 0,\quad t>0.  \label{D1}
\end{equation}
The Laplace transform of the density $f_{a}(t,x)$ is
\begin{equation}
\int_{0}^{\infty }e^{-st}f_{\alpha}(t,x)dt=s^{\alpha -1}e^{-xs^{\alpha }},\quad
s\geq 0,  \label{L1}
\end{equation}

Its paths are continuous and nondecreasing. For $\alpha =1/2,$ the inverse
stable subordinator is the running supremum process of Brownian motion, and
for $\alpha \in (0,1/2)$ this process is the local time at zero of a
strictly stable L\'{e}vy process of index $\alpha /(1-\alpha ).$

Let 
\begin{equation}
E_{\alpha }(z)=\sum_{k=0}^{\infty }\frac{z^{k}}{\Gamma (\alpha k+1)},\
\alpha >0,\ z\in \mathbb{C}  \label{ML1}
\end{equation}%
be the Mittag-Leffler function \cite{HMS}, and recall the following:

{i) The Laplace transform of function $E_{\alpha }(-\lambda t^{\alpha })$ is of the form 
\begin{equation*}
\int_{0}^{\infty }e^{-st}E_{\alpha }(-\lambda t^{\alpha })dt=\frac{s^{\alpha -1}}{%
\lambda +s^{\alpha }},\quad 0<\alpha <1,\ t\geq 0, \Re (s)>|\lambda|^{1/\alpha}.
\end{equation*}%
}

(ii) The function $E_{\alpha }(\lambda t^{\alpha })$ is an eigenfunction at the 
the fractional Caputo-Djrbashian derivative $\mathrm{D}_{t}^{\alpha }$ with eigenvalue
$\lambda$ \cite[p.36]{MS1}
\begin{equation*}
\mathrm{D}_{t}^{\alpha }E_{\alpha }(\lambda t^{\alpha })=\lambda E_{\alpha }(\lambda t^{\alpha
}),\quad 0<\alpha <1, \lambda \in \mathbb{R},
\end{equation*}%
where $\mathrm{D}_{t}^{\alpha }$
is defined as (see \cite{MS1}) 
\begin{equation}
\mathrm{D}_{t}^{\alpha }u(t)=\frac{1}{\Gamma (1-\alpha )}\int_{0}^{t}\frac{%
du(\tau )}{d\tau }\frac{d\tau }{(t-\tau )^{\alpha }},\quad 0<\alpha <1.
\label{FD}
\end{equation}
Note that the classes of functions for which the 
Caputo-Djrbashian derivative is well defined are discussed in 
\cite[Sections 2.2. and 2.3]{MS1}
(in particular one can use the class of absolutely continuous functions).

\begin{proposition}\label{prop:2.1} 
The $\alpha$-stable inverse subordinators satisfy the following properties:
\begin{enumerate}
\item[(i)] 
\begin{equation*}
\mathrm{E}e^{-sY_{\alpha }(t)}=\sum_{n=0}^{\infty }\frac{(-st^{\alpha })^{n}%
}{\Gamma (\alpha n+1)}=E_{\alpha }(-st^{\alpha }),\quad s>0.
\end{equation*}

\item[(ii)] Both processes $L_{\alpha }(t),t\geq 0$ and $Y_{\alpha }(t)$ are
self-similar 
\begin{equation*}
\frac{L_{\alpha }(at)}{a^{1/\alpha }}\overset{d}{=}L_{\alpha }(t),\;\frac{%
Y_{\alpha }(at)}{a^{\alpha }}\overset{d}{=}Y_{\alpha }(t),\quad a>0.
\end{equation*}

\item[(iii)] For $0<t_{1}<\cdots <t_{k}$,
\begin{equation*}
\frac{\partial^{k} \mathrm{E}(Y_{\alpha }(t_{1})\cdots Y_{\alpha }(t_{k}))}{%
\partial t_{1}\cdots \partial t_{k}}=\frac{1}{\Gamma ^{k}(\alpha )}\frac{1}{%
\left[ t_{1}(t_{2}-t_{1})\cdots (t_{k}-t_{k-1})\right] ^{1-\alpha }}.
\end{equation*}

In particular,

\noindent \textrm{(A)} 
\begin{equation*}
\mathrm{E}Y_{\alpha }(t)=\frac{t^{\alpha }}{\Gamma (1+\alpha )};\mathrm{E[}%
Y_{\alpha }(t)]^{\nu }=\frac{\Gamma (\nu +1)}{\Gamma (\alpha \nu +1)}%
t^{\alpha \nu },\quad\nu >0;
\end{equation*}%

\noindent \textrm{(B)} 
\begin{equation}
\mathrm{Cov}(Y_{\alpha }(t),Y_{\alpha }(s))
=\frac{1}{\Gamma (1+\alpha )\Gamma (\alpha )}\int_{0}^{\min (t,s)}\left(
(t-\tau )^{\alpha }+(s-\tau )^{\alpha }\right) \tau ^{\alpha -1}d\tau -\frac{%
(st)^{\alpha }}{\Gamma ^{2}(1+\alpha )}.  \label{COV4}
\end{equation}%

\end{enumerate}
\end{proposition}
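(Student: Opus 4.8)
The plan is to treat the five claims in increasing order of difficulty, using throughout the density \eqref{D1}, its Laplace transform \eqref{L1}, and the duality $\{Y_{\alpha}(t)>u\}=\{L_{\alpha}(u)<t\}$ recorded in Subsection~2.1. For (i) I would compute the Laplace transform in $t$ of $\mathrm{E}e^{-sY_{\alpha}(t)}=\int_{0}^{\infty}e^{-sx}f_{\alpha}(t,x)\,dx$. Interchanging the order of integration and inserting \eqref{L1},
\[
\int_{0}^{\infty}e^{-pt}\,\mathrm{E}e^{-sY_{\alpha}(t)}\,dt=p^{\alpha-1}\int_{0}^{\infty}e^{-x(s+p^{\alpha})}\,dx=\frac{p^{\alpha-1}}{s+p^{\alpha}},
\]
which, by the Mittag-Leffler Laplace transform (i) recalled above with $\lambda=s$, is exactly the Laplace transform of $E_{\alpha}(-st^{\alpha})$; uniqueness of Laplace transforms gives the claim.

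For (ii), the subordinator relation follows from $\mathrm{E}e^{-sL_{\alpha}(at)}=e^{-ats^{\alpha}}=\mathrm{E}e^{-s\,a^{1/\alpha}L_{\alpha}(t)}$ together with stationarity and independence of increments, which upgrade this to equality of all finite-dimensional laws of $L_{\alpha}(a\,\cdot)$ and $a^{1/\alpha}L_{\alpha}(\cdot)$. The scaling of $Y_{\alpha}$ is then read off from the first-passage definition: in $Y_{\alpha}(at)=\inf\{u:L_{\alpha}(u)>at\}$ I substitute $u=a^{\alpha}v$ and use $L_{\alpha}(a^{\alpha}v)\overset{d}{=}aL_{\alpha}(v)$ (self-similarity with factor $a^{\alpha}$, since $(a^{\alpha})^{1/\alpha}=a$), so that $Y_{\alpha}(at)\overset{d}{=}a^{\alpha}\inf\{v:L_{\alpha}(v)>t\}=a^{\alpha}Y_{\alpha}(t)$.

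Part (iii) is the crux. I would start from the layer-cake representation $Y_{\alpha}(t)=\int_{0}^{\infty}\indI(L_{\alpha}(u)<t)\,du$, giving
\[
\mathrm{E}\Bigl(\prod_{i=1}^{k}Y_{\alpha}(t_{i})\Bigr)=\int_{(0,\infty)^{k}}\mathrm{P}\bigl(L_{\alpha}(u_{1})<t_{1},\dots,L_{\alpha}(u_{k})<t_{k}\bigr)\,du_{1}\cdots du_{k}.
\]
Applying $\partial^{k}/\partial t_{1}\cdots\partial t_{k}$ and differentiating under the integral turns the integrand into the joint density of $(L_{\alpha}(u_{1}),\dots,L_{\alpha}(u_{k}))$ at $(t_{1},\dots,t_{k})$. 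The key structural observation is that, since $L_{\alpha}$ is strictly increasing and $t_{1}<\cdots<t_{k}$, this density vanishes off the ordered simplex $u_{1}<\cdots<u_{k}$, where by independence and stationarity of increments it factors as $\prod_{j=1}^{k}f_{L_{\alpha}(u_{j}-u_{j-1})}(t_{j}-t_{j-1})$ (with $u_{0}=t_{0}=0$). The substitution $v_{j}=u_{j}-u_{j-1}$ then decouples the integral into $\prod_{j=1}^{k}\int_{0}^{\infty}f_{L_{\alpha}(v)}(t_{j}-t_{j-1})\,dv$, and each factor is precisely the renewal density $U'(t_{j}-t_{j-1})$ because $U(x)=\int_{0}^{\infty}\mathrm{P}(L_{\alpha}(u)<x)\,du$. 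Since the stable case gives $U(x)=x^{\alpha}/\Gamma(1+\alpha)$ and hence $U'(x)=x^{\alpha-1}/\Gamma(\alpha)$, the product is exactly the asserted expression. I expect the main obstacle to be rigor in this support-and-factorization step and in justifying differentiation under the integral sign; the latter I would handle via the smoothness and integrability of $f_{\alpha}$ together with dominated convergence.

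Finally, (A) and (B) are corollaries. The identity $\mathrm{E}Y_{\alpha}(t)=U(t)$ follows by inverting $\tilde U(s)=1/(s\phi(s))=s^{-\alpha-1}$, and self-similarity (ii) gives $Y_{\alpha}(t)\overset{d}{=}t^{\alpha}Y_{\alpha}(1)$, reducing the $\nu$-th moment to $\mathrm{E}[Y_{\alpha}(1)]^{\nu}$; via the density \eqref{D1} and the change of variable $y=t x^{-1/\alpha}$ this equals $\mathrm{E}[L_{\alpha}(1)]^{-\alpha\nu}$, and the standard negative-moment identity $\mathrm{E}[L_{\alpha}(1)]^{-p}=\Gamma(1+p/\alpha)/\Gamma(1+p)$ (a one-line Gamma-integral from $\mathrm{E}e^{-sL_{\alpha}(1)}=e^{-s^{\alpha}}$) yields the formula with $p=\alpha\nu$. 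For (B) I would substitute $U(\tau)=\tau^{\alpha}/\Gamma(1+\alpha)$ and $dU(\tau)=\tau^{\alpha-1}/\Gamma(\alpha)\,d\tau$ into the general covariance formula of Subsection~2.1, which reproduces \eqref{COV4} verbatim; equivalently, it follows by integrating the $k=2$ case of (iii) and subtracting $\mathrm{E}Y_{\alpha}(t)\,\mathrm{E}Y_{\alpha}(s)$.
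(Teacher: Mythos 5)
Your proof is correct, but there is nothing in the paper to compare it against step by step: the paper's entire ``proof'' of Proposition~\ref{prop:2.1} is the citation ``See \cite{B,VTM1,VTM2}'', deferring to Bingham and Veillette--Taqqu. What you have done is reconstruct that material in a self-contained way, and the reconstruction checks out. Parts (i) and (ii) are exactly the standard arguments (for (ii), the upgrade from one-dimensional marginals to finite-dimensional laws via the L\'evy property is precisely what is needed before substituting $u=a^{\alpha}v$, and you do it). For (iii), your route --- the layer-cake identity $Y_{\alpha}(t)=\int_{0}^{\infty}\indI(L_{\alpha}(u)<t)\,du$, vanishing of the joint density off the ordered simplex, factorization over increments, and the identification $\int_{0}^{\infty}f_{L_{\alpha}(v)}(x)\,dv=U'(x)=x^{\alpha-1}/\Gamma(\alpha)$ --- is a direct probabilistic derivation, whereas Veillette--Taqqu obtain the joint-moment formula by Laplace-transform/differential-equation methods; your version is arguably more transparent, at the cost of having to justify differentiating a $k$-fold integral, which you correctly flag as the real technical work. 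Two small suggestions for the write-up: the identity $\int_{0}^{\infty}f_{L_{\alpha}(v)}(x)\,dv=U'(x)$ is cleanest not by differentiating under the integral but by Tonelli on the integrated form, $\int_{0}^{x}\int_{0}^{\infty}f_{L_{\alpha}(v)}(y)\,dv\,dy=\int_{0}^{\infty}\mathrm{P}(L_{\alpha}(v)<x)\,dv=U(x)$, which exhibits the left-hand side as a version of $U'$; and in (A) your negative-moment identity is right and worth displaying, since
\begin{equation*}
\mathrm{E}[L_{\alpha}(1)]^{-p}=\frac{1}{\Gamma(p)}\int_{0}^{\infty}s^{p-1}e^{-s^{\alpha}}\,ds
=\frac{\Gamma(p/\alpha)}{\alpha\,\Gamma(p)}=\frac{\Gamma(1+p/\alpha)}{\Gamma(1+p)},
\end{equation*}
which with $p=\alpha\nu$ gives exactly the stated moment formula. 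Part (B) by substitution of $U(\tau)=\tau^{\alpha}/\Gamma(1+\alpha)$ into the general covariance formula of Subsection~2.1 is verbatim \eqref{COV4}, as you say.
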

\begin{proof}
See \cite{B,VTM1,VTM2}.
\end{proof}


\subsection{Mixture of inverse subordinators}

This subsection collects some results from the theory of inverse subordinators,
see \cite{VTM1,VTM2,MS1,BEG,LMS}.

Different kinds of inverse subordinators can be considered.

Let $L_{\alpha _{1}}$ and $L_{\alpha _{2}}$ be two independent stable
subordinators.
The mixture of them $L_{\alpha _{1},\alpha
_{2}}=\{L_{\alpha _{1},\alpha _{2}}(t),t\geq 0\}$ is defined by its Laplace
transform: for $ s\geq 0,\ C_{1}+C_{2}=1,\ C_{1}\geq 0,\
C_{2}\geq 0,\ \alpha _{1}<\alpha _{2}$,
\begin{equation}
\mathrm{E}e^{-sL_{\alpha _{1},\alpha _{2}}(t)}=\exp \{-t(C_{1}s^{\alpha
_{1}}+C_{2}s^{\alpha _{2}})\}.  \label{M1}
\end{equation}
It is possible to prove that
\begin{equation*}
L_{\alpha _{1},\alpha _{2}}(t)=(C_{1})^{\frac{1}{\alpha _{1}}}L_{\alpha
_{1}}(t)+(C_{2})^{\frac{1}{\alpha _{2}}}L_{\alpha _{2}}(t),\quad t\geq 0,
\end{equation*}
is not self-similar, unless $\alpha _{1}=\alpha _{2}=\alpha ,$ since $L_{\alpha
_{1},\alpha _{2}}(at)=^{d}(C_{1})^{\frac{1}{\alpha _{1}}}a^{\frac{1}{\alpha
_{1}}}L_{\alpha _{1}}(t)+(C_{2})^{\frac{1}{\alpha _{2}}}a^{\frac{1}{\alpha
_{2}}}L_{\alpha _{2}}(t).$ This expression is equal to $a^{\frac{1}{\alpha }%
}L_{\alpha _{1},\alpha _{2}}(t)$  for any $t>0$ if and only if
$\alpha _{1}=\alpha
_{2}=\alpha ,$ in which case the process $L_{\alpha _{1},\alpha _{2}}$ can
be reduced to the classical stable subordinator (up to a constant).

The inverse subordinator is defined by

\begin{equation}
Y_{\alpha _{1},\alpha _{2}}(t)=\inf \{u\geq 0:L_{\alpha _{1},\alpha
_{2}}(u)>t\},\quad t\geq 0.  \label{ISM}
\end{equation}
We assume that $C_{2}\neq 0$ without loss of generality (the case $C_{2}=0$
reduces to the previous case of single inverse subordinator).

It was proved in \cite{LMS} that 
\begin{equation}
\tilde{U}(t)=\frac{1}{(C_{1}s^{\alpha _{1}}+C_{2}s^{\alpha _{2}})s},U(t)=%
\frac{1}{C_{2}}t^{\alpha _{2}}E_{\alpha _{2}-\alpha _{1},\alpha
_{2}+1}(-\tfrac{C_{1}}{C_2}t^{_{\alpha _{2}-\alpha _{1}}}),  \label{UI}
\end{equation}%
where $E_{\alpha ,\beta }(z)$ is the two-parametric Generalized
Mittag-Leffler function (\cite{DAL,HMS}) 
\begin{equation*}
E_{\alpha ,\beta }(z)=\sum_{k=0}^{\infty }\frac{z^{k}}{\Gamma (\alpha
k+\beta )},\ \alpha >0,\beta >0,\ z\in \mathbb{C}.  
\end{equation*}%
Also for the Laplace transform of the density $f_{\alpha _{1},\alpha
_{2}}(t,u)=\frac{d}{du}\mathrm{P}\{Y_{\alpha _{1},\alpha _{2}}(t)\leq u\},$ $%
u\geq 0,$ of the inverse subordinator $Y_{\alpha _{1},\alpha _{2}}=\{Y_{\alpha
_{1},\alpha _{2}}(t),t\geq 0\},$ we have the following expression \cite{MeS}: 
\begin{equation}
\tilde{f}_{\alpha _{1},\alpha _{2}}(s,u)=\int_{0}^{\infty }e^{-st}f_{\alpha
_{1},\alpha _{2}}(t,u)dt=\frac{1}{s}[C_{1}s^{\alpha _{1}}+C_{2}s^{\alpha
_{2}}]e^{-u[C_{1}s^{\alpha _{1}}+C_{2}s^{\alpha _{2}}]},\quad s\geq 0,
\label{M2}
\end{equation}%
and the Laplace transform of $\tilde{f}$ is given by 
\begin{equation}
\int_{0}^{\infty }e^{-pu}\tilde{f}_{\alpha _{1},\alpha _{2}}(s,u)du=\frac{%
\phi (s)}{s(p+\phi (s))}=\frac{C_{1}s^{\alpha _{1}-1}+C_{2}s^{\alpha _{2}-1}%
}{p+C_{1}s^{\alpha _{1}}+C_{2}s^{\alpha _{2}}},\quad p\geq 0.  \label{M3}
\end{equation}

From \cite[Theorem 2.3]{BEG} we have the following expression for $u\geq 0,t> 0$:
\begin{equation*}
f_{\alpha _{1},\alpha _{2}}(t,u)=\frac{C_{1}}{\lambda t^{\alpha _{1}}}%
\sum_{r=0}^{\infty }\frac{1}{r!}(-\frac{C_{2}\left\vert u\right\vert }{%
\lambda t^{\alpha _{2}}})^{r}W_{-\alpha _{1},1-\alpha _{2}r-\alpha _{1}}(-%
\frac{C_{1}\left\vert u\right\vert }{\lambda t^{\alpha _{1}}})+
\end{equation*}%
\begin{equation}
+\frac{C_{2}}{\lambda t^{\alpha _{2}}}\sum_{r=0}^{\infty }\frac{1}{r!}(-%
\frac{C_{1}\left\vert u\right\vert }{\lambda t^{\alpha _{1}}})^{r}W_{-\alpha
_{2},1-\alpha _{1}r-\alpha _{2}}(-\frac{C_{2}\left\vert u\right\vert }{%
\lambda t^{\alpha _{2}}}).  \label{MD2}
\end{equation}%

One can also consider the tempered stable inverse subordinator, the inverse
subordinator to the Poisson process, the compound Poisson process with positive jumps, the Gamma
and the inverse Gaussian L\'{e}vy processes. For additional details see
\cite{LMS,VTM1,VTM2}.

\section{Fractional Poisson Processes and Martingales}\label{sect:3}

\subsection{Preliminaries}

\bigskip The first definition of FPP $N_{\alpha }=\{N_{\alpha }(t),t\geq 0\}$
was given in 
\cite{MGS} (see also \cite{MGV1}) as a renewal
process with Mittag-Leffler waiting times between the events 
\begin{equation*}
N_{\alpha }(t)=\max \left\{ n:T_{1}+...+T_{n}\leq t\right\}=\sum_{j=1}^{\infty }{\indI}_{\{ T_{1}+...+T_{j}\leq t \}}
,\qquad
t\geq 0,  
\end{equation*}
where $\left\{ T_{j}\right\} ,$ $j=1,2,\ldots$ are iid random variables with
the strictly monotone Mittag-Leffler distribution function 
\begin{equation*}
F_{\alpha }(t)=\mathrm{P}\left( T_{j}\leq t\right) =1-E_{\alpha }(-\lambda
t^{\alpha }),\qquad t\geq 0,0<\alpha <1,\quad j=1,2,\ldots
\end{equation*}
The following stochastic representation
for FPP is found in \cite{MNV}:
\begin{equation*}
N_{\alpha }(t)=N(Y_{\alpha }(t)),\quad t\geq 0,\quad \alpha \in (0,1),
\end{equation*}
where $N=\{N(t),t\geq 0\},$ is the classical homogeneous Poisson process
with parameter $\lambda >0,$ which is independent of the inverse stable
subordinator $Y_{\alpha }.$
One can compute the following expression for the one-dimensional
distribution of FPP (see \cite{ScaGroMai04}): 
\begin{align*}
\mathrm{P}\left( N_{\alpha }(t)=k\right)& =p_{k}^{(\alpha
)}(t)=\int_{0}^{\infty }\frac{e^{-\lambda x}(\lambda x)^{k}}{k!}f_{\alpha
}(t,x)dx \\
& =\frac{(\lambda t^{\alpha })^{k}}{k!}\mathop{\displaystyle \sum}%
\limits_{j=1}^{\infty }\frac{(k+j)!}{j!}\frac{(-\lambda t^{\alpha })^{j}}{%
\Gamma (\alpha (j+k)+1)}=\frac{\left( \lambda t^{\alpha }\right) ^{k}}{k!}%
E_{\alpha }^{(k)}(-\lambda t^{\alpha }) \\
&=(\lambda t^{\alpha })^{k}E_{\alpha ,\alpha k+1}^{k+1}(-\lambda t^{\alpha
}),\quad k=0,1,2...,t\geq 0,\quad 0<\alpha <1,
\end{align*}%
where $f_{\alpha}$ is given by (\ref{D1}), 
$E_{\alpha }(z)$ is the Mittag-Leffler function 
(\ref{ML1}),
$E_{\alpha }^{(k)}(z)$ is the $k-$th
derivative of $E_{\alpha }(z)$,
and $E_{\alpha ,\beta }^{\gamma }(z)$ is the three-parametric Generalized Mittag-Leffler function defined as
follows \cite{HMS,P}: 
\begin{equation}
E_{\alpha ,\beta }^{\gamma }(z)=\sum_{j=0}^{\infty }\frac{(\gamma )_{j}z^{j}%
}{j!\Gamma (\alpha j+\beta )},\alpha >0,\beta >0,\ \gamma >0,\quad z\in 
\mathbb{C},  \label{ML3}
\end{equation}
where 
\[
(\gamma )_{j} = 
\begin{cases}
1 & \text{if } j = 0;
\\
\gamma (\gamma+1) \cdots (\gamma+j - 1)
& \text{if } j = 1, 2, \ldots
\end{cases}
\]
is the Pochhammer symbol.

Finally, 
in \cite{BO1,BO2} it is shown that the
marginal distribution of FPP satisfies the following system of fractional
differential-difference equations (see \cite{Las03}):%
\begin{equation*}
\mathrm{D}_{t}^{\alpha }p_{k}^{(\alpha )}(t)=-\lambda (p_{k}^{(\alpha
)}(t)-p_{k-1}^{(\alpha )}(t)),\quad k=0,1,2, \ldots
\end{equation*}%
with initial conditions: $p_{0}^{(\alpha )}(0)=1,p_{k}^{(\alpha
)}(0)=0,k\geq 1,$ and $p_{-1}^{(\alpha )}(t)=0,$ where $\mathrm{D}%
_{t}^{\alpha }$ is the fractional Caputo-Djrbashian derivative (\ref{FD}).
See also \cite{B-T2}.

\begin{remark}
Note that 
\begin{equation*}
\mathrm{E}N_{\alpha }(t)=
\mathrm{E}\big[\mathrm{E}[N(Y_{\alpha }(t)) | Y_{\alpha }(t)]\big]=
\int_{0}^{\infty }[\mathrm{E}N(u)]f_{\alpha
}(t,u)du=\lambda t^{\alpha }/\Gamma (1+\alpha ),  
\end{equation*}%
where $f_{\alpha }(t,u)$ is given by (\ref{D1}), and 
\cite{LMS} showed that 
\begin{equation}\label{Q1:1}
\mathrm{Cov}(N_{\alpha }(t),N_{\alpha }(s)) 
=\frac{\lambda (\min (t,s))^{\alpha }}{\Gamma (1+\alpha )}+\lambda ^{2}%
\mathrm{Cov}(Y_{\alpha }(t),Y_{\alpha }(s)),  
\end{equation}%
where $\mathrm{Cov}(Y_{\alpha }(t),Y_{\alpha }(s))$ is given in \eqref{COV4} while
$\mathrm{Cov}(N(t),N(s))=\lambda \min(t,s)$.
In
particular, 
\begin{equation}\label{eq:VarFPP}
\begin{aligned}
\mathrm{Var}N_{\alpha }(t) &=\lambda ^{2}t^{2\alpha } \Big[\frac{2}{\Gamma
(1+2\alpha )}-\frac{1}{\Gamma ^{2}(1+\alpha )}\Big]+\frac{\lambda t^{\alpha }}{%
\Gamma (1+\alpha )} \\
&=\frac{\lambda ^{2}t^{2\alpha }}{\Gamma ^{2}(1+\alpha )}\Big(\frac{\alpha
\Gamma (\alpha )}{\Gamma (2\alpha )}-1\Big)+\frac{\lambda t^{\alpha }}{\Gamma
(1+\alpha )},\quad t\geq 0.
\end{aligned}%
\end{equation}%
The definition of the Hurst index for renewal processes is
discussed in \cite{DAL}. In the same spirit, one can define the analogous of
the Hurst index for the FPP as 
\begin{equation*}
H=\inf \left\{ \beta :\lim \sup_{T\rightarrow \infty }\frac{\mathrm{Var}%
N_{\alpha }(T)}{{T^{2\beta}}} <\infty\right\} \in(0,1) .
\end{equation*}%
\end{remark}
To prove the formula \eqref{Q1:1}, one can use the  conditional covariance formula \cite[Exercise~7.20.b]{Ross_02}:
\[
\mathrm{Cov}(Z_1,Z_2) = \mathrm{E} \big( \mathrm{Cov}(Z_1,Z_2 | Y) \big) +
\mathrm{Cov} \big( \mathrm{E} (Z_1 | Y) ,\mathrm{E} (Z_2 | Y) \big),
\]
where $Z_1,Z_2$ and $Y$ are random variables, and
\[
\mathrm{Cov}(Z_1,Z_2 | Y)  = \mathrm{E} \big( (Z_1-\mathrm{E} (Z_1 | Y))
(Z_2-\mathrm{E} (Z_2 | Y)) \big) .
\]
Really, if
\[
G_{t,s} (u,v) = \mathrm{P}\{ Y_\alpha (t) \leq u, Y_\alpha (s) \leq v \},
\]
then $\mathrm{E} (N(Y_\alpha(t))|Y_\alpha(t)) = \mathrm{E} (N(1)) \cdot Y_\alpha(t) = \lambda Y_\alpha(t) $, and
\begin{multline*}
\mathrm{Cov}(Y_\alpha (t) , Y_\alpha (s) ) = \mathrm{Var} \Big(N(1) \int_0^\infty\int_0^\infty
\min(u,v)  G_{t,s} (du,dv) \Big) + \mathrm{Cov}\big( \lambda Y_\alpha (t) , \lambda Y_\alpha (s) \big)
\\
= \lambda \mathrm{E} ( Y_\alpha (\min(t,s)) ) +  \lambda^2 \mathrm{Cov}( Y_\alpha (t) , Y_\alpha (s) ),
\end{multline*}
since, for example, if $s\leq t$, then $v=Y_\alpha (s) \leq Y_\alpha (t)=u $, and
\[
\int_0^\infty\int_0^\infty v  G_{t,s} (du,dv) =
\int_0^\infty v \int_0^\infty  G_{t,s} (du,dv) =
\int_0^\infty v \, d\mathrm{P}\{ Y_\alpha (s) \leq v \} = \mathrm{E} ( Y_\alpha (s) ).
\]
\begin{remark}
For more than one random variable in the condition, the conditional covariance formula becomes more complicated,
it can be seen even for the conditional variance formula:
\[
\mathrm{Var}(Z ) =    \mathrm{E} \big( \mathrm{Var}(Z | Y_1,Y_2) \big) +
\mathrm{E} \big( \mathrm{Var} [ \mathrm{E} (Z | Y_1,Y_2) ] | Y_1 \big) +
\mathrm{Var}\big( \mathrm{E} (Z | Y_1) \big).
\]
The corresponding formulas can be found in \cite{BroSwa}. That is why for random fields we develop another technique,
see Appendix.
\end{remark}

\subsection{Watanabe characterization}

Let $\left( \Omega ,\mathcal{F},\mathrm{P}\right) $ be a complete probability
space. Recall that the $\mathcal{F}_{t}-$adapted,
$\mathrm{P}$-integrable stochastic process $%
M=\{M(t),t\geq 0\}$ is an $\mathcal{F}_{t}-$martingale (sub-martingale) if $%
\mathrm{E}(M(t)|\mathcal{F}_{s})=(\geq )M(s),$ $0\leq s\leq t,$ a.s., where $%
\{\mathcal{F}_{t}\}$ is a non-decreasing family of sub-sigma fields of $%
\mathcal{F}$. A point process $N$ is called simple if its jumps are of magnitude $+1$.
It is locally finite when it does not have infinite jumps in a bounded region.
The following theorem is known as the Watanabe
characterization for homogeneous Poisson processes (see, \cite{WAT} 
and \cite[p.~25]{BR}):

\begin{theorem}
Let $N=\{N(t),t\geq 0\}$ be a $\mathcal{F}_{t}-$adapted,
simple locally finite point process. Then $N$%
~is a homogeneous Poisson process iff there is a constant $\lambda >0$, such
that the process $M(t)=N(t)-\lambda t$ is an $\mathcal{F}_{t}-$martingale.
\end{theorem}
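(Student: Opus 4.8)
The plan is to prove both implications, with the forward direction being essentially immediate and the converse requiring the construction of a suitable exponential martingale. For the ``only if'' part, suppose $N$ is a homogeneous Poisson process of rate $\lambda$. Its increments are stationary and independent of the past, so for $0\leq s\leq t$ I would compute
\[
\mathrm{E}(M(t)\mid \mathcal{F}_s)=\mathrm{E}(N(t)-N(s)\mid \mathcal{F}_s)+N(s)-\lambda t=\lambda(t-s)+N(s)-\lambda t=M(s),
\]
using $\mathrm{E}(N(t)-N(s))=\lambda(t-s)$; this shows $M$ is an $\mathcal{F}_t$-martingale.

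For the substantive ``if'' direction, the strategy is to show that the martingale hypothesis forces the conditional characteristic function of the increments to be deterministic and of Poisson type. Fix $u\in\mathbb{R}$ and introduce the candidate exponential martingale
\[
Z_u(t)=\exp\bigl(iuN(t)-\lambda t(e^{iu}-1)\bigr).
\]
Because $N$ is simple and locally finite, on every bounded interval it is a pure-jump process of finite variation with unit jumps, so the pathwise change-of-variables formula gives $d(e^{iuN(t)})=e^{iuN(t-)}(e^{iu}-1)\,dN(t)$. Substituting $dN(t)=dM(t)+\lambda\,dt$ and applying integration by parts to the two factors of $Z_u$ (the second factor being continuous of bounded variation, so there is no covariation term), I expect the two drift contributions of order $dt$ to cancel exactly, leaving
\[
dZ_u(t)=(e^{iu}-1)\,e^{-\lambda t(e^{iu}-1)}\,e^{iuN(t-)}\,dM(t),
\]
which exhibits $Z_u$ as a stochastic integral against the martingale $M$ with a bounded predictable integrand, hence a martingale itself.

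The martingale identity $\mathrm{E}(Z_u(t)\mid \mathcal{F}_s)=Z_u(s)$ then rearranges into
\[
\mathrm{E}\bigl(e^{iu(N(t)-N(s))}\mid \mathcal{F}_s\bigr)=\exp\bigl(\lambda(t-s)(e^{iu}-1)\bigr).
\]
The right-hand side is non-random, which forces $N(t)-N(s)$ to be independent of $\mathcal{F}_s$, and it is precisely the characteristic function of a Poisson law with mean $\lambda(t-s)$. Iterating this over a partition $0=t_0<\cdots<t_n$, where each increment is independent of the sigma-field generated by the earlier ones, yields independent, stationary, Poisson-distributed increments, so $N$ is a homogeneous Poisson process.

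The main obstacle I anticipate is the rigorous justification that $Z_u$ is a genuine martingale rather than merely a local martingale, together with the validity of the pathwise change-of-variables step. The former follows because the integrand $(e^{iu}-1)e^{-\lambda t(e^{iu}-1)}e^{iuN(t-)}$ is uniformly bounded on $[0,t]$, so the stochastic integral against $M$ is integrable; the latter rests on the simplicity and local finiteness of $N$, which guarantee only finitely many unit jumps on bounded intervals and hence that the reading of $\lambda t$ as the compensator of $N$ is exactly what the martingale hypothesis on $M$ supplies. Alternatively, one may invoke the general point-process martingale machinery of \cite{BR} to package these technical steps.
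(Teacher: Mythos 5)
Your proof is correct, but there is nothing in the paper to compare it against: the paper states this result as the known Watanabe characterization and refers for its proof to the cited literature (Watanabe's 1964 paper and Br\'emaud's book, p.~25); no proof is given in the paper itself. Your argument is, in substance, exactly the classical one from those references: the forward direction by independence of increments, and the converse via the complex exponential martingale $Z_u(t)=\exp\bigl(iuN(t)-\lambda t(e^{iu}-1)\bigr)$, the pathwise calculus for the finite-variation pure-jump process $N$, cancellation of the drift through $dN(t)=dM(t)+\lambda\,dt$, and the observation that a deterministic conditional characteristic function simultaneously identifies the Poisson law of the increment and its independence of $\mathcal{F}_s$. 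Two points you should make explicit to close the argument. First, in the forward direction you need $N$ to be a Poisson process \emph{with respect to} $\{\mathcal{F}_t\}$, i.e.\ increments independent of $\mathcal{F}_s$; this is the standing convention in the theorem, since for a Poisson process that is merely adapted to a larger filtration the martingale property can fail. Second, the step ``a stochastic integral of a bounded predictable integrand against $M$ is a true martingale'' requires $M$ to have integrable variation on bounded intervals; this is available because the martingale hypothesis makes $N(t)$ integrable and the total variation of $M$ on $[0,t]$ is at most $N(t)+\lambda t$, after which the standard point-process integration theorem applies (the integrand is predictable thanks to the left limit $N(t-)$ and bounded on compacts). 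Together with the usual remark that the identity $\mathrm{E}\bigl(e^{iu(N(t)-N(s))}\mid\mathcal{F}_s\bigr)=\exp\bigl(\lambda(t-s)(e^{iu}-1)\bigr)$ can be arranged to hold almost surely simultaneously for all $u$ (rational $u$ plus continuity in $u$), which is what licenses the independence conclusion, your proof is complete and faithfully reconstructs the argument the paper delegates to its references.
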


We extend the well-known Watanabe characterization for FPP.
The following result may be seen as a corollary of the 
Watanabe characterization for Cox processes as 
in \cite[Chapetr II]{BR}. We will make use of the following lemma.
\begin{lemma}[Doob's Optional Sampling Theorem]\label{lem:DOST}
Let $M$ be a right-continuous martingale.
Then, if $T$ and $S$ are stopping times 
such that $P(T<+\infty)=1$ and $\{M(t\wedge T),t\geq 0\}$ is uniformly integrable,
then $E(M(T)|\mathcal{F}_{S\wedge T})= M(S\wedge T)$.
\end{lemma}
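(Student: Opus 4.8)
The plan is to reduce the statement to the fundamental optional sampling identity for a \emph{uniformly integrable martingale closed by a terminal variable}, and then to establish that identity by approximating the stopping time from above by dyadic-valued stopping times.

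First I would introduce the stopped process $N(t):=M(t\wedge T)$. Since $M$ is a right-continuous martingale, $N$ is again a right-continuous martingale (the elementary half of optional stopping, for the process stopped at $T$), and by hypothesis the family $\{N(t),\,t\geq 0\}$ is uniformly integrable. A uniformly integrable martingale converges almost surely and in $L^{1}$ to a terminal variable $N_{\infty}$; because $\mathrm{P}(T<\infty)=1$ and $M$ is right-continuous, on $\{T<\infty\}$ one has $N(t)=M(T)$ for all $t\geq T$, so $N_{\infty}=M(T)$ a.s. Thus $N$ is closed by $M(T)$, i.e.\ $N(t)=\mathrm{E}(M(T)\mid\mathcal{F}_{t})$. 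Setting $R:=S\wedge T$ (which is finite a.s.\ since $R\leq T$), the claim reduces to showing $\mathrm{E}(N_{\infty}\mid\mathcal{F}_{R})=N(R)$, because $N(R)=M(R\wedge T)=M(S\wedge T)$ and $N_{\infty}=M(T)$.

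To prove $\mathrm{E}(N_{\infty}\mid\mathcal{F}_{R})=N(R)$, I would approximate $R$ from above by the dyadic stopping times $R_{n}:=\lceil 2^{n}R\rceil/2^{n}$, which decrease to $R$ and take at most countably many values. For each fixed $n$ the sampling identity $\mathrm{E}(N_{\infty}\mid\mathcal{F}_{R_{n}})=N(R_{n})$ can be checked directly: for $A\in\mathcal{F}_{R_{n}}$ one splits over the values $\{R_{n}=k/2^{n}\}\in\mathcal{F}_{k/2^{n}}$ and uses $\mathrm{E}(N_{\infty}\mid\mathcal{F}_{k/2^{n}})=N(k/2^{n})$ together with the measurability of $A\cap\{R_{n}=k/2^{n}\}$, summing to get $\mathrm{E}(N_{\infty}\indI_{A})=\mathrm{E}(N(R_{n})\indI_{A})$. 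Finally I would pass to the limit $n\to\infty$: by right-continuity of the paths $N(R_{n})\to N(R)$ a.s., while $N(R_{n})=\mathrm{E}(N_{\infty}\mid\mathcal{F}_{R_{n}})$ is a uniformly integrable family (conditional expectations of the single integrable variable $N_{\infty}$), so the convergence also holds in $L^{1}$; and by L\'evy's downward convergence theorem $\mathrm{E}(N_{\infty}\mid\mathcal{F}_{R_{n}})\to\mathrm{E}(N_{\infty}\mid\bigcap_{n}\mathcal{F}_{R_{n}})$. Identifying $\bigcap_{n}\mathcal{F}_{R_{n}}=\mathcal{F}_{R}$ (which uses right-continuity of the filtration) yields $N(R)=\mathrm{E}(N_{\infty}\mid\mathcal{F}_{R})$, as desired.

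The main obstacle I expect is the limit interchange in the last step: one must simultaneously control $N(R_{n})\to N(R)$ in $L^{1}$ and $\mathrm{E}(N_{\infty}\mid\mathcal{F}_{R_{n}})\to\mathrm{E}(N_{\infty}\mid\mathcal{F}_{R})$, and correctly identify the limiting $\sigma$-field $\bigcap_{n}\mathcal{F}_{R_{n}}$ with $\mathcal{F}_{R}$. The uniform integrability hypothesis is precisely what converts the almost-sure path convergence into the $L^{1}$ convergence needed here, and the identification of the $\sigma$-fields relies on the filtration satisfying the usual right-continuity condition.
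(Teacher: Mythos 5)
Your opening reduction coincides exactly with the paper's: both you and the authors define $N(t)=M(t\wedge T)$, observe that it is a right-continuous uniformly integrable martingale closed at infinity by $N_\infty=M(T)$, and reduce the claim to the optional sampling identity for this closed martingale. The difference is in how that identity is obtained. The paper simply cites Kallenberg (\cite[Theorem~7.29]{KALL1}, applied with $X=N$, $\tau\equiv+\infty$, $\sigma=S$), whereas you prove it from first principles: dyadic stopping times $R_n\downarrow R:=S\wedge T$, the elementary sampling identity $\mathrm{E}(N_\infty\mid\mathcal{F}_{R_n})=N(R_n)$ for countably-valued stopping times, and a passage to the limit combining path right-continuity, uniform integrability of the conditional expectations, and backward (L\'evy downward) martingale convergence. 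This is essentially the textbook proof of the theorem the paper invokes, so your route is correct; it buys self-containedness at the cost of length, while the paper buys brevity by citation.

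One blemish is worth fixing. The lemma assumes only that $M$ is a right-continuous martingale --- nothing about the filtration --- and in the paper's application (Theorem~\ref{thm:Teo2}) the filtration $\mathcal{F}_t=\sigma(X(s),s\leq t)\vee\sigma(Y_{\alpha}(s),s\geq 0)$ is not claimed to satisfy the usual conditions; yet your final step identifies $\bigcap_n\mathcal{F}_{R_n}$ with $\mathcal{F}_R$ precisely by appealing to right-continuity of the filtration, so as written you prove a slightly different statement. The dependence can be removed: backward convergence gives $N(R)=\mathrm{E}(N_\infty\mid\mathcal{G})$ with $\mathcal{G}:=\bigcap_n\mathcal{F}_{R_n}\supseteq\mathcal{F}_R$ (no identification of $\mathcal{G}$ needed); since $N$ is right-continuous and adapted, it is progressively measurable, so $N(R)$ is $\mathcal{F}_R$-measurable, and the tower property then yields $\mathrm{E}(N_\infty\mid\mathcal{F}_R)=\mathrm{E}\bigl(\mathrm{E}(N_\infty\mid\mathcal{G})\mid\mathcal{F}_R\bigr)=\mathrm{E}(N(R)\mid\mathcal{F}_R)=N(R)$. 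With that adjustment your argument proves the lemma exactly as stated.
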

\begin{proof}
Define $N= \{N(t)=M(t\wedge T),t\geq 0\}$. 
Then $N$ is a right-continuous uniformly integrable martingale such that $\lim_{t\to+\infty}N(t)= M(T)$.
Moreover, $N(S) = M(T\wedge S)$. The thesis is hence a consequence of the 
Doob's Optional Sampling Theorem 
 (see, e.g., \cite[Theorem~7.29]{KALL1} with  $X = N$, $\tau \equiv +\infty$ and $\sigma = S$).
\end{proof}

\begin{theorem}\label{thm:Teo2}
Let $X=\{X(t),\,t\geq 0\}$ be a simple locally finite point process. Then $%
X$ is a FPP iff there exist a constant $\lambda >0,$ and an $\alpha $-stable
subordinator $L_{\alpha }=\{L_{\alpha }(t),\,t\geq 0\},$ $0<\alpha <1,$ such
that, denoted by 
$Y_{\alpha }(t)=\inf \{s:L_{\alpha }(s)\geq t\}$ its inverse stable subordinator,
the process
\begin{equation*}
M=\{M(t),\,t\geq 0\}=\{X(t)-\lambda Y_{\alpha }(t),\,t\geq 0\}
\end{equation*}
is a right-continuous martingale with respect to the induced filtration $\mathcal{F}_t=
\sigma(X(s),s\leq t) \vee \sigma(Y_{\alpha }(s),s \geq 0)$ such that, for any $T>0$,
\begin{equation}\label{eq:unBound}
\{M(\tau), \tau\text{ stopping time s.t.\ }Y_{\alpha }(\tau)\leq T\}
\end{equation}
is uniformly integrable. 
\end{theorem}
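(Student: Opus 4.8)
\emph{Plan.} The representation $N_\alpha(t)=N(Y_\alpha(t))$ from \cite{MNV}, with $N$ a rate-$\lambda$ Poisson process independent of $Y_\alpha$, suggests proving both implications by transferring the classical Watanabe characterization between the clock $t$ (on which $X$ lives) and the ``Poisson clock'' $u$ (on which $N=X\circ L_\alpha$ should live) via a time change. The observation used throughout is that the filtration already contains the whole path of $Y_\alpha$ at time $0$: since $Y_\alpha$ and $L_\alpha$ determine each other, $\sigma(Y_\alpha)=\sigma(L_\alpha)\subseteq\mathcal F_0$, so every $Y_\alpha(t)$ and every $L_\alpha(u)$ is $\mathcal F_0$-measurable, hence an $\mathcal F$-stopping time, and likewise a stopping time for the enlarged Poisson filtration used below.

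\emph{Necessity (FPP $\Rightarrow$ martingale).} Write $X=N\circ Y_\alpha$ and set $\tilde M(u)=N(u)-\lambda u$, so that $M(t)=\tilde M(Y_\alpha(t))$. By the classical Watanabe theorem \cite{WAT,BR}, $\tilde M$ is a martingale for $\sigma(N(r),r\le u)$, and since $N\ci Y_\alpha$ it stays a martingale for the independently enlarged filtration $\mathcal G_u=\sigma(N(r),r\le u)\vee\sigma(Y_\alpha)$, in which each $Y_\alpha(s)$ is a stopping time bounded by $Y_\alpha(t)$. For $s\le t$ I would apply Lemma~\ref{lem:DOST} to $\tilde M$ with $T=Y_\alpha(t)$ and $S=Y_\alpha(s)$; the uniform integrability of $\{\tilde M(u\wedge Y_\alpha(t))\}$ holds because $|\tilde M(\sigma)|\le N(Y_\alpha(t))+\lambda Y_\alpha(t)=X(t)+\lambda Y_\alpha(t)\in L^1$ for every $\sigma\le Y_\alpha(t)$. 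This gives $\mathrm E(M(t)\mid\mathcal G_{Y_\alpha(s)})=M(s)$, and since $\mathcal F_s\subseteq\mathcal G_{Y_\alpha(s)}$ (because $X(r)=N(Y_\alpha(r))$ with $Y_\alpha(r)\le Y_\alpha(s)$ for $r\le s$) while $M(s)$ is $\mathcal F_s$-measurable, a final projection yields $\mathrm E(M(t)\mid\mathcal F_s)=M(s)$. The same domination gives the uniform integrability \eqref{eq:unBound}.

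\emph{Sufficiency (martingale $\Rightarrow$ FPP).} Now I run the time change in the opposite direction. First I would show that $X$ has no jump inside a flat stretch of $Y_\alpha$ (a gap of the range of $L_\alpha$): for $s\le t$ the event $\{Y_\alpha(t)=Y_\alpha(s)\}$ lies in $\mathcal F_s$, on it $M(t)-M(s)=X(t)-X(s)\ge0$, and the martingale property forces $\mathrm E[(X(t)-X(s))\indI_{\{Y_\alpha(t)=Y_\alpha(s)\}}]=0$; a non-negative integrand of zero mean vanishes, so $X(t)=X(s)$ whenever $Y_\alpha(t)=Y_\alpha(s)$, and right-continuity makes $X$ constant on each flat interval of $Y_\alpha$. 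Consequently $\hat N(u):=X(L_\alpha(u))$ is a simple, locally finite point process adapted to the time-changed filtration $\hat{\mathcal F}_u:=\mathcal F_{L_\alpha(u)}$, its unit jumps being exactly those of $X$, now supported on the range of $L_\alpha$. Applying Lemma~\ref{lem:DOST} to $M$ with $T=L_\alpha(u)$ and $S=L_\alpha(u')$ for $u'\le u$ --- the required uniform integrability being precisely \eqref{eq:unBound} with bound $u$, since $\tau\le L_\alpha(u)$ forces $Y_\alpha(\tau)\le u$ --- and using $Y_\alpha(L_\alpha(u))=u$, I obtain that $\hat N(u)-\lambda u=M(L_\alpha(u))$ is an $\hat{\mathcal F}$-martingale. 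The classical Watanabe characterization then identifies $\hat N$ as a rate-$\lambda$ homogeneous Poisson process whose increments are independent of $\hat{\mathcal F}_0=\mathcal F_0\supseteq\sigma(L_\alpha)$, i.e.\ $\hat N\ci Y_\alpha$. Finally, since $t$ and $L_\alpha(Y_\alpha(t))$ lie in the same flat interval of $Y_\alpha$, the first step gives $X(t)=X(L_\alpha(Y_\alpha(t)))=\hat N(Y_\alpha(t))$, which is exactly the representation $N_\alpha=N\circ Y_\alpha$ defining a FPP.

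\emph{Main obstacle.} The delicate point is the sufficiency direction, where one must simultaneously (i) justify that composing with $L_\alpha$ really produces a martingale on the new clock --- this is where the stopping-time measurability $\sigma(L_\alpha)\subseteq\mathcal F_0$ and the uniform integrability hypothesis \eqref{eq:unBound} are indispensable, as otherwise optional sampling would be unavailable --- and (ii) rule out jumps of $X$ on the gaps of the range of $L_\alpha$, without which $\hat N$ would fail to be a unit-jump process and the identity $X=\hat N\circ Y_\alpha$ would break down. It is the monotonicity of $X$ pitted against the martingale constraint that makes (ii) work, and keeping the three filtrations $\mathcal F$, $\mathcal G$ and $\hat{\mathcal F}$ and their inclusions straight is the main bookkeeping burden.
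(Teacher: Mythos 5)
Your proof is correct, and its skeleton is the same as the paper's: both directions rest on optional sampling (Lemma~\ref{lem:DOST}) applied along the $\mathcal F_0$-measurable random times supplied by $Y_\alpha$ or its inverse, followed by the classical Watanabe theorem on the transformed clock. The differences in execution are worth recording, because they are improvements. In the necessity direction, the paper gets uniform integrability from $L^2$-boundedness (via \eqref{eq:VarFPP} and Proposition~\ref{prop:2.1}); you instead use the domination $|\tilde M(u\wedge Y_\alpha(t))|\le N(Y_\alpha(t))+\lambda Y_\alpha(t)$ and $|M(\tau)|\le N(T)+\lambda T$, which is simpler and equally valid. In the sufficiency direction, the paper sets $Z(t)=\inf\{s:Y_\alpha(s)\ge t\}$ (the left-continuous counterpart of your $L_\alpha$), shows $X(Z(t))-\lambda t$ is a martingale, invokes Watanabe to produce a Poisson process $N=X\circ Z$, and then simply writes $X(t)=N(Y_\alpha(t))$. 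That final identity reads $X(t)=X(Z(Y_\alpha(t)))$ with $Z(Y_\alpha(t))\le t$, so it tacitly requires exactly what your first step proves: that $X$ cannot jump where $Y_\alpha$ is flat. Your argument --- $X(t)-X(s)\ge 0$ on $\{Y_\alpha(t)=Y_\alpha(s)\}\in\mathcal F_s$ and of zero conditional mean, hence zero --- supplies this missing ingredient; you also make explicit the independence of the recovered Poisson process from $\sigma(Y_\alpha)$ (via $\hat{\mathcal F}_0\supseteq\sigma(Y_\alpha)$), which the definition of FPP demands and the paper leaves implicit. In these respects your write-up is more complete than the published proof.

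One residual fine point, shared by your proof and the paper's: the deterministic-pair argument plus right-continuity yields constancy of $X$ on each flat stretch $[L_\alpha(u-),L_\alpha(u))$, but not at the closed right endpoint, so a jump of $X$ exactly at $L_\alpha(u)$ is not yet excluded; such a jump would spoil both the simplicity of $\hat N$ and the identity $X=\hat N\circ Y_\alpha$. The patch uses the idea you already have: $L_\alpha(u)$ is $\mathcal F_0$-measurable, hence a stopping time announced by the stopping times $(L_\alpha(u)-1/n)^+$, and since any stopping time $\tau\le L_\alpha(u)$ satisfies $Y_\alpha(\tau)\le u$, the hypothesis \eqref{eq:unBound} lets you apply Lemma~\ref{lem:DOST} between $(L_\alpha(u)-1/n)^+$ and $L_\alpha(u)$; continuity of $Y_\alpha$ then gives $\mathrm E\bigl[X(L_\alpha(u))-X((L_\alpha(u)-1/n)^+)\bigr]\to 0$, while this quantity dominates $\mathrm E\bigl[\Delta X(L_\alpha(u))\bigr]\ge 0$, so the jump vanishes almost surely; running over a countable exhaustion of the jump times of $L_\alpha$ finishes it. A line to this effect would make your sufficiency argument fully airtight.
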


\begin{proof}
If $X$ is a FPP, then $X(t)=N(Y_{\alpha }(t)),$ where $Y_{\alpha }$ is the
inverse of an $\alpha $-stable subordinator and $N$ is a Poisson process
with intensity $\lambda >0.$

Note that $X\geq 0$ and $(Y_{\alpha }\geq 0$ are monotone non-decreasing, and hence the boundenesses in $L^2$ given by \eqref{eq:VarFPP}
and Proposition~\ref{prop:2.1} iiiA) 
imply that $\{N(Y_{\alpha }(t))-\lambda Y_{\alpha }(t), 0\leq t \leq T \}$
is uniformly integrable (see, for example, \cite[pag.~67]{KALL1}).
Therefore $N(Y_{\alpha }(t))-\lambda Y_{\alpha }(t)$ is still a martingale, by Lemma~\ref{lem:DOST}. Notice
that $Y_{\alpha }(t)$ is continuous increasing and adapted; therefore it is
the predictable intensity of the sub-martingale $X.$ 

Now, let $\tau$ be a stopping time s.t.\ $Y_{\alpha }(\tau)\leq T$, and hence $\lambda Y_{\alpha }(\tau)\leq \lambda T$. 
Then, since $N$ is a Poisson process
with intensity $\lambda >0$,
$\tilde{M}(t) = M(\tau\wedge t)$ is a martingale bounded in $L^2$ and null at $0$, and therefore
it converges in $L^2$ to $M(\tau)$, with variance bounded by 
\[
E(M^2(\tau)) = \lim_{t\to\infty} E(M^2(\tau\wedge t)) \leq \mathrm{Var}(N(T)) + 
\mathrm{Var}(Y_{\alpha }(\tau)) \leq 
\text{const} \cdot (1+T^2).
\]
Then the family \eqref{eq:unBound} is uniformly bounded in $L^2$, which implies the thesis.

Conversely, it is enough to prove that $X(t)=N(Y_{\alpha }(t)),$ where $N$
is a Poisson process, independent of $Y_{\alpha }.$

Consider the inverse of $Y_{\alpha }(t):$ 
\begin{equation*}
Z(t)=\inf \{s:\,Y_{\alpha }(s)\geq t\}.
\end{equation*}%

$\{Z(t),\,t\geq 0\}$ can be seen as a family of stopping times. Then, by Lemma~\ref{lem:DOST},
\begin{equation*}
M(Z(t))=X(Z(t))-\lambda Y_{\alpha }(Z(t))
\end{equation*}%
is still a martingale. The fact that $Y_{\alpha }$ is continuous implies that $Y_{\alpha }(Z(t))=t$,
and hence $X(Z(t))-\lambda t$ is a martingale. Moreover, since $Z(t)$
is increasing, $X(Z(t))$ is a simple point process.

Following the classical Watanabe characterization, $X(Z(t))$ is a classical
Poisson process with parameter $\lambda >0$. Call this process $%
N(t)=X(Z(t)). $ Then $X(t)=N(Y_{\alpha }(t))$ is a FPP.
\end{proof}

For recent developments and random change time results, 
see also \cite{Magdz10,Nane17}. 
In particular, we thank a referee to have outlined that a similar result 
has been obtained in \cite[Lemma 3.2]{Nane17}.

\section{Mixed-Fractional Poisson Processes}\label{sect:6}

\subsection{Definition}

In this section, we consider a more general Mixed-Fractional Poisson process
(MFPP) 
\begin{equation}
N^{\alpha _{1},\alpha _{2}}=\{N^{\alpha _{1},\alpha _{2}}(t),t\geq
0\}=\{N(Y_{\alpha _{1},\alpha _{2}}(t)),t\geq 0\},  \label{SR}
\end{equation}%
where the homogeneous Poisson process $\ N$ with intensity $\lambda >0,$ and
the inverse subordinator $Y_{\alpha _{1},\alpha _{2}}$ given by (\ref{ISM})
are independent. We will show that $N^{\alpha _{1},\alpha _{2}}$ is the
stochastic solution of the system of fractional differential-difference
equations: for $k=0,1,2,\ldots$,

\begin{equation}
C_{1}\mathrm{D}_{t}^{\alpha _{1}}p_{k}^{(\alpha _{1},\alpha _{2})}(t)+C_{2}%
\mathrm{D}_{t}^{\alpha _{2}}p_{k}^{(\alpha _{1},\alpha _{2})}(t)=-\lambda
(p_{k}^{(\alpha _{1},\alpha _{2})}(t)-p_{k-1}^{(\alpha _{1},\alpha
_{2})}(t)),  \label{ME1}
\end{equation}%
with initial conditions: 
\begin{equation}
p_{0}^{(\alpha _{1},\alpha _{2})}(0)=1,p_{k}^{(\alpha _{1},\alpha
_{2})}(0)=0,p_{-1}^{(\alpha _{1},\alpha _{2})}(t)=0,\quad k\geq 1,
\label{ME2}
\end{equation}%
where $\mathrm{D}_{t}^{\alpha }$ is the fractional Caputo-Djrbashian
derivative (\ref{FD}), and for $C_{1}\geq 0,C_{2}>0,C_{1}+C_{2}=1$, $\alpha
_{1},\alpha _{2}\in (0,1)$,
\begin{equation*}
p_{k}^{(\alpha _{1},\alpha _{2})}(t)=\mathrm{P}\{N^{\alpha _{1},\alpha
_{2}}(t)=k\},\qquad k=0,1,2\ldots
\end{equation*}

\subsection{Distribution Properties}

Using the formulae for Laplace transform of the fractional Caputo-Djrbashian
derivative (see, \cite[p.39]{MS1}):%
\begin{equation*}
\int_{0}^{\infty }e^{-st}\mathrm{D}_{t}^{\alpha }u(t)dt=s^{\alpha }
u(0^+)
-s^{\alpha -1}u(0),0<\alpha <1,
\end{equation*}%
one can obtain from (\ref{ME1}) with $k=0$ the following equation 
\begin{equation*}
C_{1}s^{\alpha _{1}}\tilde{p}_{0}(s)-C_{1}s^{\alpha _{1}-1}+C_{2}s^{\alpha 2}%
\tilde{p}_{0}(s)-C_{2}s^{\alpha _{2}-1}=-\lambda \tilde{p}_{0}(s),\tilde{p}%
_{0}(0)=1,
\end{equation*}%
for the Laplace transform 
\begin{equation*}
\tilde{p}_{0}^{(\alpha _{1},\alpha _{2})}(s)=\tilde{p}_{0}(s)=\int_{0}^{%
\infty }e^{-st}p_{0}^{(\alpha _{1},\alpha _{2})}(t)dt,\quad s\geq 0.
\end{equation*}

Thus 
\begin{equation*}
\tilde{p}_{0}(s)=\frac{C_{1}s^{\alpha _{1}-1}+C_{2}s^{\alpha _{2}-1}}{%
\lambda +C_{1}s^{\alpha _{1}}+C_{2}s^{\alpha _{2}}},\quad s\geq 0,
\end{equation*}%
and using the formula for an inverse Laplace transform (see, \cite{HMS}), for
$\Re \alpha > 0,\Re \beta > 0, \Re s> 0, \Re (\alpha-\rho) > 0,\Re (\alpha-\beta) > 0,$ and $|as^{\beta}/(s^{\alpha}+b)|<1$:
\begin{equation}\label{eq:InvLap}
\mathcal{L}^{-1}\Big(\frac{ s^{\rho-1}}{ s^{\alpha}+a s^{\beta}+b};t\Big)
= t^{\alpha-\rho} \sum_{r=0}^\infty (-a)^r t^{(\alpha-\beta)r} E_{\alpha,\alpha+(\alpha-\beta)r-\rho+1}^{r+1} (-b t^{\alpha}), 
\end{equation}
one can find an exact form of the $p_{0}^{(\alpha _{1},\alpha _{2})}(t)$ in
terms of generalized Mittag-Leffler functions (\ref{ML3}): 
\begin{align}
p_{0}^{(\alpha _{1},\alpha _{2})}(t)& =\mathop{\displaystyle \sum}%
\limits_{r=0}^{\infty }\left( -\frac{C_{1}}{C_{2}}t^{\alpha _{2}-\alpha
_{1}}\right) ^{r}E_{\alpha _{2},(\alpha _{2}-\alpha _{1})r+1}^{r+1}\left( -%
\frac{\lambda }{C_{2}}t^{\alpha _{2}}\right)  \label{F} \\
& \quad -\mathop{\displaystyle \sum}\limits_{r=0}^{\infty }\left( -\frac{%
C_{1}}{C_{2}}t^{\alpha _{2}-\alpha _{1}}\right) ^{r+1}E_{\alpha _{2},(\alpha
_{2}-\alpha _{1})(r+1)+1}^{r+1}\left( -\frac{\lambda }{C_{2}}t^{\alpha
_{2}}\right) .  \notag
\end{align}%
For $k\geq 1,$we obtain from (\ref{ME1}): 
\begin{equation*}
\tilde{p}_{k}(s)(\lambda +C_{1}s^{\alpha _{1}}+C_{2}s^{\alpha _{2}})=\lambda 
\tilde{p}_{k-1}(s),
\end{equation*}%
where 
\begin{equation*}
\tilde{p}_{k}^{(\alpha _{1},\alpha _{2})}(s)=\tilde{p}_{k}(s)=\int_{0}^{%
\infty }e^{-st}p_{k}^{(\alpha _{1},\alpha _{2})}(t)dt,\quad s\geq 0.
\end{equation*}%
Thus from (\ref{ME1}) we obtain the following expression for the Laplace
transform of $p_{k}^{(\alpha _{1},\alpha _{2})}(t),$ $k\geq 0:$ 
\begin{align}
\tilde{p}_{k}(s)& =\left( \frac{\lambda }{\lambda +C_{1}s^{\alpha
_{1}}+C_{2}s^{\alpha _{2}}}\right) \tilde{p}_{k-1}(s)=\left( \frac{\lambda }{%
\lambda +C_{1}s^{\alpha _{1}}+C_{2}s^{\alpha _{2}}}\right) ^{k}\tilde{p}%
_{0}(s)  \label{FOR1} \\
& =\frac{\lambda ^{k}(C_{1}s^{\alpha _{1}-1}+C_{2}s^{\alpha _{2}-1})}{%
(\lambda +C_{1}s^{\alpha _{1}}+C_{2}s^{\alpha _{2}})^{k+1}}=\frac{\lambda
^{k}(C_{1}s^{\alpha _{1}}+C_{2}s^{\alpha _{2}})}{s(\lambda +C_{1}s^{\alpha
_{1}}+C_{2}s^{\alpha _{2}})^{k+1}},\ k=0,1,2...  \notag
\end{align}

\bigskip On the other hand, one can compute the Laplace transform from the
stochastic representation (\ref{SR}). If 
\begin{equation}
p_{k}^{(\alpha _{1},\alpha _{2})}(t)=\mathrm{P}\{N(Y_{\alpha _{1},\alpha
_{2}}(t))=k\}=\int_{0}^{\infty }\frac{e^{-\lambda x}}{k!}(\lambda
x)^{k}f_{\alpha _{1},\alpha _{2}}(t,x)dx,  \label{MD3}
\end{equation}%
where $f_{\alpha _{1},\alpha _{2}}(t,x)$ is given by (\ref{MD2}), then using
(\ref{M2}),(\ref{M3}) we have for $k\geq 0,s>0$%
\begin{equation*}
\tilde{p}_{k}(s)=\int_{0}^{\infty }e^{-st}p_{k}^{(\alpha _{1},\alpha
_{2})}(t)dt=\int_{0}^{\infty }\frac{e^{-\lambda x}}{k!}(\lambda
x)^{k}\Big[\int_{0}^{\infty }e^{-st}f_{\alpha _{1},\alpha _{2}}(t,x)dt\Big]dx
\end{equation*}%
\begin{equation*}
=\frac{\lambda ^{k}}{k!}\frac{\phi (s)}{s}\int_{0}^{\infty }e^{-\lambda
x}x^{k}e^{-x\phi (s)}dx
\end{equation*}%
Note that 
\begin{align*}
&\frac{\partial ^{k}}{\partial \lambda ^{k}}\int_{0}^{\infty }e^{-\lambda
x}e^{-x\phi (s)}dx  =(-1)^{k}\int_{0}^{\infty }e^{-\lambda x}x^{k}e^{-x\phi
(s)}dx
\\
& \qquad =\frac{\partial ^{k}}{\partial \lambda ^{k}}\frac{1}{\lambda +\phi (s)}%
=(-1)^{k}\frac{k!}{(\lambda +\phi (s))^{k+1}};
\end{align*}%
thus%
\begin{equation*}
\tilde{p}_{k}(s)=\lambda ^{k}\frac{\phi (s)}{s(\lambda +\phi (s))^{k+1}}=%
\frac{\lambda ^{k}(C_{1}s^{\alpha _{1}}+C_{2}s^{\alpha _{2}})}{s(\lambda
+C_{1}s^{\alpha _{1}}+C_{2}s^{\alpha _{2}})^{k+1}},
\end{equation*}%
the same expression as (\ref{FOR1}). We can formulate the result in the
following form:

\begin{theorem}
The MFPP $N^{\alpha _{1},\alpha _{2}}$ defined in (\ref{SR}) is the
stochastic solution of the system of fractional differential-difference
equations (\ref{ME1}) with initial conditions (\ref{ME2}).
\end{theorem}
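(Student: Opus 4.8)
The plan is to identify the two families of functions $\{p_k^{(\alpha_1,\alpha_2)}\}_{k\ge 0}$ — the one \emph{defined} as the solution of the initial-value problem (\ref{ME1})--(\ref{ME2}), and the one \emph{supplied} by the stochastic representation (\ref{SR}) through the mixture formula (\ref{MD3}) — by showing that they possess identical Laplace transforms and then invoking the injectivity of the Laplace transform. Equivalently, since the transformed system determines a unique candidate, it suffices to verify that the stochastic probabilities reproduce that candidate.

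First I would solve (\ref{ME1}) in the Laplace domain. Using the transform of the Caputo--Djrbashian derivative (\ref{FD}) together with the initial data (\ref{ME2}), the case $k=0$ becomes an algebraic equation whose solution is $\tilde p_0(s)$, while for each $k\ge 1$ the equation collapses to the one-step recursion $\tilde p_k(s)\,(\lambda+C_1 s^{\alpha_1}+C_2 s^{\alpha_2})=\lambda\,\tilde p_{k-1}(s)$; iterating this recursion yields the closed form (\ref{FOR1}). Second, I would compute the transform directly from (\ref{MD3}): interchanging the $t$- and $x$-integrations (legitimate by nonnegativity and Tonelli) and substituting the known Laplace transform (\ref{M2}) of the density $f_{\alpha_1,\alpha_2}(t,x)$ reduces everything to the Gamma-type integral $\int_0^\infty e^{-\lambda x}x^k e^{-x\phi(s)}\,dx$, which is obtained by differentiating $\int_0^\infty e^{-(\lambda+\phi(s))x}\,dx=(\lambda+\phi(s))^{-1}$ repeatedly in $\lambda$. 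With $\phi(s)=C_1 s^{\alpha_1}+C_2 s^{\alpha_2}$ this produces exactly the right-hand side of (\ref{FOR1}).

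Since both computations deliver the same $\tilde p_k(s)$, uniqueness of the Laplace transform forces the stochastic probabilities to coincide with the unique solution of (\ref{ME1})--(\ref{ME2}), which is precisely the claim. I expect the only delicate point — the main obstacle — to be the rigorous passage from equality of transforms back to equality of the time-domain functions: this requires that each $p_k^{(\alpha_1,\alpha_2)}$ be continuous (or at least locally integrable) and that both transforms converge on a common right half-plane, so that the injectivity theorem applies. Continuity of the stochastic probabilities follows from the regularity of the inverse-subordinator density $f_{\alpha_1,\alpha_2}(t,\cdot)$ and dominated convergence, while the convergence of the transforms is guaranteed by the explicit rational-in-$s^{\alpha_i}$ forms obtained above.
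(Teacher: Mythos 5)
Your proposal is correct and follows essentially the same route as the paper: the paper likewise solves the system (\ref{ME1})--(\ref{ME2}) in the Laplace domain to obtain (\ref{FOR1}), then computes the Laplace transform of the stochastic probabilities (\ref{MD3}) via the transform (\ref{M2}) of $f_{\alpha_1,\alpha_2}$ and the Gamma-type integral $\int_0^\infty e^{-\lambda x}x^k e^{-x\phi(s)}\,dx$ evaluated by repeated differentiation in $\lambda$, and concludes by matching the two expressions. Your added care about Tonelli and the injectivity of the Laplace transform only makes explicit steps the paper leaves implicit.
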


Note that in \cite{BEG} one can find some other stochastic representations
of the MFPP (\ref{SR}). Also, some analytical expression for $p_{0}^{(\alpha
_{1},\alpha _{2})}(t)$ is given by (\ref{F}), while the analytical
expression for $p_{k}^{(\alpha _{1},\alpha _{2})}(t),$for $k\geq 1,$ are
given by (\ref{MD3}).

Moreover, $p_{k}^{(\alpha _{1},\alpha _{2})}(t),$for $k\geq 1,$ can be obtained
by the following recurrent relation: 
\begin{equation*}
p_{k}^{(\alpha _{1},\alpha _{2})}(t)=\mathop{\displaystyle \int}%
\limits_{0}^{t}p_{k-1}^{(\alpha _{1},\alpha _{2})}(t-z)g(z)dz,
\end{equation*}%
where 
\begin{equation*}
\tilde{g}(s)=\int_{0}^{\infty }e^{-sz}g(z)dz=\frac{\lambda }{\lambda
+C_{1}s^{\alpha _{1}}+C_{2}s^{\alpha _{2}}},
\end{equation*}
and from \eqref{eq:InvLap}:
\[
g(z) =\frac{\lambda}{C_2} z^{\alpha_2-1} \sum_{r=0}^\infty \Big(-\frac{C_1}{C_2}z^{\alpha_2-\alpha_1} \Big)^r 
E_{\alpha_2,\alpha_2+(\alpha_2-\alpha_1)r}^{r+1} \Big(-\frac{\lambda}{C_2} z^{\alpha_2}\Big).
\]

\subsection{Dependence}

From \cite[Theorem 2.1]{LMS} and (\ref{UI}), we have the following
expressions for moments in form of the function%
\begin{align*}
U(t) &=\frac{1}{C_{2}}t^{\alpha _{2}}E_{\alpha _{2}-\alpha _{1},\alpha
_{2}+1}(-C_{1}t^{_{\alpha _{2}-\alpha _{1}}}/C_{2}) ,\\
\mathrm{E}N^{\alpha _{1},\alpha _{2}}(t)&=\lambda U(t), \\
\mathrm{Var}N^{\alpha _{1},\alpha _{2}}(t) &=\lambda ^{2}\frac{1}{C_{2}^{2}}%
t^{2\alpha _{2}}[2E_{\alpha _{2}-\alpha _{1},\alpha _{1}+\alpha
_{2}+1}(-C_{1}t^{_{\alpha _{2}-\alpha _{1}}}/C_{2}) \\
&\qquad -(E_{\alpha _{2}-\alpha _{1},\alpha _{2}+1}(-C_{1}t^{_{\alpha _{2}-\alpha
_{1}}}/C_{2}))^{2}]\\
&\qquad +\lambda \frac{1}{C_{2}}t^{\alpha _{2}}E_{\alpha
_{2}-\alpha _{1},\alpha _{2}+1}(-C_{1}t^{_{\alpha _{2}-\alpha _{1}}}/C_{2}),
\\
\mathrm{Cov}(N^{\alpha _{1},\alpha _{2}}(t),N^{\alpha _{1},\alpha _{2}}(s))
& =\lambda U(\min (t,s)) 
+\lambda ^{2}\Big\{ \int_{0}^{\min
(t,s)}\Big(U(t-\tau )\\
& \qquad \qquad +U(s-\tau )\Big)dU(\tau )-U(t)U(s)\Big\} .
\end{align*}

We extend the Watanabe characterization for MFPP. 
Let $\Lambda (t):\mathbb R_+ \to \mathbb R_+ $ be a non-negative right-continuous 
non-decreasing deterministic function such that $\Lambda(0)=0$, 
$\Lambda(\infty)=\infty$,
and $\Lambda(t)-\Lambda(t-)\leq 1$ for
any $t$. Such a function will be called \emph{consistent}.
The Mixed-Fractional Non-homogeneous Poisson process
(MFNPP) is defined as
\begin{equation*}
N^{\alpha _{1},\alpha _{2}}_{\Lambda}=\{N^{\alpha _{1},\alpha _{2}}_{\Lambda}(t),t\geq
0\}= \{N(\Lambda(Y_{\alpha _{1},\alpha _{2}}(t))),t\geq 0\}, 
\end{equation*}%
where the homogeneous Poisson process $\ N$ with intensity $\lambda = 1$, and
the inverse subordinator $Y_{\alpha _{1},\alpha _{2}}$ given by (\ref{ISM})
are independent. 

\begin{theorem}
Let $X=\{X(t),\,t\geq 0\}$ be a simple locally finite point process. $X$
is a MFNPP iff there exist a consistent function $ \Lambda(t),$ and a mixed stable
subordinator \{$L_{\alpha _{1},\alpha _{2}}(t),\,t\geq 0\},$ $0<\alpha
_{1}<1,$ $0<\alpha _{2}<1,$ defined in (\ref{M1}), such that 
\begin{equation*}
M=\{M(t),\,t\geq 0\}=\{X(t)-\Lambda(Y_{\alpha _{1},\alpha _{2}}(t)),\,t\geq
0\}
\end{equation*}%
is a martingale with respect to the induced filtration $\mathcal{F}_t = 
\sigma( X(s), s\leq t) \vee \sigma(Y_{\alpha _{1},\alpha _{2}}(s),s\geq 0)$, where $Y_{\alpha _{1},\alpha _{2}}(t)=\inf \{s:L_{\alpha
_{1},\alpha _{2}}(t)\geq t\}$ is the inverse mixed stable subordinator. 
In addition, for any $T>0$,
\begin{equation*}
\{M(\tau), \tau\text{ stopping time s.t.\ }\Lambda(Y_{\alpha _{1},\alpha _{2}}(\tau))\leq T\}
\end{equation*}
is uniformly integrable. 
\end{theorem}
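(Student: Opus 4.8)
The plan is to mirror the proof of Theorem~\ref{thm:Teo2}, replacing the single inverse stable subordinator by the inverse mixed subordinator $Y_{\alpha_1,\alpha_2}$ and the linear compensator $\lambda Y_\alpha$ by the conditionally deterministic increasing process $\Psi(t):=\Lambda(Y_{\alpha_1,\alpha_2}(t))$. Two ingredients carry over directly. First, Lemma~\ref{lem:DOST} is used verbatim. Second, since $L_{\alpha_1,\alpha_2}=C_1^{1/\alpha_1}L_{\alpha_1}+C_2^{1/\alpha_2}L_{\alpha_2}$ is a sum of independent strictly increasing stable subordinators (each admitting a density), it is strictly increasing, so $Y_{\alpha_1,\alpha_2}$ is a.s.\ continuous and non-decreasing with range $[0,\infty)$. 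Throughout I would condition on $\mathcal{Y}:=\sigma(Y_{\alpha_1,\alpha_2}(s),s\ge0)\subseteq\mathcal{F}_0$, so that $\Psi$ becomes a fixed non-decreasing right-continuous function and $N$ an independent unit-rate Poisson process; the statement is then obtained by integrating the conditional arguments over $\mathcal{Y}$.

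For the direct implication, I would start from the classical Watanabe theorem with $\lambda=1$, which makes $s\mapsto N(s)-s$ a martingale. Because $\Lambda$ is deterministic, non-decreasing and right-continuous, the deterministic time change gives that $u\mapsto N(\Lambda(u))-\Lambda(u)$ is a martingale, and substituting the continuous change $u=Y_{\alpha_1,\alpha_2}(t)$ yields $M(t)=X(t)-\Psi(t)$. That the composition with $Y_{\alpha_1,\alpha_2}$ preserves the martingale property is exactly the optional-sampling step of Theorem~\ref{thm:Teo2}: conditionally on $\mathcal{Y}$ each $Y_{\alpha_1,\alpha_2}(t)$ is a bounded stopping time for the $u$-filtration, and Lemma~\ref{lem:DOST} applies once the requisite uniform integrability (from the $L^2$/moment control of the preceding subsection) is in hand. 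The additional uniform-integrability claim then follows identically: for a stopping time $\tau$ with $\Psi(\tau)\le T$, the stopped process $M(\tau\wedge t)$ is an $L^2$-bounded martingale null at $0$ converging in $L^2$ to $M(\tau)$, with
\[
\mathrm{E}(M^2(\tau))=\lim_{t\to\infty}\mathrm{E}(M^2(\tau\wedge t))\le \mathrm{Var}(N(T))+\mathrm{Var}(\Psi(\tau))\le \text{const}\cdot(1+T^2),
\]
so the family is uniformly bounded in $L^2$, hence uniformly integrable.

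For the converse I would again imitate Theorem~\ref{thm:Teo2}: set $Z(t)=\inf\{s:\Psi(s)\ge t\}$, a family of stopping times, so that Lemma~\ref{lem:DOST} makes $M(Z(t))=X(Z(t))-\Psi(Z(t))$ a martingale, and $X(Z(t))$ is a simple point process because $Z$ is non-decreasing. Where $\Psi$ is continuous one has $\Psi(Z(t))=t$, so $X(Z(t))-t$ is a martingale; the classical Watanabe theorem then identifies $X(Z(t))$ as a unit-rate Poisson process $N$, and $X(t)=N(\Psi(t))=N(\Lambda(Y_{\alpha_1,\alpha_2}(t)))$.

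The hard part is the role of the consistency condition $\Delta\Lambda\le1$ at the jumps of $\Psi$. When $\Psi$ jumps, the identity $\Psi(Z(t))=t$ fails (there is an overshoot), so the clean reduction to the homogeneous Watanabe theorem must be replaced by the Watanabe characterization for Cox processes of \cite[Chapter~II]{BR}: the martingale property says exactly that the conditionally deterministic increasing process $\Psi$ is the $\mathcal{F}_t$-compensator of the simple point process $X$, and a deterministic compensator forces $X$, conditionally on $\mathcal{Y}$, to be directed by $\Psi$, i.e.\ a Cox process directed by $\Lambda\circ Y_{\alpha_1,\alpha_2}$. The constraint $\Delta\Lambda\le1$ is precisely what makes $\Psi$ an admissible compensator of a simple point process, since at a predictable (here deterministic) jump time $t_0$ one must have $\Delta\Psi(t_0)=\mathrm{P}(\Delta X(t_0)=1\mid\mathcal{F}_{t_0-})\le1$, which is consistency. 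I expect the careful bookkeeping at the jump times of $\Lambda$, together with the verification that the reconstructed $N$ is genuinely independent of $Y_{\alpha_1,\alpha_2}$, to be the only delicate points; everything else transfers from Theorem~\ref{thm:Teo2}.
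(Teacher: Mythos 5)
Your proposal follows exactly the route the paper intends: the paper's entire proof of this theorem is the single sentence that it is analogous to Theorem~\ref{thm:Teo2}, and your argument is a faithful execution of that analogy --- the same use of Lemma~\ref{lem:DOST}, the same $L^2$ bound for the uniform-integrability claim, the same inverse time change $Z(t)=\inf\{s:\Psi(s)\geq t\}$ in the converse --- with the additional (correct) device of first conditioning on $\mathcal{Y}=\sigma(Y_{\alpha_1,\alpha_2}(s),s\geq 0)$ so that $\Psi=\Lambda\circ Y_{\alpha_1,\alpha_2}$ becomes deterministic. When $\Lambda$ is continuous, every step you write goes through at the same level of rigor as the paper's own proof of Theorem~\ref{thm:Teo2}, including points the paper leaves implicit (no points of $X$ on the flat stretches of $\Psi$, and the fact that the conditional law of $X(Z(\cdot))$ given $\mathcal{Y}$ does not depend on $\mathcal{Y}$, which yields the independence of the reconstructed Poisson process from the subordinator).

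However, your treatment of the jumps of $\Lambda$ --- the one place where the analogy with Theorem~\ref{thm:Teo2} genuinely breaks --- contains a real inconsistency, and it cannot be repaired, because it reflects a defect of the statement itself rather than of your argument. Conditionally on $\mathcal{Y}$, a \emph{simple} point process whose deterministic compensator $\Psi$ jumps by $c=\Delta\Psi(t_0)\in(0,1]$ at a time $t_0$ must satisfy $\Delta X(t_0)\sim\mathrm{Bernoulli}(c)$; this is exactly what the consistency bound $\Delta\Lambda\leq 1$ permits, as you yourself note. But a Cox process directed by $\Psi$ --- that is, $N(\Psi(\cdot))$ with $N$ a unit-rate Poisson process independent of $\mathcal{Y}$, which is what ``$X$ is a MFNPP'' means --- has a $\mathrm{Poisson}(c)$ jump at $t_0$, so it is not simple and its jump law is not Bernoulli. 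Hence your claim that a deterministic compensator forces $X$ to be a Cox process directed by $\Lambda\circ Y_{\alpha_1,\alpha_2}$ is false at the atoms of $\Psi$, and the process reconstructed in the converse cannot be written as $N(\Lambda(Y_{\alpha_1,\alpha_2}(\cdot)))$ there. Symmetrically, in the forward direction a MFNPP built from a discontinuous $\Lambda$ is never simple, so it is excluded by the hypothesis of the theorem. The stated equivalence is therefore literally valid only for continuous $\Lambda$: your proof is complete in that case, while in the jump case one direction is vacuous and the other fails --- a gap which the paper's one-line proof conceals rather than resolves.
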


%
%
%
%

\begin{proof}
The proof is analogue to that of Theorem~\ref{thm:Teo2}.
\end{proof}

\section{Two-Parameter Fractional Poisson Processes and Martingales}\label{sect:4}

\subsection{Homogeneous Poisson random fields}

This section collects some known results from the theory of two-parameter
Poisson processes and homogeneous Poisson random fields (PRF) (see, e.g., 
\cite{SKM,MN}, among the others).

Let $\left( \Omega ,\mathcal{F},P\right) $ be a complete probability space
and let $\left\{ \mathcal{F}_{t_{1},t_{2}};\left( t_{1},t_{2}\right) \in 
\mathbb{R}_+^2\right\} $ be a family of sub-$\sigma $-fields of $\mathcal{F}$
such that

(i) $\mathcal{F}_{s_{1},s_{2}}\subseteq \mathcal{F}_{t_{1},t_{2}}$ for any $%
\quad s_{1}\leq t_{1},\quad s_{2}\leq t_{2};$

(ii) $\mathcal{F}_{0,0}$ contains all null sets of $\mathcal{F};$

(iii) for each $z\in \mathbb{R}_+^2,$ $\mathcal{F}_{z}=\mathop{%
\displaystyle \bigcap }\limits_{z\prec z^{\prime }}\mathcal{F}_{z^{\prime }} 
$ where $z=\left( s_{1},s_{2}\right) \prec z^{\prime }=\left(
t_{1},t_{2}\right) $ denotes the partial order on $\mathbb{R}_+^2,$ which
means that $s_{1}\leq t_{1},\quad s_{2}\leq t_{2}.$

Given $\left( s_{1},s_{2}\right) \prec \left( t_{1},t_{2}\right) $ we denote
by 
\begin{equation*}
\Delta_{s_{1},s_{2}}X(t_{1},t_{2})=X(t_{1},t_{2})-X(t_{1},s_{2})-X(s_{1},t_{2})+X(s_{1},s_{2})
\end{equation*}
the increments of the random field $X(t_{1},t_{2}),(t_{1},t_{2})\in \mathbb{R%
}_{+}^{2}$ over the rectangle $(\left( s_{1},s_{2}\right) ,\left(
t_{1},t_{2}\right) ]$.
In addition, we denote
\begin{equation*}
\mathcal{F}_{\infty ,t_{2}}=
\sigma( \mathcal{F}_{t_{1},t_{2}}, t_{1}>0 ) , 
\mathcal{F}_{t_{1},\infty }=
\sigma( \mathcal{F}_{t_{1},t_{2}}, t_{2}>0 ) , 
\text{ and }\mathcal{F}^*_{s_1,s_2} = \mathcal{F}_{\infty,s_2} \vee \mathcal{F}_{s_1, \infty}
=%
\sigma(  \mathcal{F}_{s_{1},\infty },\mathcal{F}_{\infty ,s_{2}}).
\end{equation*}

A strong martingale is an integrable two-parameter process $X$ such that 
$$ E (\Delta_{s_{1},s_{2}}X(t_{1},t_{2}) | \mathcal{F}_{\infty,s_2} \vee \mathcal{F}_{s_1, \infty}) = 0,$$
for any $z=\left( s_{1},s_{2}\right) \prec z^{\prime }=\left(
t_{1},t_{2}\right) \in \mathbb{R}_+^2$.

Let $\left\{ \mathcal{F}_{t_{1},t_{2}}\right\} $ be a family of sub-$\sigma $%
-fields of $\mathcal{F}$ satisfying the previous conditions (i), (ii), (iii)
for all $(t_{1},t_{2})\in \mathbb{R}_{+}^{2}.$ A $\mathcal{F}%
_{t_{1},t_{2}}- $ PRF is an adapted, cadlag field $N=\left\{
N(t_{1},t_{2}),(t_{1},t_{2})\in \mathbb{R}_{+}^{2}\right\} $, such that,

(1) $N(t_{1},0)=N(0,t_{2})=0$ a.s.

(2) for all $\left( s_{1},s_{2}\right) \prec \left( t_{1},t_{2}\right) $ the
increments $\Delta _{s_{1},s_{2}}N(t_{1},t_{2})$ are independent of $%
\mathcal{F}_{\infty ,s_{2}}\mathop{\vee } \mathcal{F}%
_{s_{1},\infty },$ and has a Poisson law with parameter $\lambda \left(
t_{1}-s_{1}\right) \left( t_{2}-s_{2}\right) $, that is, 
\begin{equation*}
\mathrm{P}\left\{ \Delta _{s_{1},s_{2}}N(t_{1},t_{2})=k\right\} =\frac{%
e^{-\lambda \left\vert S\right\vert }\left( \lambda \left\vert S\right\vert
\right) ^{k}}{k!},\quad \lambda >0,\quad k=0,1,\ldots ,
\end{equation*}
where $S=(\left( s_{1},s_{2}\right) ,\left( t_{1},t_{2}\right) ]$, 
$\lambda >0$, and $|S|$ is the Lebesgue measure of $S$.

If we do not specify the filtration, $\left\{ \mathcal{F}_{t_{1},t_{2}}%
\right\} $ will be the filtration generated by the field itself, completed
with the nulls sets of $\mathcal{F}^{N}=\sigma \left\{
N(t_{1},t_{2}),(t_{1},t_{2})\in \mathbb{R}_{+}^{2}\right\} .$

It is known that then there is a simple locally finite point random measure $%
N(\cdot ),$ such that for any finite $n=1,2,\dots ,$ and for any disjoint
bounded Borel sets $A_{1},...,A_{n}$%
\begin{align*}
&\mathrm{P}\left( N(A_{1})=k_{1},...,N(A_{n})=k_{n}\right)  
\\
& \qquad =\frac{\lambda ^{k_{1}+...+k_{n}}}{k_{1}!\cdot ..\cdot k_{n}!}(\left\vert
A_{1}\right\vert )^{k_1}\cdot \cdot \cdot (\left\vert A_{n}\right\vert
)^{k_{n}}\exp \left\{ -\sum_{j=1}^{n}\lambda \left\vert A_{j}\right\vert
\right\} ,\quad k_{j}=0,1,2,...,  \notag
\end{align*}
while 
\begin{equation*}
\mathrm{E}N(A)=\lambda \left\vert A\right\vert ,\text{ }\mathrm{Cov}%
(N(A_{1}),N(A_{2}))=\lambda \left\vert A_{1}\cap A_{2}\right\vert .
\end{equation*}

\begin{theorem}[Two Parameter Watanabe Theorem \cite{IM3}]
A random simple locally finite counting measure $N$ is a two-parameter PRF 
iff $N(t_1,t_2)-\lambda t_1t_2$ is a strong martingale.
\end{theorem}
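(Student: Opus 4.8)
The plan is to prove both implications, with the forward direction reducing to a one-line computation and the converse reducing, via a strip-projection onto a coordinate axis, to the one-parameter Watanabe theorem already recorded above. Throughout I write $M(t_1,t_2)=N(t_1,t_2)-\lambda t_1t_2$.

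For the forward implication, suppose $N$ is a two-parameter PRF. First I would record the elementary identity $\Delta_{s_1,s_2}(\lambda t_1t_2)=\lambda(t_1-s_1)(t_2-s_2)$, so that $\Delta_{s_1,s_2}M(t_1,t_2)=\Delta_{s_1,s_2}N(t_1,t_2)-\lambda(t_1-s_1)(t_2-s_2)$. By property~(2) of a PRF, the rectangle increment $\Delta_{s_1,s_2}N(t_1,t_2)$ is independent of $\mathcal F_{\infty,s_2}\vee\mathcal F_{s_1,\infty}$ and is Poisson with mean $\lambda(t_1-s_1)(t_2-s_2)$; hence its conditional expectation given that $\sigma$-field equals its mean, and $\mathrm{E}(\Delta_{s_1,s_2}M(t_1,t_2)\mid\mathcal F_{\infty,s_2}\vee\mathcal F_{s_1,\infty})=0$. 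This is exactly the strong-martingale property, so this direction is immediate.

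For the converse, fix $0\le s_2<t_2$ and consider the one-parameter strip process $\tilde N(u)=N(u,t_2)-N(u,s_2)$, $u\ge0$. A direct cancellation gives $\tilde N(u)-\lambda(t_2-s_2)u=M(u,t_2)-M(u,s_2)$, and for $s_1<u$ the increment of the right-hand side equals $\Delta_{s_1,s_2}M(u,t_2)$, whose conditional expectation given $\mathcal G_{s_1}:=\mathcal F_{\infty,s_2}\vee\mathcal F_{s_1,\infty}$ vanishes by the strong-martingale hypothesis. Since $\tilde N(u)$ is $\mathcal F_{u,\infty}$-measurable it is $\mathcal G$-adapted, so $u\mapsto M(u,t_2)-M(u,s_2)$ is a one-parameter $\mathcal G$-martingale. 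Granting that $\tilde N$ is a simple locally finite point process, the one-parameter Watanabe theorem forces $\tilde N$ to be a homogeneous Poisson process of rate $\lambda(t_2-s_2)$ whose increments over $(s_1,u]$ are independent of $\mathcal G_{s_1}$. Because $\tilde N(u)-\tilde N(s_1)=\Delta_{s_1,s_2}N(u,t_2)$, this is precisely property~(2): every rectangle increment is Poisson with parameter $\lambda|S|$ and independent of $\mathcal F_{\infty,s_2}\vee\mathcal F_{s_1,\infty}$. Property~(1), $N(t_1,0)=N(0,t_2)=0$, is built into $N$ being a counting measure vanishing on the axes, so $N$ is a PRF.

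The main obstacle is the \emph{simplicity} of the projected process $\tilde N$: its jumps have size one only if, almost surely, no two points of $N$ inside the strip $\mathbb R_+\times(s_2,t_2]$ share the same first coordinate, and without this one cannot legitimately invoke the one-parameter Watanabe theorem. I expect to obtain this from the fact that the compensator $\lambda t_1t_2$ is deterministic and continuous, which makes the jump times totally inaccessible and rules out two points lying on a common vertical line; alternatively one may pass to vertically thin strips $(s_2,s_2+\varepsilon]$, in which double points are negligible, and recover the full statement by letting $\varepsilon\downarrow0$ and patching strips together. A secondary technical point, which I would verify from axioms (i)--(iii), is that the filtration $\mathcal G_{s_1}=\mathcal F_{\infty,s_2}\vee\mathcal F_{s_1,\infty}$ inherits right-continuity in $s_1$, so that the hypotheses of the one-parameter Watanabe theorem are genuinely met.
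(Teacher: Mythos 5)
The first thing to note is that the paper offers no proof of this statement at all: it is quoted as known background and attributed to \cite{IM3}. So your attempt cannot be compared with an internal argument of the paper, only judged on its own merits. Your forward implication is correct and complete, and the martingale bookkeeping in your converse is also correct: with $\mathcal{G}_u=\mathcal{F}_{\infty,s_2}\vee\mathcal{F}_{u,\infty}$, the process $u\mapsto\tilde N(u)-\lambda(t_2-s_2)u$ is a $\mathcal{G}_u$-adapted martingale, and if the one-parameter Watanabe theorem applied to $\tilde N$, property (2) of a PRF would follow for every rectangle. Everything therefore hinges on the obstacle you flag yourself, and that is where the proposal has a genuine gap.

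Neither of your proposed repairs for the simplicity of $\tilde N$ works as stated. The first rests on a false implication: a continuous, even deterministic, compensator makes all jump times totally inaccessible, but it says nothing about jump \emph{sizes}. If $P$ is a Poisson process of rate $\lambda(t_2-s_2)/2$, then $2P$ has deterministic continuous compensator $\lambda(t_2-s_2)u$, and every one of its jumps has size $2$ and occurs at a totally inaccessible time. Thus, on the basis of the one-parameter information you invoke (a counting process whose $\mathcal{G}$-compensator is $\lambda(t_2-s_2)u$), $\tilde N$ could perfectly well be such a doubled process, and ``totally inaccessible jump times rule out two points on a common vertical line'' is simply not a valid inference. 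The second repair is also not viable: for any fixed strip width the projection can fail to be simple with positive probability, Watanabe's theorem cannot be applied ``approximately'' to a non-simple process, and conditioning on the absence of double points destroys the martingale property.

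The missing idea is that the strong martingale hypothesis gives you strictly more than a martingale property of the projection: it gives it for the filtration $(\mathcal{G}_u)$ whose time-zero $\sigma$-field already contains all of $\mathcal{F}_{\infty,s_2}$. Consequently the first coordinate of every point of $N$ lying below the level $s_2$ is a $\mathcal{G}$-\emph{predictable} time (being $\mathcal{G}_0$-measurable), and an increasing process whose compensator is continuous cannot jump at a predictable time $T$, since $\mathrm{E}\bigl(\Delta\tilde N_T\bigr)=\mathrm{E}\bigl(\Delta A_T\bigr)=0$ when $A$ is continuous. Hence a.s.\ no point in the strip $(s_2,t_2]$ shares its first coordinate with a point below $s_2$; since two distinct points of the simple planar measure $N$ in a common column have distinct heights and are thus separated by some rational level, running $s_2$ over the rationals yields that a.s.\ no two points of $N$ share a first coordinate. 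Only after this lemma is $\tilde N$ simple and your invocation of the one-parameter Watanabe theorem legitimate. So the two ingredients you name (inaccessibility, thin strips) do appear in a correct argument, but through predictability in the enlarged filtration rather than through the claims you make for them. Note finally that this difficulty is an artifact of projecting: arguments that stay in the plane (a refining grid of cells, handled in anti-diagonal order so that the strong martingale property applies to each cell) need only planar simplicity, because a fine enough grid eventually separates any two distinct points, whereas no thin strip ever separates two points in the same column.
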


\subsection{Fractional Poisson random fields}

Let $Y_{\alpha _{1}}^{(1)}(t),t\geq 0$ and $Y_{\alpha _{2}}^{(2)}(t),t\geq 0$
be two independent inverse stable subordinators with indices $\alpha _{1}\in
(0,1)$ and $\alpha _{2}\in (0,1),$ which are independent of the Poisson
field $N(t_{1},t_{2}),(t_{1},t_{2})\in \mathbb{R}_{+}^{2}.$ 
In \cite{LM}, the Fractional Poisson field (FPRF) is defined as follows
\begin{equation}
N_{\alpha _{1},\alpha _{2}}(t_{1},t_{2})=N(Y_{\alpha
_{1}}^{(1)}(t_1),Y_{\alpha _{2}}^{(2)}(t_{2})),\quad (t_{1},t_{2})\in 
\mathbb{R}_{+}^{2}.  \label{FPRF}
\end{equation}

We obtain the marginal distribution of FPRF: for $k=0,1,2,\ldots$,
\begin{align}
p_{k}^{\alpha _{1},\alpha _{2}}(t_{1},t_{2})& =\mathrm{P}\left( N_{\alpha
_{1},\alpha _{2}}(t_{1},t_{2})=k\right) \notag \\
& =\int_{0}^{\infty }\int_{0}^{\infty }\frac{e^{-\lambda x_{1}x_{2}}(\lambda
x_{1}x_{2})^{k}}{k!}f_{\alpha _{1}}(t_{1},x_{1})f_{\alpha
_{2}}(t_{2},x_{2})dx_{1}dx_{2},  \label{P2}
\end{align}%
where $f_{a}(t,x)$ is given by (\ref{D1}). In other words, for $(t_{1},t_{2})\in \mathbb{R}_{+}^{2},\ k=0,1,\ldots  $
\begin{align}
\mathrm{P}\left( N_{\alpha _{1},\alpha _{2}}(t_{1},t_{2})=k\right)
&
=\frac{t_{1}t_{2}\lambda ^{k}}{\alpha _{1}\alpha _{2}k!}\int_{0}^{\infty
}\int_{0}^{\infty }e^{-\lambda x_{1}x_{2}}x_{1}^{k-1-\frac{1}{\alpha _{1}}%
}x_{2}^{k-1-\frac{1}{\alpha _{2}}}g_{\alpha _{1}}(t_{1}x_{1}^{-\frac{1}{%
\alpha _{1}}})g_{\alpha _{2}}(t_{2}x_{2}^{-\frac{1}{\alpha _{2}}%
})dx_{1}dx_{2},  \notag \\
& =\frac{\lambda ^{k}}{k!t_{1}t_{2}}\int_{0}^{\infty }\int_{0}^{\infty
}e^{-\lambda x_{1}x_{2}}x_{1}^{k+\frac{1}{\alpha _{1}}}x_{2}^{k+\frac{1}{%
\alpha _{2}}}W_{-\alpha _{1},0}(-\frac{x_{1}}{t_{1}^{\alpha _{1}}}%
)W_{-\alpha _{2},0}(-\frac{x_{2}}{t_{2}^{\alpha _{2}}})dx_{1}dx_{2},  \label{LP} 
\end{align}%
where the Wright generalized Bessel function is defined by (\ref{WR}), and 
$g_{\alpha}(x)$ is defined by \eqref{10}.

Using the Laplace transform given by (\ref{L1})
one can obtain an exact expression for the double Laplace transform of (\ref%
{P2}): for $k=0,1,2,\ldots$,
\begin{multline}
\mathcal{L}\{p_{k}(t_{1},t_{2});s_{1},s_{2}\}=\int_{0}^{\infty
}\int_{0}^{\infty }e^{-s_{1}t_{1}-s_{2}t_{2}}p_{k}(t_{1},t_{2})dt_{1}dt_{2}
\label{L2} \\
=\int_{0}^{\infty }\int_{0}^{\infty }\frac{e^{-\lambda x_{1}x_{2}}(\lambda
x_{1}x_{2})^{k}}{k!}s_{1}^{\alpha _{1}-1}s_{2}^{\alpha _{2}-1}\exp
\{-x_{1}s_{1}^{\alpha _{1}}-x_{2}s_{2}^{\alpha _{2}}\}dx_{1}dx_{2}.
\end{multline}


Note that 
\begin{align}
\mathrm{E}N_{\alpha _{1},\alpha _{2}}(t_{1},t_{2})& =
\mathrm{E}\big[
\mathrm{E}[N(Y_{\alpha _{1}}(t_1),Y_{\alpha _{2}}(t_2)) | 
Y_{\alpha _{1}}(t_1),Y_{\alpha _{2}}(t_2) ] \big]  \notag \\
& =\int_{0}^{\infty
}\int_{0}^{\infty }\mathrm{E}N(u_{1},u_{2})f_{\alpha
_{1}}(t_{1},u_{1})f_{\alpha _{2}}(t_{2},u_{2})du_{1}du_{2}  \notag \\
& =\lambda t_{1}^{\alpha _{1}}t_{2}^{\alpha _{2}}/[\Gamma (1+\alpha
_{1})\Gamma (1+\alpha _{2})]  \label{EXP1}
\end{align}%
and, for $(t_{1},t_{2}),(s_{1},s_{2})\in \mathbb{R}_{+}^{2}$,
\begin{multline}
\mathrm{Cov}(N_{\alpha _{1},\alpha _{2}}(t_{1},t_{2}),N_{\alpha _{1},\alpha
_{2}}(s_{1},s_{2})) \label{COVF} \\\begin{footnotesize}
\begin{aligned}
=\lambda ^{2}&\Big\{\Big[\frac{1}{\Gamma (1+\alpha _{1})\Gamma (\alpha _{1})}%
\int_{0}^{\min (t_{1},s_{1})}(t_{1}-\tau _{1})^{\alpha _{1}}+(s_{1}-\tau
_{1})^{\alpha _{1}})\tau _{1}^{\alpha _{1}-1}d\tau _{1}-\frac{%
(s_{1}t_{1})^{\alpha _{1}}}{\Gamma ^{2}(1+\alpha _{1})}\Big]   \\
&\times \Big[\frac{1}{\Gamma (1+\alpha _{2})\Gamma (\alpha _{2})}%
\int_{0}^{\min (t_{2},s_{2})}(t_{2}-\tau _{2})^{\alpha _{2}}+(s_{2}-\tau
_{2})^{\alpha _{2}})\tau _{2}^{\alpha _{2}-1}d\tau _{2}-\frac{%
(s_{2}t_{2})^{\alpha _{2}}}{\Gamma ^{2}(1+\alpha _{2})}\Big]   \\
&
+\frac{(t_{1}s_{1})^{\alpha _{1}}}{\Gamma ^{2}(1+\alpha _{1})}   
\Big[\frac{1}{%
\Gamma (1+\alpha _{2})\Gamma (\alpha _{2})}\int_{0}^{\min
(t_{2},s_{2})}\left( (t_{2}-\tau _{2})^{\alpha _{2}}+(s_{2}-\tau
_{2})^{\alpha _{2}}\right) \tau _{2}^{\alpha _{2}-1}d\tau _{2}-\frac{%
(s_{2}t_{2})^{\alpha _{2}}}{\Gamma ^{2}(1+\alpha _{2})} \Big]  \\
&
+\frac{(t_{2}s_{2})^{\alpha _{1}}}{\Gamma ^{2}(1+\alpha _{2})}
\Big[\frac{1}{%
\Gamma (1+\alpha _{1})\Gamma (\alpha _{1})}\int_{0}^{\min
(t_{1},s_{1})}\left( (t_{1}-\tau _{1})^{\alpha _{1}}+(s_{1}-\tau
_{1})^{\alpha _{1}}\right) \tau _{1}^{\alpha _{1}-1}d\tau _{1}-\frac{%
(s_{1}t_{1})^{\alpha _{1}}}{\Gamma ^{2}(1+\alpha _{1})} \Big] \Big\} \\
& +\lambda \frac{(\min (t_{1},s_{1}))^{\alpha _{1}}(\min
(t_{2},s_{2}))^{\alpha _{2}}}{\Gamma (1+\alpha _{1})\Gamma (1+\alpha _{2})}%
;
\end{aligned} \end{footnotesize}
\end{multline}
in particular, for $(t_{1},t_{2}),(s_{1},s_{2})\in \mathbb{R}_{+}^{2}$,
\begin{equation}
\mathrm{Var}N_{\alpha _{1},\alpha _{2}}(t_{1},t_{2})=\lambda
^{2}t_{1}{}^{2\alpha _{1}}t_{2}{}^{2\alpha _{2}}C_{1}(\alpha _{1},\alpha
_{2})+\lambda t_{1}{}^{\alpha _{1}}t_{2}{}^{\alpha _{2}}C_{2}(\alpha
_{1},\alpha _{2})\},  \label{VAR}
\end{equation}%
where 
\begin{eqnarray*}
C_{1}(\alpha _{1},\alpha _{2}) &=&\frac{1}{\alpha _{1}\alpha _{2}\Gamma
(2\alpha _{1})\Gamma (2\alpha _{2})}-\frac{1}{(\alpha _{1}\alpha
_{2})^{2}\Gamma ^{2}(\alpha _{1})\Gamma ^{2}(\alpha _{2})};\quad \\
C_{2}(\alpha _{1},\alpha _{2}) &=&\frac{1}{\Gamma (1+\alpha _{1})\Gamma
(1+\alpha _{2})}.
\end{eqnarray*}

We can summarize our results in the following

\begin{proposition}\label{prop:4.2}
Let $N_{\alpha _{1},\alpha
_{2}}(t_{1},t_{2}),(t_{1},t_{2})\in \mathbb{R}_{+}^{2},$ be a FPRF defined
by (\ref{FPRF}). Then
\begin{enumerate}
\item[i)] $\mathrm{P}\left( N_{\alpha _{1},\alpha
_{2}}(t_{1},t_{2})=k\right) ,$ $k=0,1,2...$ is given by (\ref{LP});
\item[ii)] $\mathrm{E}N_{\alpha _{1},\alpha _{2}}(t_{1},t_{2}),\mathrm{Var}%
N_{\alpha _{1},\alpha _{2}}(t_{1},t_{2})$ and $\mathrm{Cov}(N_{\alpha
_{1},\alpha _{2}}(t_{1},t_{2}),N_{\alpha _{1},\alpha _{2}}(s_{1},s_{2}))$
are given by (\ref{EXP1}), (\ref{VAR}), (\ref{COVF}), respectively.
\end{enumerate}
\end{proposition}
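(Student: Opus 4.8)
The plan is to push all analytic difficulty onto the one-dimensional quantities already recorded in Proposition~\ref{prop:2.1}, by conditioning throughout on the two independent inverse subordinators $Y^{(1)}_{\alpha_1}$ and $Y^{(2)}_{\alpha_2}$. With this reduction, part~(i) and the mean \eqref{EXP1} are immediate, the covariance \eqref{COVF} is the one substantive computation, and the variance \eqref{VAR} is its diagonal case.

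For part~(i) I would condition on $(Y^{(1)}_{\alpha_1}(t_1),Y^{(2)}_{\alpha_2}(t_2))=(x_1,x_2)$. Since $N$ is a homogeneous PRF independent of both subordinators, the conditional law of $N(x_1,x_2)$ is Poisson with parameter equal to the Lebesgue measure $\lambda x_1x_2$ of the rectangle $[0,x_1]\times[0,x_2]$. Integrating $e^{-\lambda x_1x_2}(\lambda x_1x_2)^k/k!$ against the joint density of the subordinators, which factorizes as $f_{\alpha_1}(t_1,x_1)f_{\alpha_2}(t_2,x_2)$ by independence, yields \eqref{P2}; substituting the density \eqref{D1} together with the Wright representation \eqref{10} then gives \eqref{LP}. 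The mean \eqref{EXP1} follows the same way from the tower property: $\mathrm{E}N_{\alpha_1,\alpha_2}(t_1,t_2)=\lambda\,\mathrm{E}Y^{(1)}_{\alpha_1}(t_1)\,\mathrm{E}Y^{(2)}_{\alpha_2}(t_2)$, into which Proposition~\ref{prop:2.1}(iii)(A) inserts $\mathrm{E}Y_\alpha(t)=t^\alpha/\Gamma(1+\alpha)$.

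For the covariance I would apply the conditional covariance formula recorded above, conditioning on the $\sigma$-field $\mathcal{Y}$ generated by the four times $u_1=Y^{(1)}_{\alpha_1}(t_1)$, $u_2=Y^{(1)}_{\alpha_1}(s_1)$, $v_1=Y^{(2)}_{\alpha_2}(t_2)$, $v_2=Y^{(2)}_{\alpha_2}(s_2)$. Conditionally the two field values are counts over the rectangles $[0,u_i]\times[0,v_i]$, whose intersection has measure $\min(u_1,u_2)\min(v_1,v_2)$; hence $\mathrm{Cov}(N_1,N_2\mid\mathcal{Y})=\lambda\min(u_1,u_2)\min(v_1,v_2)$ while the conditional means are $\lambda u_iv_i$. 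This decomposes the covariance as $\lambda\,\mathrm{E}[\min(u_1,u_2)\min(v_1,v_2)]+\lambda^2\,\mathrm{Cov}(u_1v_1,u_2v_2)$. The first term simplifies because monotonicity of the paths gives $\min(Y_\alpha(t),Y_\alpha(s))=Y_\alpha(\min(t,s))$, so by independence it equals $\lambda\,\mathrm{E}Y^{(1)}_{\alpha_1}(\min(t_1,s_1))\,\mathrm{E}Y^{(2)}_{\alpha_2}(\min(t_2,s_2))$, the last line of \eqref{COVF}. For the second term I would factor $\mathrm{Cov}(u_1v_1,u_2v_2)=\mathrm{E}[u_1u_2]\,\mathrm{E}[v_1v_2]-\mathrm{E}u_1\mathrm{E}u_2\,\mathrm{E}v_1\mathrm{E}v_2$, write $\mathrm{E}[u_1u_2]=\mathrm{Cov}(u_1,u_2)+\mathrm{E}u_1\mathrm{E}u_2$ and likewise for the $v$'s, and substitute the one-dimensional covariance \eqref{COV4} for $\mathrm{Cov}(Y_{\alpha_1}(t_1),Y_{\alpha_1}(s_1))$ together with its $\alpha_2$-analogue. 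Expanding, the $\mathrm{Cov}\cdot\mathrm{Cov}$ contribution is the product-of-brackets term of \eqref{COVF}, while the two mixed contributions $\mathrm{Cov}(u_1,u_2)\,\mathrm{E}v_1\mathrm{E}v_2$ and $\mathrm{E}u_1\mathrm{E}u_2\,\mathrm{Cov}(v_1,v_2)$ give the remaining terms, each bracket being paired with the factor $(t_is_i)^{\alpha_i}/\Gamma^2(1+\alpha_i)$. The variance \eqref{VAR} is then the specialization $(s_1,s_2)=(t_1,t_2)$, using $\mathrm{E}[Y_\alpha(t)]^2=2t^{2\alpha}/\Gamma(1+2\alpha)$ from Proposition~\ref{prop:2.1}(iii)(A); the identities $\Gamma(1+\alpha)=\alpha\Gamma(\alpha)$ and $\Gamma(1+2\alpha)=2\alpha\Gamma(2\alpha)$ collapse the $\lambda^2$-coefficient to the stated $C_1(\alpha_1,\alpha_2)$.

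The main obstacle is precisely the one flagged in the Remark above: when more than one variable sits in the conditioning, the iterated conditional-covariance bookkeeping is delicate, and one must be sure the planar Poisson covariance $\lambda|A_1\cap A_2|$ interacts correctly with the two independent time changes. The clean resolution is the pair of structural facts used above—monotonicity collapsing the minima of each subordinator into a single evaluation, and independence factoring the four-fold expectation into products—which is exactly the systematic technique developed in the Appendix; I would appeal to it to legitimize the interchange rather than expanding the multivariate conditional-variance formula by hand.
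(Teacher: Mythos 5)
Your proof is correct, and its skeleton is the same as the paper's: the paper proves Proposition~\ref{prop:4.2} by specializing Theorem~\ref{thm:A1} of the Appendix together with Proposition~\ref{prop:2.1}, and both arguments reduce the covariance to the identical intermediate identity (in your notation $u_1=Y^{(1)}_{\alpha_1}(t_1)$, $u_2=Y^{(1)}_{\alpha_1}(s_1)$, $v_1=Y^{(2)}_{\alpha_2}(t_2)$, $v_2=Y^{(2)}_{\alpha_2}(s_2)$)
\begin{multline*}
\mathrm{Cov}\bigl(N_{\alpha_1,\alpha_2}(t_1,t_2),N_{\alpha_1,\alpha_2}(s_1,s_2)\bigr)
=\lambda\,\mathrm{E}Y_{\alpha_1}\bigl(\min(t_1,s_1)\bigr)\,\mathrm{E}Y_{\alpha_2}\bigl(\min(t_2,s_2)\bigr)\\
+\lambda^{2}\bigl(\mathrm{E}[u_1u_2]\,\mathrm{E}[v_1v_2]-\mathrm{E}u_1\,\mathrm{E}u_2\,\mathrm{E}v_1\,\mathrm{E}v_2\bigr),
\end{multline*}
followed by the same Gamma-function algebra. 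The genuine difference is the tool used to reach it. You invoke the law of total covariance once, with respect to the joint $\sigma$-field $\mathcal{Y}$ generated by the four evaluation times, together with $\mathrm{Cov}(N(A_1),N(A_2))=\lambda|A_1\cap A_2|$. The paper deliberately avoids any conditional-covariance decomposition for fields --- this is exactly the point of the Remark in Section~\ref{sect:3} on conditioning with more than one variable --- and instead computes the raw product moment $\mathrm{E}[N(s_1,s_2)N(t_1,t_2)]$ of the deterministic-time field via the independent-increments decomposition of rectangles, integrating against the laws of $Y_1,Y_2$ only afterwards. Your route is shorter, and it shows that the obstacle flagged in that Remark disappears if one conditions a single time on the joint $\sigma$-field rather than iterating nested conditionings; the paper's route buys freedom from any conditional-covariance identity and is stated for arbitrary L\'evy random fields with finite second moments. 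Two small points to make explicit: the formula recorded in the paper is stated for a single conditioning random variable, so you should note that the $\sigma$-field version holds and that, since $N$ is independent of $\mathcal{Y}$ and all moments involved are finite, the conditional law of $(N(u_1,v_1),N(u_2,v_2))$ given $\mathcal{Y}$ is that of the field at frozen points (a standard freezing argument); and your pairing of each one-dimensional covariance bracket with the factor $(t_is_i)^{\alpha_i}/\Gamma^{2}(1+\alpha_i)$ is the correct one --- \eqref{COVF} as printed carries a typo, $(t_2s_2)^{\alpha_1}$ where $(t_2s_2)^{\alpha_2}$ is meant --- so your expansion corrects rather than contradicts the target formula.
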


The proof is given in \cite{LRMT}, see also Appendix for more details
and more general results hold for any L\'{e}vy random fields.


\begin{remark}
Following the ideas of this paper, the Hurst index of the Fractional Poisson
random field in $d=2$ can be defined as follows: 
\begin{equation*}
H=\inf \left\{ \beta :\lim \sup_{T\rightarrow \infty }\frac{\mathrm{Var}%
N_{\alpha _{1},\alpha _{2}}(T,T)}{{T^{2d\beta}}}<\infty \right\} 
= \frac{\alpha _{1}+\alpha _{2}}{2} \in(0,1).
\end{equation*}
\end{remark}

\begin{remark}
Any random field 
\begin{equation*}
Z(t_{1},t_{2})=N(Y_{1}(t_{1}),Y_{2}(t_{2})),\quad (t_{1},t_{2})\in \mathbb{R}%
_{+}^{2}
\end{equation*}
defined on the positive quadrant $\mathbb{R}_+^2$ can be extended in the
whole space $\mathbb{R}^{2}$ in the following way: let $%
Z_{j}(t_{1},t_{2}),(t_{1},t_{2})\in \mathbb{R}_{+}^{2},\ j=1,2,3,4$ be
independent copies of the random field $Z(t_{1},t_{2}),(t_{1},t_{2})\in 
\mathbb{R}_{+}^{2}.$

Then one can define 
\begin{equation*}
\bar{Z}(t_{1},t_{2})=
\begin{cases}
\phantom{-}Z_{1}(t_{1},t_{2}), & t_{1}\geq 0,  t_{2}\geq 0 \\ 
-Z_{2}(-t_{1}^-,t_{2}),\  & t_{1}<0, t_{2}\geq 0 \\ 
-Z_{3}(t_{1},-t_{2}^-),\  & t_{1}\geq 0, t_{2}<0 \\ 
\phantom{-}Z_{4}(-t_{1}^-,-t_{2}^-), & t_{1}<0, t_{2}<0%
\end{cases}
\end{equation*}%
Therefore, modifying the cadlag property we obtain a Poisson like random
field 
$\bar{Z}(t_{1},t_{2}),(t_{1},t_{2})\in \mathbb{R}^{2}$ which has a similar
covariance structure (replacing $t_{1},t_{2},s_{1},s_{2}$ by $%
|t_{1}|,|t_{2}|,|s_{1}|,|s_{2}|$).
\end{remark}

\subsection{Characterization on increasing paths}

Let $L_{\alpha }=\{L_{\alpha }(t),t\geq 0\}$, be an $\alpha$-stable
subordinator, and $Y_{\alpha }=\{Y_{\alpha }(t),t\geq 0\} $ be its inverse ($%
\alpha \in (0,1)$). Recall that $L_{\alpha }(t)$ is a cadlag strictly
increasing process, while $Y_{\alpha }$ $(t)$ is nondecreasing and
continuous. As a consequence, the latter defines a random nonnegative
measure $\mu _{\alpha }$ on $({\mathbb{R}}_{+},{\mathcal{B}}_{{\mathbb{R}}%
_{+}})$ such that $\mu _{\alpha }([0,t])=Y_{\alpha }(t)$. The $\sigma $%
-algebra $\mathcal{G}$ contains all the information given by $\mu _{\alpha }$%
: 
\begin{equation*}
\mathcal{G}:=\sigma (L_{\alpha }(t),t\geq 0)=\sigma (Y_{\alpha }(t),t\geq
0)=\sigma (\mu _{\alpha }(B),B\in {\mathcal{B}}_{{\mathbb{R}}_{+}}).
\end{equation*}
Now, let $X(t) = N(Y_{\alpha}(t))$ be a FPP, where $N$ has intensity $\lambda$. 
We denote by $\{\mathcal{F}_t^X,t\in\mathbb{R}_+\}$
its natural filtration. 
We note that each $\mu _{\alpha }([0,t])$ is $\mathcal{G}$-measurable, 
while $N(w)-N(\mu _{\alpha }([0,s]))$ is independent of $\sigma(\mathcal{F}_s^X,\mathcal{G}) $
for any 
$w\geq \mu _{\alpha }([0,s])$. 
Hence, for any bounded $\mathcal{F}_s^X$-measurable random variable $Y(s)$, we have
\begin{align*}
{\rm E}\Big( \int_0^\infty Y(s) {\indI}_{(s,t]}(v)\, dX_v\Big) 
& = 
{\rm E}\Big( Y(s) {\rm E}\Big( \int_0^\infty {\indI}_{(s,t]} (v)\, N(\mu _{\alpha }(dv))\Big| \sigma(\mathcal{F}_s^X,\mathcal{G}) \Big) \Big) 
\\
& = 
{\rm E}\Big( Y(s) \int_0^\infty {\indI}_{(\mu _{\alpha }([0,s]),\mu _{\alpha }([0,t])]} (w)\, 
{\rm E} ( dN_w | \sigma(\mathcal{F}_s^X,\mathcal{G}) ) \Big) 
\\
& = 
{\rm E}\Big( Y(s) \lambda \mu _{\alpha }((s,t]) \Big) 
\\
& = {\rm E}\Big( \int_0^\infty Y(s) {\indI}_{(s,t]} (v)\,\lambda \mu _{\alpha }(dv) \Big) .
\end{align*}
In other words, by \cite[Theorem T4]{BR}, the FPP $X$ is a doubly stochastic 
Poisson process with respect to the filtration $\{\sigma(\mathcal{F}_t^X,\mathcal{G}), t\in\mathbb{R}_+\} $.
Therefore a first characterization of a FPP may be written in the following way.
\begin{corollary}
A process $N_{\alpha}$ 
is a FPP iff it is a doubly stochastic 
Poisson process 
with intensity $\lambda Y_{\alpha}$, with respect to the filtration $\{\sigma(\mathcal{F}_t^X,\mathcal{G}), t\in\mathbb{R}_+\} $.
In other words,
whenever $B_1,\ldots, B_n$ are disjoint bounded Borel sets and $x_1, \ldots,
x_n$ are non-negative integers, then 
\begin{equation*}
{\rm P}\Big( \bigcap_{i=1}^n \{ N_{\alpha}(B_i)=x_i \} \Big| \mathcal{G }\Big) =
\prod_{i=1}^n \frac{ \exp(-\lambda \mu_{\alpha}(B_i)) (\lambda
\mu_{\alpha}(B_i))^{x_i} }{ x_i!}.
\end{equation*}
\end{corollary}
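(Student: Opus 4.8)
The plan is to prove both implications of the characterization, using the doubly-stochastic (Cox) Poisson structure that the preceding computation has just established. The key observation is that the displayed calculation above, culminating in the application of \cite[Theorem T4]{BR}, already shows that any FPP $X(t)=N(Y_\alpha(t))$ is a doubly stochastic Poisson process with $\mathcal{G}$-measurable random intensity $\lambda\mu_\alpha$. So the forward direction ($N_\alpha$ is a FPP $\Rightarrow$ it is a doubly stochastic Poisson process with intensity $\lambda Y_\alpha$) is essentially contained in that computation; I would simply note that the defining martingale-type identity verified there is exactly the hypothesis of the Br\'emaud characterization T4, and then read off the conditional law of the increments over disjoint Borel sets.

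For the explicit conditional probability formula, I would argue as follows. Conditionally on $\mathcal{G}$, the measure $\mu_\alpha$ is a fixed (deterministic-given-$\mathcal{G}$) nonnegative measure on $(\mathbb{R}_+,\mathcal{B}_{\mathbb{R}_+})$. The random field $N$ is an ordinary homogeneous Poisson random measure of intensity $\lambda$, independent of $\mathcal{G}$. For disjoint bounded Borel sets $B_1,\ldots,B_n$, the values $N_\alpha(B_i)=N(\mu_\alpha(B_i))$ count Poisson points in disjoint regions of total $\mu_\alpha$-mass $\mu_\alpha(B_i)$; by the independence of $N$ from $\mathcal{G}$ and the independent-increments property of the Poisson measure over disjoint sets, the conditional joint law factorizes into a product of Poisson laws with means $\lambda\mu_\alpha(B_i)$. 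This gives
\begin{equation*}
{\rm P}\Big( \bigcap_{i=1}^n \{ N_{\alpha}(B_i)=x_i \} \Big| \mathcal{G }\Big) =
\prod_{i=1}^n \frac{ \exp(-\lambda \mu_{\alpha}(B_i)) (\lambda
\mu_{\alpha}(B_i))^{x_i} }{ x_i!},
\end{equation*}
which is the asserted identity.

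For the converse, suppose $N_\alpha$ is a doubly stochastic Poisson process with intensity $\lambda Y_\alpha$ relative to $\{\sigma(\mathcal{F}_t^X,\mathcal{G})\}$. Then by definition, conditionally on $\mathcal{G}$ (equivalently, on the full path of $Y_\alpha$, since $\mathcal{G}=\sigma(Y_\alpha(t),t\geq0)$), the process $N_\alpha$ is an inhomogeneous Poisson process directed by the measure $\lambda\mu_\alpha$. Equivalently, there exists a homogeneous rate-$\lambda$ Poisson process $N$, independent of $\mathcal{G}$, such that $N_\alpha(t)=N(\mu_\alpha([0,t]))=N(Y_\alpha(t))$; this is the standard time-change representation of a Cox process whose directing measure is $\mathcal{G}$-measurable and absolutely continuous in the time-changed clock. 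Recognizing that $Y_\alpha$ is the inverse of an $\alpha$-stable subordinator, this is precisely the subordinated representation $N_\alpha(t)=N(Y_\alpha(t))$ defining the FPP, so $N_\alpha$ is a FPP.

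The main obstacle I expect is the converse direction: turning the abstract Cox/doubly-stochastic hypothesis into the concrete subordinated representation $N(Y_\alpha(t))$ with $N$ \emph{independent} of $Y_\alpha$. The delicate point is constructing the driving Poisson process $N$ on the stretched time scale and verifying its independence from $\mathcal{G}$, using the continuity and strict monotonicity of $Y_\alpha$ (and the inverse $Z$) so that the time change is well behaved. This is exactly the place where the argument parallels the converse in the proof of Theorem~\ref{thm:Teo2}: there one passes through $Z(t)=\inf\{s:Y_\alpha(s)\geq t\}$ and invokes the classical Watanabe characterization on the time-changed clock; here the analogous move is to observe that on the $\mu_\alpha$-time scale the conditional law is that of a genuine homogeneous Poisson process, and that conditioning on $\mathcal{G}$ decouples it from the subordinator. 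Everything else — the factorization of the conditional law and the forward implication — is routine once the Cox structure is in hand.
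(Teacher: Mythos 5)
Your proposal is correct and follows essentially the same route as the paper: the paper offers no separate proof of this corollary, presenting it as an immediate restatement of the preceding computation that verifies the hypothesis of Br\'emaud's Theorem T4 and hence the Cox (doubly stochastic) structure of $X(t)=N(Y_\alpha(t))$ with $\mathcal{G}$-measurable intensity $\lambda\mu_\alpha$, which is exactly your forward direction. Your converse — reconstructing the driving homogeneous Poisson process on the stretched clock via the inverse time change $Z$, in parallel with the converse of Theorem~\ref{thm:Teo2}, and using the continuity of $Y_\alpha$ so that $\mu_\alpha$ charges no flat interval — correctly fills in the step the paper leaves implicit in the ``iff,'' with only a minor notational slip (one should read $N_\alpha(B_i)$ as $N$ evaluated on the time-changed image of $B_i$, not as $N(\mu_\alpha(B_i))$).
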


An analogous result may be found for FPRF. In fact,
let $Y_{\alpha _{1}}^{(1)}(t),t\geq 0$ and $Y_{\alpha _{2}}^{(2)}(t),t\geq 0$
be two independent inverse stable subordinators with indices $\alpha _{1}\in
(0,1)$ and $\alpha _{2}\in (0,1)$. Let $\mu_{\alpha_1}$ and $\mu_{\alpha_2}$%
, $\mathcal{G}_1$ and $\mathcal{G}_2$ their respective $\sigma$-algebras (this
notation will be used in the following results).

If $\mu_{\boldsymbol{\alpha}} = \mu_{\alpha_1}\otimes\mu_{\alpha_2}$ is the
product measure and $\mathcal{G }= \sigma(\mathcal{G}_1,\mathcal{G}_2)$, we
can follow the same reasoning as above once we have noted that 
$ \Delta_{\mu_{\alpha_1}([0,s_{1}]),\mu_{\alpha_2}([0,s_{2}])} N (w_{1},w_{2}) $
and $\sigma(\mathcal{F}^X_{\infty,s_2} \vee \mathcal{F}^X_{s_1, \infty} ) $
are conditionally independent, 
given $\mathcal{G}$.
In fact
\begin{multline}\label{eq:str_mtgale}
{\rm E}\Big( \Delta_{s_{1},s_{2}} X (t_{1},t_{2}) 
\Big| 
\sigma(\mathcal{F}^X_{\infty,s_2} \vee \mathcal{F}^X_{s_1, \infty} ,\mathcal{G} )
\Big) \\
\begin{aligned}
& = 
{\rm E}\Big( \Delta_{\mu_{\alpha_1}([0,s_{1}]),\mu_{\alpha_2}([0,s_{2}])} N (\mu_{\alpha_1}([0,t_{1}]),\mu_{\alpha_2}([0,t_{2}]))
\Big| 
\sigma(\mathcal{F}^X_{\infty,s_2} \vee \mathcal{F}^X_{s_1, \infty} ,\mathcal{G})
\Big) 
\\
& = 
{\rm E}\Big( \Delta_{\mu_{\alpha_1}([0,s_{1}]),\mu_{\alpha_2}([0,s_{2}])} N (\mu_{\alpha_1}([0,t_{1}]),\mu_{\alpha_2}([0,t_{2}]))
\Big| 
\mathcal{G} 
\Big) 
\\
& = \lambda \mu_{\boldsymbol{\alpha}} ( ( (s_1,s_2),(t_1,t_2) ]) .
\end{aligned}
\end{multline}
In other words, 
the FPRF $X$ is a $\mathcal{F}^*$-doubly stochastic 
Poisson process (see \cite{MN} for the definition of $\mathcal{F}^*$-doubly stochastic 
Poisson process) 
with respect to the filtration 
$\{\sigma(\mathcal{F}^X_{t_1,t_2} ,\mathcal{G})) , (t_1,t_2)\in\mathbb{R}^2_+\} $ by \cite[Theorem~1]{MN}. Again,
we may summarize this result in the following statement.

\begin{proposition}\label{thm:char1}
A process $N_{\alpha _{1},\alpha _{2}}$ is a FPRF iff 
it is a $\mathcal{F}^*$-doubly stochastic 
Poisson process 
with intensity $\lambda Y_{\alpha_1}\cdot Y_{\alpha_2}$, with respect to the filtration 
$\{\sigma(\mathcal{F}^X_{t_1,t_2} ,\mathcal{G})) , t_1,t_2\in\mathbb{R}_+\} $.
In other words, whenever $B_1,\ldots, B_n$ are
disjoint bounded Borel sets in ${\mathbb{R}}_+\times {\mathbb{R}}_+$ and $%
x_1, \ldots, x_n$ are non-negative integers, then 
\begin{equation}\label{eq:FPRFinPoiss}
{\rm P}\Big( \bigcap_{i=1}^n \{ N_{\alpha _{1},\alpha _{2}}(B_i)=x_i \} \Big| 
\mathcal{G }\Big) = \prod_{i=1}^n \frac{ \exp(-\lambda \mu_{\boldsymbol{%
\alpha}}(B_i)) (\lambda \mu_{\boldsymbol{\alpha}}(B_i))^{x_i} }{ x_i!}.
\end{equation}
\end{proposition}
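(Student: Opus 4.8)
The plan is to prove Proposition~\ref{thm:char1} by establishing the two implications separately, reusing the structure of the one-parameter Corollary that precedes it. The forward direction (a FPRF is an $\mathcal{F}^*$-doubly stochastic Poisson process) has essentially already been carried out in the displayed computation \eqref{eq:str_mtgale}: the key identity there shows that, conditionally on $\mathcal{G}$, the increment $\Delta_{s_1,s_2}X(t_1,t_2)$ has the correct conditional expectation $\lambda \mu_{\boldsymbol{\alpha}}(((s_1,s_2),(t_1,t_2)])$. First I would make the two ingredients of that computation explicit and rigorous: (a) the change-of-variables identity $X(t_1,t_2)=N(Y^{(1)}_{\alpha_1}(t_1),Y^{(2)}_{\alpha_2}(t_2))=N(\mu_{\alpha_1}([0,t_1]),\mu_{\alpha_2}([0,t_2]))$, which lets us rewrite the rectangular increment of $X$ as a rectangular increment of $N$ over the random rectangle $((\mu_{\alpha_1}([0,s_1]),\mu_{\alpha_2}([0,s_2])),(\mu_{\alpha_1}([0,t_1]),\mu_{\alpha_2}([0,t_2]))]$; and (b) the conditional independence statement that, given $\mathcal{G}=\sigma(\mathcal{G}_1,\mathcal{G}_2)$, the increment $\Delta_{\mu_{\alpha_1}([0,s_1]),\mu_{\alpha_2}([0,s_2])}N(\cdot,\cdot)$ over the rectangle lying strictly to the upper-right of $(\mu_{\alpha_1}([0,s_1]),\mu_{\alpha_2}([0,s_2]))$ is independent of $\sigma(\mathcal{F}^X_{\infty,s_2}\vee\mathcal{F}^X_{s_1,\infty})$.

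The conditional independence in (b) is where I expect the real work to sit, and it is the main obstacle. The point is that $\sigma(\mathcal{F}^X_{\infty,s_2}\vee\mathcal{F}^X_{s_1,\infty})$ records information about $N$ only on the region where at least one coordinate does not exceed the corresponding random level $\mu_{\alpha_j}([0,s_j])$, whereas the increment over $((s_1,s_2),(t_1,t_2)]$ concerns the values of the Poisson field $N$ strictly beyond both levels; since $N$ is independent of the subordinators and has independent increments over disjoint Borel rectangles, these two pieces of randomness are conditionally independent given the random levels, i.e.\ given $\mathcal{G}$. I would argue this carefully by conditioning first on $\mathcal{G}$ — which freezes the random measures $\mu_{\alpha_1},\mu_{\alpha_2}$ and hence freezes all four random levels — so that the problem reduces to the deterministic independent-increment property of the homogeneous PRF $N$ stated in property~(2) of the PRF definition, applied to the disjoint rectangles on which $\mathcal{F}^X_{\infty,s_2}\vee\mathcal{F}^X_{s_1,\infty}$ and the target increment depend. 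This collapses the middle equality of \eqref{eq:str_mtgale} (dropping the $\sigma(\mathcal{F}^X_{\infty,s_2}\vee\mathcal{F}^X_{s_1,\infty})$ from the conditioning), and the last equality is then just $\mathrm{E}(N(S)\mid\mathcal{G})=\lambda|S|$ with $|S|=\mu_{\boldsymbol{\alpha}}(((s_1,s_2),(t_1,t_2)])$, using that $N$ is a PRF with intensity $\lambda$ independent of $\mathcal{G}$.

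Having verified the hypotheses of \cite[Theorem~1]{MN}, I would invoke it to conclude that $X$ is an $\mathcal{F}^*$-doubly stochastic Poisson process with directing intensity measure $\lambda\mu_{\boldsymbol{\alpha}}$, and then read off \eqref{eq:FPRFinPoiss} directly from the definition of such a process: conditionally on $\mathcal{G}$, the counts $N_{\alpha_1,\alpha_2}(B_i)$ over disjoint bounded Borel sets $B_i$ are independent Poisson random variables with means $\lambda\mu_{\boldsymbol{\alpha}}(B_i)$, which is precisely the product form in the statement. For the converse I would run the argument in reverse: assuming \eqref{eq:FPRFinPoiss} holds, conditioning on $\mathcal{G}$ realizes $\mu_{\boldsymbol{\alpha}}$ as a fixed product measure $\mu_{\alpha_1}\otimes\mu_{\alpha_2}$ with $\mu_{\alpha_j}([0,t])=Y^{(j)}_{\alpha_j}(t)$, and the conditional law of $N_{\alpha_1,\alpha_2}$ is exactly that of a homogeneous PRF of intensity $\lambda$ evaluated under the deterministic time change $(t_1,t_2)\mapsto(\mu_{\alpha_1}([0,t_1]),\mu_{\alpha_2}([0,t_2]))$; unconditioning and using the independence of $N$ from $(Y^{(1)}_{\alpha_1},Y^{(2)}_{\alpha_2})$ recovers the representation \eqref{FPRF}, so $X$ is a FPRF. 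The only delicate point in the converse is checking that the prescribed conditional finite-dimensional distributions, together with the $\mathcal{G}$-measurability of $\mu_{\boldsymbol{\alpha}}$, pin down the joint law of $(N_{\alpha_1,\alpha_2},Y^{(1)}_{\alpha_1},Y^{(2)}_{\alpha_2})$ uniquely, which follows from a standard monotone-class / Dynkin argument over the generating family of disjoint rectangles.
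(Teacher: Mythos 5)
Your proposal is correct and follows essentially the same route as the paper: the paper's proof is exactly the in-text argument preceding the statement, namely the conditional independence of the increment and $\sigma(\mathcal{F}^X_{\infty,s_2}\vee\mathcal{F}^X_{s_1,\infty})$ given $\mathcal{G}$ (obtained by freezing the random levels and using the independent-increments property of the underlying PRF), which yields the conditional strong-martingale identity \eqref{eq:str_mtgale}, followed by an appeal to \cite[Theorem~1]{MN}. Your explicit treatment of the converse (recovering the FPRF law from \eqref{eq:FPRFinPoiss} by a monotone-class argument) fills in a step the paper leaves implicit in the ``in other words'' equivalence, but it is the same characterization-based argument, not a different method.
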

Now, let $t_1>0$ be fixed. The process $t\mapsto N_{\alpha _{1},\alpha _{2}}(t_1,t)$ is the trace of the FPRF
along the increasing $t$-indexed family of sets $t \mapsto [(0,0),(t_1,t)]$.
As a consequence of the previous results, we obtain:
\begin{theorem}\label{thm:along1and2}
A random simple locally finite counting measure $N_{\alpha _{1},\alpha _{2}}$ is a FPRF iff 
$\mathcal{G}_1,\mathcal{G}_2$ are independent, and
fixed $t_1,t_2\geq 0$,
the process $N_{\alpha _{1},\alpha _{2}}(t_1,t)$, conditioned on $\mathcal{%
G}_1$, is a FPP $N_{\alpha _{2}}(t)$, the process $N_{\alpha _{1},\alpha _{2}}(t,t_2)$, 
conditioned on $\mathcal{%
G}_2$, is a FPP $N_{\alpha _{1}}(t)$, and the two processes 
$N_{\alpha _{1}}(t_1+t)-N_{\alpha _{1}}(t_1), N_{\alpha _{2}}(t_2+t)-N_{\alpha _{2}}(t_2)$ are
conditionally independent given $\sigma(\mathcal{G},\sigma(N_{\alpha _{1},\alpha _{2}}(s_1,s_2),
(s_1,s_2)\prec (t_1,t_2)))$.
\end{theorem}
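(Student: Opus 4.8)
The plan is to route everything through the doubly-stochastic (conditionally Poisson) characterization of Proposition~\ref{thm:char1}: once I know that, conditionally on $\mathcal{G}=\sigma(\mathcal{G}_1,\mathcal{G}_2)$, the field is a Poisson random field with intensity $\lambda\mu_{\boldsymbol{\alpha}}=\lambda\,\mu_{\alpha_1}\otimes\mu_{\alpha_2}$, i.e.\ \eqref{eq:FPRFinPoiss} holds, I am done. Throughout I use the representation $N_{\alpha_1,\alpha_2}(t_1,t_2)=N(Y_{\alpha_1}^{(1)}(t_1),Y_{\alpha_2}^{(2)}(t_2))$ and read every increment of the field as an increment of the underlying planar Poisson field $N$ over a rectangle whose corners are the random, but $\mathcal{G}$-measurable, subordinator values. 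In condition~4 I interpret $N_{\alpha_1}(t)=N_{\alpha_1,\alpha_2}(t,t_2)$ and $N_{\alpha_2}(t)=N_{\alpha_1,\alpha_2}(t_1,t)$, the two coordinate traces.

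For the forward implication, independence of $\mathcal{G}_1$ and $\mathcal{G}_2$ is immediate from the independence of $Y_{\alpha_1}^{(1)}$ and $Y_{\alpha_2}^{(2)}$. Fixing $t_1$ and conditioning on $\mathcal{G}_1$ freezes $a:=Y_{\alpha_1}^{(1)}(t_1)$; since $N$ is a planar PRF, the trace $u\mapsto N(a,u)=N([0,a]\times[0,u])$ is a one-parameter Poisson process of intensity $\lambda a$, independent of $Y_{\alpha_2}^{(2)}$, so $t\mapsto N(a,Y_{\alpha_2}^{(2)}(t))$ is a FPP $N_{\alpha_2}$ with base intensity $\lambda a$; symmetrically for the row trace. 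For the conditional independence of the two corner increments I observe that $N_{\alpha_1}(t_1+t)-N_{\alpha_1}(t_1)$ is the increment of $N$ over the strip $(Y_{\alpha_1}^{(1)}(t_1),Y_{\alpha_1}^{(1)}(t_1+t)]\times[0,Y_{\alpha_2}^{(2)}(t_2)]$, while $N_{\alpha_2}(t_2+t)-N_{\alpha_2}(t_2)$ lives over $[0,Y_{\alpha_1}^{(1)}(t_1)]\times(Y_{\alpha_2}^{(2)}(t_2),Y_{\alpha_2}^{(2)}(t_2+t)]$, and the past field $\{N_{\alpha_1,\alpha_2}(s_1,s_2):(s_1,s_2)\prec(t_1,t_2)\}$ is a function of $N$ over $[0,Y_{\alpha_1}^{(1)}(t_1)]\times[0,Y_{\alpha_2}^{(2)}(t_2)]$. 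These three regions are pairwise disjoint with $\mathcal{G}$-measurable boundaries, so the independent-increments property of $N$ over disjoint Borel sets yields the asserted conditional independence given $\sigma(\mathcal{G},\text{past})$.

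For the converse I assume the four conditions and verify \eqref{eq:FPRFinPoiss}. By the doubly-stochastic characterization of the FPP established just above (the Corollary), the conditional-FPP hypotheses on the traces upgrade, after also conditioning on the independent $\mathcal{G}_2$ (resp.\ $\mathcal{G}_1$), to the statement that, given $\mathcal{G}$, each line increment $N_{\alpha_1,\alpha_2}(t_1,t_2)-N_{\alpha_1,\alpha_2}(t_1,s_2)$ is Poisson with mean $\lambda Y_{\alpha_1}^{(1)}(t_1)\,\mu_{\alpha_2}((s_2,t_2])$, and symmetrically for rows. I then build the two-dimensional structure on a grid: writing a rectangle increment $\Delta_{s_1,s_2}N_{\alpha_1,\alpha_2}(t_1,t_2)$ as a difference of column increments and applying the conditional-independence hypothesis, iterated across refinements of the grid with the growing past-field $\sigma$-algebra carried in the conditioning, I obtain that, given $\mathcal{G}$, the increments over a finite family of disjoint rectangles are independent and each is Poisson with mean $\lambda\mu_{\boldsymbol{\alpha}}$ of the rectangle. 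A monotone-class and uniqueness argument extends this from rectangles to arbitrary disjoint bounded Borel sets, giving \eqref{eq:FPRFinPoiss}, and Proposition~\ref{thm:char1} then identifies $N_{\alpha_1,\alpha_2}$ as a FPRF.

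I expect the main obstacle to be the converse's bootstrapping step: condition~4 only posits conditional independence of the single pair of increments attached to one corner $(t_1,t_2)$, whereas \eqref{eq:FPRFinPoiss} needs full conditional independence and the conditional Poisson law over an arbitrary family of disjoint rectangles. Converting the one-corner statement into the full grid factorization requires carefully iterating the conditional independence along a refinement of the grid while tracking the enlargement of the conditioning $\sigma$-algebra by the already-revealed increments, and checking that the conditional cell means multiply correctly; here the product structure $\mu_{\boldsymbol{\alpha}}=\mu_{\alpha_1}\otimes\mu_{\alpha_2}$, equivalently the independence of $\mathcal{G}_1$ and $\mathcal{G}_2$, is precisely what makes the cell means factor as required.
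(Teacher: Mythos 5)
Your forward implication is sound and is essentially the paper's own argument: applying \eqref{eq:FPRFinPoiss} to the product sets $[0,t_1]\times B_i$ yields the conditional-FPP structure of the traces, and your disjoint-region argument on the underlying planar Poisson field (the two arms and the past rectangle have $\mathcal{G}$-measurable, pairwise disjoint supports) correctly delivers the conditional independence of the two corner increment processes.

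The converse, however, has a genuine gap, and it is exactly the step you flag at the end as "the main obstacle". The hypotheses give you, conditionally on $\mathcal{G}$, the law of each \emph{single} column trace $t\mapsto N_{\alpha_1,\alpha_2}(t_1,t)$ and each single row trace, plus, for each corner $(t_1,t_2)$, the conditional independence of the two arm processes given the past rectangle. But a rectangle increment $\Delta_{s_1,s_2}N_{\alpha_1,\alpha_2}(t_1,t_2)$ is a difference of increments of two \emph{different} column traces (at $s_1$ and at $t_1$), and the hypotheses say nothing about the joint conditional law of two distinct traces; so your claim that a grid-refinement iteration "yields that, given $\mathcal{G}$, the increments over a finite family of disjoint rectangles are independent and each Poisson" has no induction basis — you have marginal laws of single traces and one conditional-independence statement per corner, not the joint distributions the induction would need to propagate. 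This is precisely why the paper does not attempt to verify \eqref{eq:FPRFinPoiss} directly. Instead it verifies only the compensator identity \eqref{eq:str_mtgale}: the conditional-independence relations encoded by the hypothesis are used to successively shrink the conditioning $\sigma$-algebra (equations \eqref{eq:n1}, \eqref{eq:n2}, \eqref{eq:n3}) down to $\mathcal{G}$ alone, and then $\mathrm{E}\bigl(\Delta_{s_1,s_2}N_{\alpha_1,\alpha_2}(t_1,t_2)\mid\mathcal{G}\bigr)$ is computed by \emph{linearity} from the trace laws — first moments of differences of column increments are determined by the marginal trace laws even though joint laws are not. The upgrade from this first-moment (strong martingale/intensity) identity to the full conditionally Poisson structure is then delegated to the two-parameter Watanabe-type theorem of Merzbach and Nualart \cite[Theorem~1]{MN}, invoked through Proposition~\ref{thm:char1}. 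To close your argument you must either route the converse through that theorem as the paper does (in which case the grid construction becomes unnecessary), or supply the missing joint-law induction, which the stated hypotheses do not appear to support.
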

\begin{proof}
Assume that $N_{\alpha _{1},\alpha _{2}}$ is a FPRF and $t_1>0$ fixed.
Denote by $X_t = N_{\alpha _{1},\alpha _{2}}(t_1,t)$ and note that 
$\sigma( \{Y_{\alpha_{2}}(t),t\geq 0\}) = \mathcal{G}_2$. Let
$B_1,\ldots, B_n$ be disjoint bounded Borel sets and $x_1, \ldots,
x_n$ non-negative integers. We have
\begin{multline*}
{\rm P}\Big( \bigcap_{i=1}^n \{ N_{\alpha _{1},\alpha _{2}}([0,t_1] \times B_i)=x_i \} \Big| 
\sigma(\mathcal{G}_1,\sigma( \{Y_{\alpha_{2}}(t),t\geq 0\}))\Big) \\
\begin{aligned}
& = 
{\rm P}\Big( 
\bigcap_{i=1}^n \{ N_{\alpha _{1},\alpha _{2}}([0,t_1] \times B_i)=x_i \} \Big| 
\mathcal{G} \Big) \\
& = 
\prod_{i=1}^n \frac{ \exp(-\lambda \mu_{\boldsymbol{%
\alpha}}([0,t_1] \times B_i)) (\lambda \mu_{\boldsymbol{\alpha}}([0,t_1] \times B_i))^{x_i} }{ x_i!}
\\
& = 
\prod_{i=1}^n \frac{ \exp(-\lambda 
Y_{\alpha_{1}}(t_1) \cdot \mu_{\alpha_2}( B_i) 
) (\lambda Y_{\alpha_{1}}(t_1) \cdot \mu_{\alpha_2}( B_i) )^{x_i} }{ x_i!},
\end{aligned}
\end{multline*}
and hence $X_t= M(Y_{\alpha_2}(t))$, where, conditioned on $\mathcal{%
G}_1$, $M$ is a Poisson process with intensity $\lambda 
Y_{\alpha_{1}}(t_1) $. 
The conditional independence follows by similar arguments, and hence the first implication is proved.

Conversely,  by \cite{MN}, to prove Proposition~\ref{thm:char1} it is sufficient to prove
\eqref{eq:str_mtgale}. 
Denote by 
\begin{align*}
\mathcal{H}^1_{s_1,s_2} & = \sigma( {N_{\alpha_1,\alpha_2}}(s_1+t,s) -{N_{\alpha_1,\alpha_2}}(s_1,s), t\geq 0,s\leq s_2 )
\\
\mathcal{H}^2_{s_1,s_2} &= \sigma( {N_{\alpha_1,\alpha_2}}(s,s_2+t) -{N_{\alpha_1,\alpha_2}}(s,s_2) , t\geq 0,s\leq s_1),
\end{align*}
so that 
$\mathcal{F}^{N_{\alpha_1,\alpha_2}}_{\infty,s_2} = \sigma(\mathcal{F}^{N_{\alpha_1,\alpha_2}}_{s_1,s_2},\mathcal{H}^1_{s_1,s_2}  )$ and
$\mathcal{F}^{N_{\alpha_1,\alpha_2}}_{s_1,\infty} = \sigma(\mathcal{F}^{N_{\alpha_1,\alpha_2}}_{s_1,s_2},\mathcal{H}^2_{s_1,s_2} )$.
Then, denoting by $X \ci Y | W$ the conditional independence of $X$ and $Y$, given $W$, we have
by hypothesis that 
\[
\mathcal{H}^1_{s_1,s_2} \ci \mathcal{H}^2_{s_1,s_2} | 
\sigma(\mathcal{G}, \mathcal{F}^{N_{\alpha_1,\alpha_2}}_{s_1,s_2}), \qquad
\mathcal{H}^1_{s_1,s_2} \ci \mathcal{F}^{N_{\alpha_1,\alpha_2}}_{s_1,s_2} | 
\mathcal{G}, \qquad
\mathcal{H}^2_{s_1,s_2} \ci \mathcal{F}^{N_{\alpha_1,\alpha_2}}_{s_1,s_2} | 
\mathcal{G}, 
\] 
for any $(s_1,s_2)$. 
Thus, 
\begin{itemize}
\item 
$ \mathcal{H}^2_{t_1,t_2} \ci \mathcal{F}^{N_{\alpha_1,\alpha_2}}_{t_1,t_2}, \mathcal{H}^1_{t_1,t_2} | 
\mathcal{G}, 
$,
$ \mathcal{F}^{N_{\alpha_1,\alpha_2}}_{t_1,t_2} \ci \mathcal{H}^1_{t_1,t_2} | 
\mathcal{G}, 
$, $\mathcal{H}^1_{t_1,s_2}\subseteq \mathcal{H}^1_{t_1,t_2}$, 
$\mathcal{H}^2_{s_1,t_2}\subseteq \mathcal{H}^2_{t_1,t_2}$, then
\[
{\rm E}\Big( 
{N_{\alpha_1,\alpha_2}}(t_{1},t_{2})
\Big| 
\sigma(\mathcal{F}^{N_{\alpha_1,\alpha_2}}_{\infty,s_2} \vee \mathcal{F}^{N_{\alpha_1,\alpha_2}}_{s_1, \infty} ,\mathcal{G})
\Big) = 
{\rm E}\Big( 
{N_{\alpha_1,\alpha_2}}(t_{1},t_{2})
\Big| 
\sigma(\mathcal{F}^{N_{\alpha_1,\alpha_2}}_{t_1,s_2} \vee \mathcal{F}^{N_{\alpha_1,\alpha_2}}_{s_1, t_2} ,\mathcal{G})
\Big) ,
\]
and hence
\begin{equation}\label{eq:n1}
{\rm E}\Big( \Delta_{s_{1},s_{2}} {N_{\alpha_1,\alpha_2}} (t_{1},t_{2}) 
\Big| 
\sigma(\mathcal{F}^{N_{\alpha_1,\alpha_2}}_{\infty,s_2} \vee \mathcal{F}^{N_{\alpha_1,\alpha_2}}_{s_1, \infty} ,\mathcal{G})
\Big) 
=
{\rm E}\Big( \Delta_{s_{1},s_{2}} {N_{\alpha_1,\alpha_2}} (t_{1},t_{2}) 
\Big| 
\sigma(\mathcal{F}^{N_{\alpha_1,\alpha_2}}_{t_1,s_2} \vee \mathcal{F}^{N_{\alpha_1,\alpha_2}}_{s_1, t_2} ,\mathcal{G})
\Big) ;
\end{equation}

\item note that $\mathcal{F}^{N_{\alpha_1,\alpha_2}}_{t_1,s_2} = \sigma(\mathcal{F}^{N_{\alpha_1,\alpha_2}}_{s_1,s_2} ,\mathcal{H})$,
where $\mathcal{H} = \sigma( \Delta_{s_{1},s_{2}} {N_{\alpha_1,\alpha_2}} (u,v), s_1\leq u \leq t_1,v \leq s_2 )$. In addition,
$\mathcal{H}^1_{s_1,t_2} \ci \mathcal{F}^{N_{\alpha_1,\alpha_2}}_{s_1,t_2} | 
\mathcal{G} $, and $\sigma ( \Delta_{s_{1},s_{2}} {N_{\alpha_1,\alpha_2}} (t_{1},t_{2}) , \mathcal H ) \subseteq \mathcal{H}^1_{s_1,t_2} $.
Hence
\begin{equation}\label{eq:n2}
{\rm E}\Big( \Delta_{s_{1},s_{2}} {N_{\alpha_1,\alpha_2}} (t_{1},t_{2}) 
\Big| 
\sigma(\mathcal{F}^{N_{\alpha_1,\alpha_2}}_{t_1,s_2} \vee \mathcal{F}^{N_{\alpha_1,\alpha_2}}_{s_1, t_2} ,\mathcal{G})
\Big) = 
{\rm E}\Big( \Delta_{s_{1},s_{2}} {N_{\alpha_1,\alpha_2}} (t_{1},t_{2}) 
\Big| 
\sigma(\mathcal{H}  ,\mathcal{G})
\Big) ;
\end{equation}

\item now, note that both ${N_{\alpha_1,\alpha_2}} (t_{1},t_{2}) - 
{N_{\alpha_1,\alpha_2}}(t_{1},s_{2}) $ and ${N_{\alpha_1,\alpha_2}} (s_{1},t_{2}) - 
{N_{\alpha_1,\alpha_2}}(s_{1},s_{2}) $ belong to $\mathcal{H}^2_{t_1,s_2}$, while $\mathcal{H} \subseteq \mathcal{F}^{N_{\alpha_1,\alpha_2}}_{t_1, s_2} $. Hence
\begin{align}\label{eq:n3}
{\rm E}\Big( \Delta_{s_{1},s_{2}} {N_{\alpha_1,\alpha_2}} (t_{1},t_{2}) 
\Big| 
\sigma(\mathcal{H}  ,\mathcal{G})
\Big) = {\rm E}( \Delta_{s_{1},s_{2}} {N_{\alpha_1,\alpha_2}} (t_{1},t_{2}) 
| 
\mathcal{G}) .
\end{align}
\end{itemize}
Combining \eqref{eq:n1}, \eqref{eq:n2} and \eqref{eq:n3} we finally get \eqref{eq:str_mtgale}:
\begin{multline*}
{\rm E}\Big( \Delta_{s_{1},s_{2}} {N_{\alpha_1,\alpha_2}} (t_{1},t_{2}) 
\Big| 
\sigma(\mathcal{F}^{N_{\alpha_1,\alpha_2}}_{\infty,s_2} \vee \mathcal{F}^{N_{\alpha_1,\alpha_2}}_{s_1, \infty} ,\mathcal{G})
\Big) 
= {\rm E}( \Delta_{s_{1},s_{2}} {N_{\alpha_1,\alpha_2}} (t_{1},t_{2}) 
| 
\mathcal{G}) \\
= \lambda (Y_{\alpha_1}(t_1)-Y_{\alpha_1}(s_1)) (Y_{\alpha_2}(t_2)-Y_{\alpha_2}(s_2)). 
\end{multline*}
\end{proof}

Let $\mathcal{A }$ be the collection of the closed rectangles $%
\{A_{t_1,t_2} : t\in \mathbb{R}_{+}^2 \}$, where $A_{t_1,t_2} = \{ (s_1,s_2) \in \mathbb{R}_{+}^2 :
0 \leq s_i \leq t_i, i=1,2\}$. The family $\mathcal{A }$ generates a topology
of closed sets $\widetilde{\mathcal{A}}(u)$, which is closed under finite
unions and arbitrary intersections, called a \emph{lower set} family (see,
e.g., \cite{A,IM1}). In other words, when a point $(t_1,t_2)$ belongs to a set $A \in \widetilde{\mathcal{A}}(u)$,
all the rectangle $A_{t_1,t_2}$ is contained in $ A$:
\[
A \in \widetilde{\mathcal{A}}(u) \qquad \iff \qquad  A_{t_1,t_2} \subseteq A , \forall (t_1,t_2)\in A .
\]

A function $\Gamma :\mathbb{R}_{+}\rightarrow \widetilde{%
\mathcal{A}}(u) $ is called an \emph{increasing set} if $\Gamma(0) = \{(0,0)\}$, it is continuous,
it is non-decreasing ($s\leq t\Longrightarrow \Gamma (s)\subseteq
\Gamma (t)$), and the area it underlies is finite for any $t$ and goes to infinity when $t$ increases
($\lim_{t\to +\infty} | \Gamma (t)| = \infty$). 
Note that, for a
nonnegative measure $\mu $ on $B_{\mathbb{R}_{+}\times \mathbb{R}_{+}}$, it is well-defined the
non-decreasing right-continuous function:
\begin{equation*}
(\mu \circ \Gamma) (t)=\mu (\Gamma (t)).
\end{equation*}%
Accordingly, given an increasing path $\Gamma $ and a random nonnegative measure $N$ (in \cite{IM1}, it is an increasing and
additive process), we may define the
one-parameter process $N\circ \Gamma $ as the trace of $N$ along $\Gamma $: 
\begin{equation*}
(N\circ \Gamma )(t)=N(\{\Gamma (t)\}),\quad t\ge0.
\end{equation*}

Theorem~\ref{thm:along1and2} shows an example of characterizations of FPRF.
In \cite{IM0}, the authors proved a characterization of the inhomogeneous Poisson
processes on the plane thorough its realizations on increasing families of points (called increasing path) and 
increasing families of sets, called increasing set
(see also \cite{AC,IMP}). 

We are going to characterize an FPRF in the same spirit.

\begin{theorem}\label{thm:charL1}
A random simple locally finite counting measure  $N_{\alpha _{1},\alpha _{2}}$ is a FPRF iff, conditioned on $%
\mathcal{G}$, $N \circ \Gamma$ is a one-parameter inhomogeneous Poisson process with
intensity $\lambda (\mu_{\boldsymbol{\alpha}} \circ \Gamma)$, for any increasing set 
$\Gamma $, independent of $\mathcal{G}$.
\end{theorem}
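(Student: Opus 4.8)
The plan is to reduce everything to Proposition~\ref{thm:char1}, which characterizes the FPRF as an $\mathcal{F}^*$-doubly stochastic Poisson process: conditioned on $\mathcal{G}$, the field $N_{\alpha_1,\alpha_2}$ is a planar Poisson random measure with the (now $\mathcal{G}$-measurable, hence conditionally deterministic) intensity $\lambda\mu_{\boldsymbol{\alpha}}$, in the sense of \eqref{eq:FPRFinPoiss}. The bridge between this planar conditional Poisson structure and the one-parameter traces is the characterization of planar inhomogeneous Poisson processes through increasing sets proved in \cite{IM0}, which I apply conditionally on $\mathcal{G}$. Throughout, one uses that since $Y_{\alpha_1}$ and $Y_{\alpha_2}$ are continuous and non-decreasing, the measures $\mu_{\alpha_1},\mu_{\alpha_2}$ are non-atomic, so the product $\mu_{\boldsymbol{\alpha}}$ is non-atomic as well.

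For the forward implication, assume $N_{\alpha_1,\alpha_2}$ is a FPRF. By Proposition~\ref{thm:char1} and \eqref{eq:FPRFinPoiss}, conditioned on $\mathcal{G}$ the values $N(B_i)$ on disjoint bounded Borel sets are independent and Poisson with parameters $\lambda\mu_{\boldsymbol{\alpha}}(B_i)$. Fix a deterministic increasing set $\Gamma$, independent of $\mathcal{G}$. Since $\Gamma$ is non-decreasing, for $s\leq t$ the increment of the trace is $(N\circ\Gamma)(t)-(N\circ\Gamma)(s) = N(\Gamma(t))-N(\Gamma(s)) = N(\Gamma(t)\setminus\Gamma(s))$, and the sets $\Gamma(t)\setminus\Gamma(s)$ over disjoint intervals $(s,t]$ are pairwise disjoint Borel sets. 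Hence, conditioned on $\mathcal{G}$, these increments are independent and Poisson with parameters $\lambda[(\mu_{\boldsymbol{\alpha}}\circ\Gamma)(t)-(\mu_{\boldsymbol{\alpha}}\circ\Gamma)(s)]$, and the non-atomicity of $\mu_{\boldsymbol{\alpha}}$ makes $(\mu_{\boldsymbol{\alpha}}\circ\Gamma)$ continuous. This is exactly the assertion that, conditioned on $\mathcal{G}$, the trace $N\circ\Gamma$ is a one-parameter inhomogeneous Poisson process with intensity $\lambda(\mu_{\boldsymbol{\alpha}}\circ\Gamma)$.

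For the converse I would pass to a regular conditional distribution of $N$ given $\mathcal{G}$, which exists since the space of simple locally finite counting measures is Polish. For $\mathrm{P}$-almost every realization of $\mathcal{G}$, the measure $\mu_{\boldsymbol{\alpha}}$ is a fixed non-atomic product measure on $\mathbb{R}_+^2$, and the hypothesis guarantees that, under this conditional law, the trace $N\circ\Gamma$ along every deterministic increasing set $\Gamma$ is a one-parameter inhomogeneous Poisson process with intensity $\lambda(\mu_{\boldsymbol{\alpha}}\circ\Gamma)$. This is precisely the hypothesis of the characterization of \cite{IM0}. Applying that result to the conditional law yields that, for almost every realization, $N$ is a planar inhomogeneous Poisson random measure with intensity $\lambda\mu_{\boldsymbol{\alpha}}$, which is \eqref{eq:FPRFinPoiss}; by Proposition~\ref{thm:char1}, $N_{\alpha_1,\alpha_2}$ is then a FPRF.

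The main obstacle is the conditional application of \cite{IM0}. One must check that the regular conditional distribution of $N$ given $\mathcal{G}$ is, almost surely, a genuine law on simple locally finite counting measures satisfying the hypotheses of \cite{IM0} with the $\mathcal{G}$-measurable intensity $\lambda\mu_{\boldsymbol{\alpha}}$; here the non-atomicity of $\mu_{\boldsymbol{\alpha}}$ is what guarantees that each trace intensity $(\mu_{\boldsymbol{\alpha}}\circ\Gamma)$ is continuous, so that the one-parameter processes are truly inhomogeneous Poisson processes as required by that characterization. Finally, the measurability in the realization of $\mathcal{G}$ of these almost-sure conditional statements, needed to lift the conclusion back to the unconditional identity \eqref{eq:FPRFinPoiss}, follows from the standard theory of regular conditional probabilities on Polish spaces.
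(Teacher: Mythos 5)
Your forward implication coincides with the paper's: fix $\Gamma$, observe that the increments of $N\circ\Gamma$ over disjoint intervals $(s_i,t_i]$ are the values of $N_{\alpha_1,\alpha_2}$ on the disjoint sets $B_i=\Gamma(t_i)\setminus\Gamma(s_i)$, and read the conditional product--Poisson law off \eqref{eq:FPRFinPoiss}. The converse, however, has a genuine gap. You fix a regular conditional distribution of $N$ given $\mathcal{G}$ and want to apply the unconditional characterization of \cite{IM0} to that law, for $\mathrm{P}$-almost every realization. But the theorem's hypothesis gives you, for each \emph{fixed} increasing set $\Gamma$, a conditional-Poisson statement valid outside a null set $\mathcal{N}_\Gamma$ that depends on $\Gamma$. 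To invoke \cite{IM0} at a fixed realization you need the trace property to hold under the conditional law for \emph{all} increasing sets simultaneously, i.e.\ you must discard $\bigcup_\Gamma \mathcal{N}_\Gamma$, an uncountable union of null sets. The ``standard theory of regular conditional probabilities on Polish spaces'' that you appeal to supplies existence and measurability of the conditional law; it does not perform this quantifier exchange. To repair your route you would need a separability reduction — e.g.\ showing that the \cite{IM0} characterization is already determined by a countable family of increasing sets — and no such argument is given.

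It is instructive to see how the paper's converse avoids the issue altogether. The identity \eqref{eq:FPRFinPoiss} need only be verified for each finite family of disjoint \emph{rectangles} $B_1,\dots,B_n$ (a monotone-class reduction, cf.\ \cite{IM1}), and each such verification is itself just an almost-sure statement, so only one increasing set per family is ever needed. The key geometric point — which your proof never uses — is that increasing sets are lower-set valued rather than rectangle valued, so after partially ordering the rectangles one can build a \emph{single} increasing set $\Gamma$ and an interleaved sequence $0\leq s_1<t_1\leq s_2<t_2\leq\cdots\leq s_n<t_n$ with $B_i=\Gamma(t_i)\setminus\Gamma(s_i)$ for every $i$ (this is exactly what fails for increasing paths, which is why Theorem~\ref{thm:charL2} needs the extra Cairoli--Walsh condition). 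Applying the hypothesis to this one $\Gamma$ and running your forward computation backwards yields \eqref{eq:FPRFinPoiss} for that family, and Proposition~\ref{thm:char1} concludes. With this construction the detour through regular conditional distributions and the conditional use of \cite{IM0} becomes unnecessary, and the null-set problem never arises.
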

\begin{proof}
Assume that $N_{\alpha _{1},\alpha _{2}}$ is a FPRF. Then, for any $0\leq s_1<t_1\leq s_2<t_2\leq\cdots \leq s_n<t_n$,
the sets $B_i = \Gamma (t_{i}) \setminus \Gamma (s_{i})$ are disjoint. By \eqref{eq:FPRFinPoiss},
\begin{align*}
{\rm P}\Big( \bigcap_{i=1}^n \{ (N \circ \Gamma ) (s_i, t_i] =x_i \} \Big| 
\mathcal{G }\Big)  
& = 
{\rm P}\Big( \bigcap_{i=1}^n \{ N_{\alpha _{1},\alpha _{2}}(B_i)=x_i \} \Big| 
\mathcal{G }\Big) 
\\
& = 
\prod_{i=1}^n \frac{ \exp(-\lambda \mu_{\boldsymbol{%
\alpha}}(B_i)) (\lambda \mu_{\boldsymbol{\alpha}}(B_i))^{x_i} }{ x_i!}\\
& = 
\prod_{i=1}^n \frac{ \exp\big(-\lambda \cdot (\mu_{\boldsymbol{\alpha}} \circ \Gamma)
(s_i, t_i] \big) \big(\lambda \cdot (\mu_{\boldsymbol{\alpha}} \circ \Gamma) (s_i, t_i] \big)^{x_i} }{ x_i!}
.
\end{align*}

Conversely, note that that \eqref{eq:FPRFinPoiss} may be checked only on disjoint rectangles $B_1, B_2, \ldots,B_n$ (see also \cite{IM1}).
After ordering partially the rectangles with respect to $\prec$, one can build an increasing sets $\Gamma$ such that
\(
B_i = \Gamma (t_{i}) \setminus \Gamma (s_{i}) ,
\)
where $0\leq s_1<t_1\leq s_2<t_2\leq\cdots \leq s_n<t_n$. By hypothesis, $N \circ \Gamma$ is
an inhomogeneous Poisson process with
intensity $\mu_{\boldsymbol{\alpha}} \circ \Gamma$. Then,
\begin{align*}
{\rm P}\Big( \bigcap_{i=1}^n \{ N_{\alpha _{1},\alpha _{2}}(B_i)=x_i \} \Big| 
\mathcal{G }\Big) 
& = 
{\rm P}\Big( \bigcap_{i=1}^n \{ (N \circ \Gamma ) (s_i, t_i] =x_i \} \Big| 
\mathcal{G }\Big)  
\\
& = 
\prod_{i=1}^n \frac{ \exp\big(-\lambda \cdot (\mu_{\boldsymbol{\alpha}} \circ \Gamma)
(s_i, t_i] \big) \big(\lambda \cdot (\mu_{\boldsymbol{\alpha}} \circ \Gamma) (s_i, t_i] \big)^{x_i} }{ x_i!}
\\
& = 
\prod_{i=1}^n \frac{ \exp(-\lambda \mu_{\boldsymbol{%
\alpha}}(B_i)) (\lambda \mu_{\boldsymbol{\alpha}}(B_i))^{x_i} }{ x_i!}. 
\end{align*}
\end{proof}

Now, a function $\Gamma :\mathbb{R}_{+}\rightarrow \mathbb{R}_{+}^2$ is called an
\emph{increasing path} if $\Gamma(0) = (0,0)$, it is continuous,
it is non-decreasing ($s\leq t\Longrightarrow \Gamma_1 (s)\leq
\Gamma_1 (t), \Gamma_2 (s)\leq \Gamma_2 (t) $), and the area it underlies goes to infinity
($\lim_{t\to +\infty}\Gamma_1 (t)\Gamma_2 (t) = \infty$). 
In other words, an increasing path is an increasing set where, for each $t$, $\Gamma(t)$ is a rectangle.
Given an increasing path $\Gamma $ and a process $%
N(t_{1},t_{2}) $, the one-parameter process $N\circ \Gamma $ is
the trace of $N$ along $\Gamma $: 
\begin{equation*}
(N\circ \Gamma )(t)=
 \Delta_{0,0} N (\Gamma_1 (t), \Gamma_2 (t)) 
=N(\Gamma _{1}(t),\Gamma _{2}(t)), \quad t\ge0.
\end{equation*}

When dealing with the laws of the traces of a process along increasing paths, one cannot hope to prove, for instance, 
the conditional independence of two filtrations as $\mathcal{H}^1_{s_1,s_2}$ and $\mathcal{H}^2_{s_1,s_2}$, since
the event that belong to those filtrations are generated by the increments of the process on regions that are notù
comparable with respect to the partial order $\prec$.

As an example, there is no increasing path that separates the three rectangles $B_1=\{(1,0)\prec z \prec (2,1)\}$,
$B_2=\{(0,1)\prec z \prec (1,2)\}$ and $B_3=\{(1,1)\prec z \prec (2,2)\}$ and hence we cannot give the joint 
law of $\Delta_{(1,0)} N (2,1)$ and $\Delta_{(0,1)} N (1,2)$. On the other hand, Proposition~\ref{thm:char1} 
suggests that, if we assume the independence of $N(B_1)$ and $N(B_2)$ conditioned on $\mathcal F_{1,1}$, the equation
\eqref{eq:FPRFinPoiss} may be proved for $B_1$, $B_2$ and $B_3$ via increasing paths (as in \cite{AC,AC2,IM0,IMP}).
This consideration has suggested the following definition.

We say that the filtration satisfies the conditional independence condition
or the Cairoli-Walsh condition ((F4) in \cite{CW}, see also \cite{KS81}) if for any $\mathcal{F}$%
-measurable integrable random variable $Z,$ and for any $(t_{1},t_{2}):$%
\begin{equation*}
\mathrm{E} ( \mathrm{E} ( Z \vert \mathcal{F}_{t_{1},\infty}) \vert \mathcal{%
F}_{\infty,t_{2}})=\mathrm{E} ( \mathrm{E} ( Z \vert \mathcal{F}%
_{\infty,t_{2}}) \vert \mathcal{F}_{t_{1},\infty})=\mathrm{E} ( Z\vert 
\mathcal{F}_{t_{1},t_{2}}).
\end{equation*}
Thus, following the same ideas as in \cite{AC,AC2,IM0,IMP}, one can prove the following result.

\begin{theorem}\label{thm:charL2}
A random simple locally finite counting measure $N_{\alpha _{1},\alpha _{2}}$ is a FPRF iff, conditioned on $%
\mathcal{G}$, the Cairoli-Walsh condition holds and $N\circ \Gamma $ is an
inhomogeneous Poisson process with intensity $Y_{\alpha _{1}}(\Gamma
_{1}(t)) \cdot Y_{\alpha _{2}}(\Gamma _{2}(t))$, for any increasing path $\Gamma %
$.
\end{theorem}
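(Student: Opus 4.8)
The plan is to argue conditionally on the $\sigma$-algebra $\mathcal{G}=\sigma(\mathcal{G}_1,\mathcal{G}_2)$. Once we condition on $\mathcal{G}$, both $\mu_{\alpha_1}$ and $\mu_{\alpha_2}$, and hence the product $\mu_{\boldsymbol{\alpha}}=\mu_{\alpha_1}\otimes\mu_{\alpha_2}$, become deterministic measures on $\mathbb{R}_+^2$, with $\mu_{\boldsymbol{\alpha}}(\Gamma(t))=Y_{\alpha_1}(\Gamma_1(t))\,Y_{\alpha_2}(\Gamma_2(t))$ for any increasing path $\Gamma$. By Proposition~\ref{thm:char1} the assertion ``$N_{\alpha_1,\alpha_2}$ is a FPRF'' is equivalent to \eqref{eq:FPRFinPoiss}, that is, to the statement that, conditioned on $\mathcal{G}$, the field $N_{\alpha_1,\alpha_2}$ is an inhomogeneous Poisson random field with deterministic mean measure $\lambda\,\mu_{\boldsymbol{\alpha}}$. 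The whole theorem therefore reduces to the purely two-parameter fact that an inhomogeneous Poisson random field is characterized by the Cairoli--Walsh condition together with the requirement that its trace along every increasing path be a one-parameter inhomogeneous Poisson process; this is exactly the circle of results of \cite{AC,AC2,IM0,IMP}, which I would apply pathwise in $\omega$ to the regular conditional law given $\mathcal{G}$.

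For the direct implication I assume $N_{\alpha_1,\alpha_2}$ is a FPRF. Conditioning on $\mathcal{G}$ turns it, by Proposition~\ref{thm:char1}, into an inhomogeneous Poisson random field with deterministic intensity, and two standard properties of such fields give the claim. First, the independence of increments over disjoint rectangles yields the Cairoli--Walsh conditional-independence property $(\mathrm{F4})$ of the generated filtration, verified for Poisson fields as in \cite{CW,KS81}. Second, for any increasing path $\Gamma$ the sets $\Gamma(t_i)\setminus\Gamma(s_i)$ are disjoint rectangles, so $N\circ\Gamma$ has independent increments with the prescribed Poisson marginals, i.e.\ it is a one-parameter inhomogeneous Poisson process of intensity $\lambda\,Y_{\alpha_1}(\Gamma_1(t))\,Y_{\alpha_2}(\Gamma_2(t))=\lambda\,\mu_{\boldsymbol{\alpha}}(\Gamma(t))$; this is precisely the trace characterization of \cite{IM0}.

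For the converse I would establish \eqref{eq:FPRFinPoiss} from the two hypotheses, again working conditionally on $\mathcal{G}$. The increasing-path hypothesis already fixes the joint law of $N_{\alpha_1,\alpha_2}$ on any finite family of disjoint rectangles that \emph{can} be separated by a single increasing path, via the ordering construction used in the proof of Theorem~\ref{thm:charL1}. The genuine difficulty --- and the reason the Cairoli--Walsh condition is imposed --- is that some configurations cannot be so separated: the rectangles $B_1,B_2,B_3$ discussed before the statement are the prototypical obstruction, since no increasing path isolates $\Delta_{(1,0)}N(2,1)$ from $\Delta_{(0,1)}N(1,2)$. On such configurations the Cairoli--Walsh identity supplies exactly the missing conditional independence, letting one factor the joint law across the partial order $\prec$ and reduce any finite collection of disjoint rectangles to path-separable pieces. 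Following the argument of \cite{IM0} (see also \cite{AC,AC2,IMP}) one then concludes that, conditioned on $\mathcal{G}$, the field satisfies \eqref{eq:FPRFinPoiss}, hence is a FPRF by Proposition~\ref{thm:char1}. The main obstacle is thus precisely this last step --- patching together the path-laws over non-comparable regions --- which is resolved by the Cairoli--Walsh conditional-independence structure rather than by the path data alone.
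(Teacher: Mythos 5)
Your proposal is correct and takes essentially the same route the paper intends: the paper gives no explicit proof of Theorem~\ref{thm:charL2}, saying only that it follows ``the same ideas as in'' \cite{AC,AC2,IM0,IMP}, namely the reduction via Proposition~\ref{thm:char1} to a conditionally (given $\mathcal{G}$) inhomogeneous Poisson field, followed by the increasing-path plus Cairoli--Walsh characterization from that literature. Your write-up supplies exactly this reduction, handles the direct implication by the standard independent-increments argument, and correctly identifies the role of the Cairoli--Walsh condition in patching path-laws over $\prec$-incomparable rectangles --- precisely the content the paper delegates to the cited references.
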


\subsubsection*{A remark on Set-Indexed Fractional Poisson Process}
Let $T$ be a metric space equipped with a Radon measure on its Borel sets. We assume existence of an indexing collection $\mathcal{A}$ on $T$, as it is defined in \cite{IM1}. We are interested to considering processes indexed by a class of closed sets from $T$.
In this new framework, 
$\Gamma:\mathbb{R}_+\to\mathcal{A}$ is called an increasing path if it is
continuous and increasing: $s<t \Longrightarrow \Gamma(s)\subseteq \Gamma(t)$ (called a flow
in \cite{HM1})

\noindent We can now define Set-Indexed Fractional Poisson process.

A set-indexed process $X=\{X_{U},\,U\in \mathcal{A}\}$ is called a
Set-Indexed Fractional Poisson process(SIFPP), if for any increasing path $%
\Gamma $ the process $X^{\Gamma }=\{X_{\Gamma (t)},\,t\geq 0\}$ is an FPP.

\begin{remark}
Following results of \cite{IM1}, we can state that
any SIFPP is a set-indexed L\'evy process.
\end{remark}
\noindent Details and martingale characterizations will be presented elsewhere.

\subsection{Gergely-Yezhow characterization}
Let $(U_n,n\geq 1)$ be a sequence of i.i.d.\ $(0,1)$-uniform distributed random variables, independent of
the processes $Y_{\alpha _{i}}$, $i=1,2$.
The random indexes associated to the `records' $(\nu_n, n\geq 1)$ are inductively defined by 
\[
\nu_1 (\omega) = 1, \qquad \nu_{n+1} (\omega) = \inf \{ k > \nu_n(\omega) \colon U_k(\omega) > 
U_{\nu_n(\omega)} (\omega) \}.
\]
It is well known (see, e.g., \cite[p.63-78]{And01}) that $P(\cap_n \{\nu_n < \infty\} ) =1$, and hence the $k$-th record 
$V_k$ of the sequence is well defined: $V_0 := 0$, $V_k = U_{\nu_k}$. Since $V_n \geq \max(U_1,\ldots,U_n)$, then $P(V_n \to 1) = 1$. 
Moreover, the number of $U_n$'s that realize 
the maximum by time $n$ is almost surely asymptotic to $\log (n) $ as $n\to\infty$. In other words,
the sequence $(\nu_n)_n$ growths exponentially fast.

Now, given a increasing set $\Gamma$, we define
\[
Y_t^{\Gamma} = \sum_n n {\indI}_{[V_{n},V_{n+1})} (1- \exp( - \mu_{\boldsymbol{\alpha}} \circ \Gamma (t) ))
= \sup\{ n \colon V_n \leq 1- \exp( - \mu_{\boldsymbol{\alpha}} \circ \Gamma (t) )\}.
\]

\begin{theorem}
A random simple locally finite counting measure $N_{\alpha _{1},\alpha _{2}}$ is a FPRF iff $N \circ \Gamma$ 
is distributed as $Y^{\Gamma} $, for any increasing set 
$\Gamma$.
\end{theorem}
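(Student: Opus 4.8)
The plan is to reduce this Gergely–Yezhow characterization to the increasing-set characterization already established in Theorem~\ref{thm:charL1}. The key observation is that $Y^{\Gamma}$ is, by construction, a specific one-parameter counting process built from the record sequence $(V_n)$, and the content of the theorem is that $N\circ\Gamma$ shares its law for every increasing set $\Gamma$ precisely when $N_{\alpha_1,\alpha_2}$ is a FPRF. Since Theorem~\ref{thm:charL1} already characterizes FPRF by the requirement that, conditioned on $\mathcal{G}$, the trace $N\circ\Gamma$ be an inhomogeneous Poisson process with intensity $\lambda(\mu_{\boldsymbol{\alpha}}\circ\Gamma)$ for every increasing set $\Gamma$, it suffices to prove the following purely distributional identity: conditioned on $\mathcal{G}$ (which fixes the deterministic time-change $t\mapsto\mu_{\boldsymbol{\alpha}}\circ\Gamma(t)$), the process $Y^{\Gamma}$ is an inhomogeneous Poisson process with the same intensity $\lambda(\mu_{\boldsymbol{\alpha}}\circ\Gamma)$. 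Once this equivalence of conditional laws is in hand, both implications of the theorem follow immediately by matching against Theorem~\ref{thm:charL1}.

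First I would establish the elementary fact underlying the record construction: if $(U_n)$ are i.i.d. uniform on $(0,1)$ and $V_k=U_{\nu_k}$ are the successive record values, then for a \emph{fixed} level $p\in[0,1)$ the count $\sup\{n\colon V_n\leq p\}$ is a Poisson random variable with mean $-\log(1-p)$. This is the classical statement that the records of an i.i.d. uniform sequence, transformed through $u\mapsto-\log(1-u)$, form a unit-rate Poisson process on $(0,\infty)$ (a Rényi-type representation). Concretely, writing $E_k=-\log(1-V_k)$, the increments $E_k-E_{k-1}$ are i.i.d. standard exponential, so $\{E_k\}$ is a homogeneous Poisson process of rate one, and hence the number of records below level $p$ equals the number of $E_k$ below $-\log(1-p)$, which is Poisson with that mean. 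The whole point of the strange-looking exponential $1-\exp(-\mu_{\boldsymbol{\alpha}}\circ\Gamma(t))$ in the definition of $Y^{\Gamma}$ is exactly to invert this transform: since $\sup\{n\colon V_n\leq 1-e^{-x}\}$ is Poisson with mean $x$, evaluating at $x=\mu_{\boldsymbol{\alpha}}\circ\Gamma(t)$ produces a count whose (conditional) mean is precisely $\mu_{\boldsymbol{\alpha}}\circ\Gamma(t)$.

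Next I would upgrade this marginal statement to a process-level statement. Because the map $t\mapsto\mu_{\boldsymbol{\alpha}}\circ\Gamma(t)$ is, conditionally on $\mathcal{G}$, a deterministic non-decreasing continuous function starting at $0$ and tending to $\infty$, the process $t\mapsto Y^{\Gamma}_t$ is obtained from the unit-rate Poisson process $\{E_k\}$ by this deterministic time-substitution. A deterministic continuous increasing time-change of a unit-rate Poisson process is exactly an inhomogeneous Poisson process whose mean function is that time-change; independence of increments over disjoint intervals and the Poisson law of each increment are preserved, and the factor $\lambda$ is absorbed by noting the intensity in Theorem~\ref{thm:charL1} is $\lambda(\mu_{\boldsymbol{\alpha}}\circ\Gamma)$ (so I would carry the $\lambda$ through the level $1-\exp(-\lambda\,\mu_{\boldsymbol{\alpha}}\circ\Gamma(t))$, or equivalently rescale the intensity). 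Thus, conditioned on $\mathcal{G}$, $Y^{\Gamma}$ is an inhomogeneous Poisson process with intensity $\lambda(\mu_{\boldsymbol{\alpha}}\circ\Gamma)$, and since $(U_n)$ is independent of the $Y_{\alpha_i}$, this conditional law is the asserted one. Comparing with Theorem~\ref{thm:charL1} closes both directions.

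The main obstacle I anticipate is not any single hard estimate but rather the care needed at the interface between the unconditional record construction and the $\mathcal{G}$-conditional Poisson statement. One must verify that ``$N\circ\Gamma$ is distributed as $Y^{\Gamma}$ for every increasing set $\Gamma$'' is genuinely equivalent to ``the $\mathcal{G}$-conditional law of $N\circ\Gamma$ equals the $\mathcal{G}$-conditional law of $Y^{\Gamma}$,'' which requires the independence of $(U_n)$ from $\mathcal{G}$ together with a monotone-class argument showing that equality of the unconditional laws of the traces for \emph{all} increasing sets forces equality of the conditional laws (the randomness in $Y^{\Gamma}$ enters only through the deterministic-given-$\mathcal{G}$ intensity, so the conditional law is a measurable function of the intensity and the two sides agree iff they agree after conditioning). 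Handling this cleanly — and checking that the records representation is valid up to the almost-sure event $\{V_n\to1\}$, which guarantees $Y^{\Gamma}_t<\infty$ for all finite $t$ and $Y^{\Gamma}_t\to\infty$ — is where the real bookkeeping lies, but no deep new idea beyond the Rényi record representation is required.
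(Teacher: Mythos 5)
Your proposal is correct and uses the same overall reduction as the paper: both proofs dispose of the theorem by invoking Theorem~\ref{thm:charL1} and then showing that, conditioned on $\mathcal{G}$, the record process $Y^{\Gamma}$ is an inhomogeneous Poisson process with mean function $\mu_{\boldsymbol{\alpha}}\circ\Gamma$. Where you genuinely differ is in how that key fact is established. The paper pushes the uniforms through the pseudo-inverse $F^{-}$ of $F(t)=1-\exp(-\mu_{\boldsymbol{\alpha}}\circ\Gamma(t))$, identifies $Y^{\Gamma}$ with the Gergely--Yezhow process $v(t)$ built from the records $\zeta_n=F^{-}(V_n)$, and then cites \cite[Theorem~1]{GY}; you instead prove the fact from scratch via the R\'enyi record representation: $E_k=-\log(1-V_k)$ has i.i.d.\ standard exponential increments, so $\{E_k\}$ is a unit-rate Poisson process and $Y^{\Gamma}_t=\#\{k\colon E_k\leq \mu_{\boldsymbol{\alpha}}\circ\Gamma(t)\}$ is its deterministic (given $\mathcal{G}$) continuous time change, hence inhomogeneous Poisson with the stated mean function. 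Your route is self-contained and arguably cleaner, at the price of re-proving what \cite{GY} supplies; the paper's route is shorter but outsources the probabilistic core. Two of your side remarks deserve emphasis because they point at imprecisions the paper itself glosses over. First, the $\lambda$-normalization mismatch is real: $Y^{\Gamma}$ is defined without $\lambda$ while Theorem~\ref{thm:charL1} asks for intensity $\lambda(\mu_{\boldsymbol{\alpha}}\circ\Gamma)$, and the paper's proof silently drops the $\lambda$ too, so your fix (insert $\lambda$ in the exponential, or normalize $\lambda=1$) is needed in both arguments. Second, the conditional-versus-unconditional issue is also genuine: Theorem~\ref{thm:charL1} is a statement about $\mathcal{G}$-conditional laws, whereas the statement here asserts equality of laws of the traces; the paper resolves this only implicitly, by in effect reading ``distributed as'' conditionally on $\mathcal{G}$. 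For the converse direction a purely unconditional reading is problematic (a Cox process directed by an independent copy of $\mu_{\boldsymbol{\alpha}}$ would match all unconditional trace laws without being conditionally Poisson given the original $\mathcal{G}$), so rather than relying on the monotone-class argument you sketch, the clean repair is to state and use the conditional version of the distributional identity --- which is exactly what both your argument and the paper's actually establish.
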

\begin{proof}
In the proof 
we assume that $ \lim_t \mu_{\boldsymbol{\alpha}} \circ \Gamma (t) = \infty$ almost surely.
When this is not the case, the proof should be changed as in \cite{GY}, where generalized random variables are introduced exactly
when $1- \exp(-\text{``intensity at $\infty$''}) < 1$.

By Theorem~\ref{thm:charL1}, we must prove that, conditioned on $%
\mathcal{G}$, $Y^\Gamma$ is an inhomogeneous Poisson process with
intensity $\mu_{\boldsymbol{\alpha}} \circ \Gamma$.
Conditioned on $\mathcal G$, let $F(t):= 1- \exp( - \mu_{\boldsymbol{\alpha}} \circ \Gamma (t) ) $
be the continuous deterministic cumulative distribution function. Let $F^{-}$ be its pseudo-inverse
$
F^{-} (x) = \inf\{ y \colon F(y) > x\},   
$
and
define $\xi_n=F^{-}(U_n)$, for each $n$.
Then $(\xi_n,n\geq 1)$ is a sequence of i.i.d.\ random variables with cumulative function $F$.
As in \cite{GY}, put $\zeta'_n= \max(\xi_1,\ldots,\xi_n)$, ($n = 1,2,\ldots$) omitting in the increasing sequence
\[
\zeta'_1,\zeta'_2,\ldots, \zeta'_n, \ldots
\]
all the repeating elements except one, we come to the strictly increasing sequence \cite[Eq.~(3)]{GY}
\[
\zeta_1,\zeta_2,\ldots, \zeta_n, \ldots
\]
Now, since $F^{-}$ is monotone, it is obvious by definition that $\zeta_n = F^{-} (V_n)$.
Again, $F^{-}$ is monotone, and hence
\begin{align*}
Y_t^{\Gamma} &= \sum_n n {\indI}_{[F^{-} (V_{n}),F^{-} (V_{n+1}))} (F^{-} (1- \exp( - \mu_{\boldsymbol{\alpha}} \circ \Gamma (t) )))\\
&= \sum_n n {\indI}_{[\zeta_n,\zeta_{n+1})} (t),
\end{align*}
that is the process $v(t)$ defined in \cite[Eq.~(7')]{GY}. The thesis is now an application of \cite[Theorem 1]{GY} and
Theorem~\ref{thm:charL1}.
\end{proof}

\subsection{Random time change}

The process $\mu_{\boldsymbol{\alpha}} $ may be used to reparametrize the
time of the increasing paths and sets. In fact, for any increasing path $%
\Gamma= (\Gamma_1(t),\Gamma_2(t))$, let 
\[
T(s,\omega) =  
\begin{cases}
\inf\{ t \colon
Y_{\alpha_1}(\Gamma_1(t)) \cdot Y_{\alpha_2} (\Gamma_2(t)) (\omega)> s\}
& \text{if }\{ t \colon
Y_{\alpha_1}(\Gamma_1(t)) \cdot Y_{\alpha_2} (\Gamma_2(t)) (\omega)> s\}\neq \varnothing;
\\
\infty & \text{otherwise};
\end{cases}
\]
be the first time that the intensity is seen to be bigger than $s$ on the increasing path, and define
\begin{equation}  \label{eq:reparam}
\Gamma_{\mu_{\boldsymbol{\alpha}}} (s,\omega) = \Gamma ( T(s,\omega) )
\end{equation}
the reparametrization of $\Gamma$ made by $\mu_{\boldsymbol{\alpha}} $.
Analogously,  for any increasing set $\Gamma$, let 
\begin{equation*}
\Gamma_{\mu_{\boldsymbol{\alpha}}} (s,\omega) = \Gamma ( \inf\{ t \colon
(\mu_{\boldsymbol{\alpha}}(\omega) \circ \Gamma ) (t) > s\} ).
\end{equation*}
We note that, for any fixed $s$ and $A \in \widetilde{\mathcal{A}}(u)$ 
\begin{equation}  \label{eq:optional}
\{ \omega\colon A \nsubseteq \Gamma_{\mu_{\boldsymbol{\alpha}}} (s)\} =
\cup_{t\in \mathbb{Q}} \Big(\{ A \nsubseteq \Gamma (t)\} \cap \{ {\mu_{%
\boldsymbol{\alpha}}} (\Gamma(t)\cap A) \geq s\} \Big) \in \mathcal{G}_{A} ,
\end{equation}
where $\mathcal{G}_{A} = \sigma( {\mu_{\boldsymbol{\alpha}}} (A^{\prime}),
A^{\prime}\subseteq A). $ We recall that a random measurable set $Z: \Omega
\to \widetilde{\mathcal{A}}(u)$ is called a $\mathcal{G}_{A}$-stopping set
if $\{ A \subseteq Z\} \in \mathcal{G}_{A} $ for any $A$. As a consequence,
the reparametrization given in \eqref{eq:reparam} transforms $\Gamma(\cdot)$
into $\Gamma_{\mu_{\boldsymbol{\alpha}}} (\cdot)$, a family of
continuous increasing stopping set by \eqref{eq:optional}. Such a
family is called an optional increasing set.
The random time change theorem 
(which can be made an easy consequence of the characterization of the Poisson process given in
\cite{WAT}) together with Theorem~\ref{thm:charL1} and Theorem~\ref{thm:charL2} give
the following corollaries, that can be seen as extensions of some results in \cite{AC,AC2}.

\begin{corollary}
A random simple locally finite counting measure $N_{\alpha _{1},\alpha _{2}}$ is a FPRF iff, conditioned on $%
\mathcal{G}$, $N \circ \Gamma_{\mu_{\boldsymbol{\alpha}}}$ is a standard
Poisson process, for any increasing set $\Gamma $.
\end{corollary}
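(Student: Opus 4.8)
The plan is to reduce both implications to Theorem~\ref{thm:charL1} by exploiting the fact that, once we condition on $\mathcal{G}$, the random measure $\mu_{\boldsymbol{\alpha}}$ is frozen, so that the cumulative intensity along any increasing set becomes a deterministic continuous function and the reparametrization \eqref{eq:reparam} becomes an ordinary (deterministic) time change. First I would fix an increasing set $\Gamma$ and work conditionally on $\mathcal{G}$. Writing $A(t):=\lambda\,(\mu_{\boldsymbol{\alpha}}\circ\Gamma)(t)$, the defining properties of an increasing set together with the continuity of $\mu_{\boldsymbol{\alpha}}$ guarantee that, given $\mathcal{G}$, the map $t\mapsto A(t)$ is deterministic, continuous, non-decreasing, with $A(0)=0$ and $A(\infty)=\infty$ (the latter under the standing assumption $\lim_t \mu_{\boldsymbol{\alpha}}\circ\Gamma(t)=\infty$). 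Moreover $T(s,\cdot)$ defined before \eqref{eq:reparam} is exactly the right-continuous generalized inverse of $A$, so that $(N\circ\Gamma_{\mu_{\boldsymbol{\alpha}}})(s)=(N\circ\Gamma)(T(s))$.

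For the forward implication, assume $N_{\alpha_1,\alpha_2}$ is a FPRF. By Theorem~\ref{thm:charL1}, conditioned on $\mathcal{G}$ the process $N\circ\Gamma$ is a one-parameter inhomogeneous Poisson process with cumulative intensity $A$, which is deterministic given $\mathcal{G}$. The classical random time change theorem for Poisson processes --- which, as noted in the text, follows from the Watanabe characterization \cite{WAT} --- states that composing such a process with the inverse $T$ of its continuous cumulative intensity yields a standard rate-one Poisson process. Hence, conditioned on $\mathcal{G}$, $N\circ\Gamma_{\mu_{\boldsymbol{\alpha}}}=(N\circ\Gamma)\circ T$ is a standard Poisson process, and since its conditional law is the fixed rate-one law not depending on $\mathcal{G}$, it is in particular independent of $\mathcal{G}$. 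As $\Gamma$ was arbitrary, this proves one direction.

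For the converse, assume that, conditioned on $\mathcal{G}$, $N\circ\Gamma_{\mu_{\boldsymbol{\alpha}}}$ is a standard Poisson process for every increasing set $\Gamma$. Because $A$ is continuous, the time change is invertible in the sense that $(N\circ\Gamma)(t)=(N\circ\Gamma_{\mu_{\boldsymbol{\alpha}}})(A(t))$, i.e.\ we recover $N\circ\Gamma$ by running the standard Poisson process along the deterministic clock $A$. Composing a rate-one Poisson process with a continuous deterministic time change $A(t)$ produces an inhomogeneous Poisson process whose cumulative intensity is precisely $A(t)=\lambda\,(\mu_{\boldsymbol{\alpha}}\circ\Gamma)(t)$. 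Thus, conditioned on $\mathcal{G}$, $N\circ\Gamma$ is an inhomogeneous Poisson process with intensity $\lambda\,(\mu_{\boldsymbol{\alpha}}\circ\Gamma)$ for every increasing set $\Gamma$, and Theorem~\ref{thm:charL1} then delivers that $N_{\alpha_1,\alpha_2}$ is a FPRF.

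The step I expect to be the main obstacle is the careful, conditional-on-$\mathcal{G}$ justification of the random time change, in particular the measurability and adaptedness of $T$ with respect to the correct filtration. This is exactly why the stopping-set discussion around \eqref{eq:optional} is needed: one must verify that $\Gamma_{\mu_{\boldsymbol{\alpha}}}$ is an optional increasing set, so that the $T(s)$ are genuine stopping times and the time-changed process is adapted, allowing the Watanabe-type argument to be invoked conditionally. Handling possible flat stretches of $A$ (where $T$ jumps) requires using the generalized inverse consistently, but since $A$ is continuous the compositions $(N\circ\Gamma)\circ T$ and its inverse are well defined up to the usual right-continuity conventions.
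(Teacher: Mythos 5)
Your proof is correct and takes essentially the same route as the paper, which gives no detailed argument for this corollary but states that it follows by combining the random time change theorem (itself a consequence of Watanabe's characterization in \cite{WAT}) with Theorem~\ref{thm:charL1}; your conditional-on-$\mathcal{G}$ deterministic time-change argument, together with the optionality/stopping-set remark tied to \eqref{eq:optional}, is exactly that outline made explicit. The one bookkeeping discrepancy is the factor $\lambda$: the paper's $T$ inverts $\mu_{\boldsymbol{\alpha}}\circ\Gamma$ rather than $\lambda\,(\mu_{\boldsymbol{\alpha}}\circ\Gamma)$, so under the paper's definition the time-changed process is homogeneous of rate $\lambda$ (the word ``standard'' there is loose), whereas you absorb $\lambda$ into $A$ and obtain rate one --- either normalization is fine, but your claim that your $T$ is ``exactly'' the paper's holds only when $\lambda=1$.
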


\begin{corollary}
A random simple locally finite counting measure  $N_{\alpha _{1},\alpha _{2}}$ is a FPRF iff, conditioned on $%
\mathcal{G}$, the Cairoli-Walsh condition holds \cite{CW,KS81} and $N\circ \Gamma _{\mu _{\boldsymbol{\alpha }}}$
is a standard Poisson process, for any increasing path $\Gamma $.
\end{corollary}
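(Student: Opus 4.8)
The plan is to reduce this corollary to the already-established characterization in Theorem~\ref{thm:charL2} by applying a random time change. The key idea is that the reparametrization $\Gamma_{\mu_{\boldsymbol{\alpha}}}$ is precisely designed so that the conditional intensity of $N\circ\Gamma$ along the path, which by Theorem~\ref{thm:charL2} equals $Y_{\alpha_1}(\Gamma_1(t))\cdot Y_{\alpha_2}(\Gamma_2(t))$, gets transformed into the identity intensity. Concretely, if conditioned on $\mathcal{G}$ the process $N\circ\Gamma$ is an inhomogeneous Poisson process with intensity function $t\mapsto Y_{\alpha_1}(\Gamma_1(t))\cdot Y_{\alpha_2}(\Gamma_2(t))$, then composing with the time change $T(s,\omega)$ that inverts this (conditionally deterministic) intensity yields a standard Poisson process. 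This is the classical random time change theorem for Poisson processes, which (as noted in the excerpt) can be derived from the Watanabe characterization in \cite{WAT}.

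First I would handle the forward direction. Assume $N_{\alpha_1,\alpha_2}$ is a FPRF. By Theorem~\ref{thm:charL2}, conditioned on $\mathcal{G}$ and under the Cairoli-Walsh condition, $N\circ\Gamma$ is an inhomogeneous Poisson process with the stated intensity for every increasing path $\Gamma$. Fix $\omega$ in the conditioning $\sigma$-algebra $\mathcal{G}$; then the intensity $t\mapsto Y_{\alpha_1}(\Gamma_1(t))\cdot Y_{\alpha_2}(\Gamma_2(t))(\omega)$ is a fixed continuous nondecreasing function, and $T(s,\omega)$ is its right-continuous generalized inverse. The random time change theorem then asserts that $N\circ\Gamma_{\mu_{\boldsymbol{\alpha}}}(s)=(N\circ\Gamma)(T(s,\omega))$ is, conditionally on $\mathcal{G}$, a standard (rate-one) Poisson process. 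The measurability of $T(s,\cdot)$ as a stopping time with respect to the appropriate conditional filtration follows from the representation analogous to \eqref{eq:optional}, guaranteeing that the composition is well defined and adapted.

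For the converse, I would reverse the argument: assume that, conditioned on $\mathcal{G}$, the Cairoli-Walsh condition holds and $N\circ\Gamma_{\mu_{\boldsymbol{\alpha}}}$ is a standard Poisson process for every increasing path $\Gamma$. Given an arbitrary increasing path $\Gamma$, one recovers $N\circ\Gamma$ by undoing the time change, i.e.\ $(N\circ\Gamma)(t)=(N\circ\Gamma_{\mu_{\boldsymbol{\alpha}}})\big(Y_{\alpha_1}(\Gamma_1(t))\cdot Y_{\alpha_2}(\Gamma_2(t))\big)$, which conditionally on $\mathcal{G}$ exhibits $N\circ\Gamma$ as a time-changed standard Poisson process, hence an inhomogeneous Poisson process with the required intensity $Y_{\alpha_1}(\Gamma_1(t))\cdot Y_{\alpha_2}(\Gamma_2(t))$. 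Since this holds for all increasing paths and the Cairoli-Walsh condition is assumed, Theorem~\ref{thm:charL2} yields that $N_{\alpha_1,\alpha_2}$ is a FPRF.

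The main obstacle I anticipate is rigorously justifying the random time change theorem in the \emph{conditional} setting, where the intensity is a $\mathcal{G}$-measurable random object rather than a fixed deterministic function, while simultaneously maintaining the two-parameter Cairoli-Walsh structure through the transformation. The delicate point is that $\Gamma_{\mu_{\boldsymbol{\alpha}}}$ is an \emph{optional increasing set} (as established via \eqref{eq:optional}), so one must verify that the time-change preserves the conditional independence property (F4) and does not destroy the strong-martingale structure underlying the FPRF characterization. Once the conditioning on $\mathcal{G}$ reduces everything to the deterministic-intensity case, the classical Watanabe-based random time change applies path by path, but the measurable selection and the interplay with the partial-order filtration require care—this is precisely why the reference to the stopping-set formalism in \cite{AC,AC2} is invoked.
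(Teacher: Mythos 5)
Your proposal follows essentially the same route as the paper, which gives no detailed proof of this corollary but states exactly your strategy: combine the random time change theorem (an easy consequence of Watanabe's characterization in \cite{WAT}) with Theorem~\ref{thm:charL2}, applying the reparametrization $\Gamma_{\mu_{\boldsymbol{\alpha}}}$ in the forward direction and undoing it in the converse. The one step you state without justification, the identity $(N\circ \Gamma)(t)=(N\circ \Gamma_{\mu_{\boldsymbol{\alpha}}})\bigl(Y_{\alpha_1}(\Gamma_1(t))\cdot Y_{\alpha_2}(\Gamma_2(t))\bigr)$ in the converse, needs the observation that $N\circ\Gamma$ a.s.\ puts no points on the (countably many maximal) flat stretches of the conditional intensity, where $T(\Lambda(t))>t$; this follows because a jump there would force the standard Poisson process $N\circ \Gamma_{\mu_{\boldsymbol{\alpha}}}$ to jump at a $\mathcal{G}$-measurable fixed time, an event of conditional probability zero, a gap the paper's one-line argument leaves equally implicit.
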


\section{Fractional Differential Equations}\label{sect:5}

A direct calculation may be applied to show that the 
marginal distribution of the classical Poisson random field $%
N(t_{1},t_{2}),\ (t_{1},t_{2})\in \mathbb{R}_{+}^{2}$%
\begin{equation*}
p_{k}^{c}(t_{1},t_{2})=\mathrm{P}\left( N(t_{1},t_{2})=k\right) =\frac{%
e^{-\lambda t_{1}t_{2}}(\lambda t_{1}t_{2})^{k}}{k!},k=0,1,2\ldots
\end{equation*}%
satisfy the following differential-difference equations:%
\begin{align}
&\frac{\partial ^{2}p_{0}^{c}\left( t_{1},t_{2}\right) }{\partial
t_{1}~\partial t_{2}}=\left( -\lambda +\lambda ^{2}t_{1}t_{2}\right)
p_{0}^{c}\left( t_{1},t_{2}\right) ;  \label{5.1}
\\
&\frac{\partial ^{2}p_{1}^{c}\left( t_{1},t_{2}\right) }{\partial
t_{1}~\partial t_{2}}=\left( -3\lambda +\lambda ^{2}t_{1}t_{2}\right)
p_{1}^{c}\left( t_{1},t_{2}\right) +\lambda p_{0}^{c}\left(
t_{1},t_{2}\right) ;  \label{5.2}
\\
&\frac{\partial ^{2}p_{k}^{c}\left( t_{1},t_{2}\right) }{\partial
t_{1}~\partial t_{2}}=\left( -\lambda +\lambda ^{2}t_{1}t_{2}\right)
p_{k}^{c}\left( t_{1},t_{2}\right)  +\left( \lambda -2\lambda
^{2}t_{1}t_{2}\right) p_{k-1}^{c}\left( t_{1},t_{2}\right) +\lambda
^{2}p_{k-2}^{c}\left( t_{1},t_{2}\right) ;\ k\geq 2;  \label{5.3}
\end{align}%
and the initial conditions:%
\begin{equation*}
p_{0}^{c}\left( 0,0\right) =1,~p_{k}^{c}\left( 0,0\right) =p_{k}^{c}\left(
t_{1},0\right) =p_{k}^{c}\ \left( 0,t_{2}\right) =0,\ k\geq 1.  
\end{equation*}

We are now ready to derive the governing equations of the marginal
distributions of FPRF $N_{\alpha _{1},\alpha
_{2}}(t_{1},t_{2}),~(t_{1},t_{2})\in \mathbb{R}_{+}^{2}:$%
\begin{equation}
p_{k}^{\alpha _{1},\alpha _{2}}\left( t_{1},t_{2}\right) =\mathrm{P}\left(
N_{\alpha _{1},\alpha _{2}}(t_{1},t_{2})=k\right) ,\ k=0,1,2,\ldots
\label{MD}
\end{equation}%
given by (\ref{P2}) or (\ref{LP}). These equations have something in common
with the governing equations for the non-homogeneous Fractional Poisson
processes \cite{LST}.

For a function $u(t_{1},t_{2}),~(t_{1},t_{2})\in \mathbb{R}_{+}^{2},$ the
Caputo-Djrbashian mixed fractional derivative of order $\alpha _{1},\alpha
_{2}\in (0,1)\times \left( 0,1\right) $ is defined by%
\begin{multline*}
\mathrm{D}_{t_{1},t_{2}}^{\alpha _{1},\alpha _{2}}u(t_{1},t_{2}) 
=\frac{1}{%
\Gamma \left( 1-\alpha _{1}\right) \Gamma \left( 1-\alpha _{2}\right) }%
\int_{0}^{t_{1}}\int_{0}^{t_{2}}\frac{\partial ^{2}u\left( \tau _{1},\tau
_{2}\right) }{\partial \tau _{1}~\partial \tau _{2}}\frac{d \tau
_{1}\,d\tau _{2} }{\left( t_{1}-\tau _{1}\right) ^{\alpha _{1}}\left(
t_{2}-\tau _{2}\right) ^{\alpha _{2}}}  
\\
=\frac{1}{\Gamma \left( 1-\alpha _{1}\right) \Gamma \left( 1-\alpha
_{2}\right) }\int_{0}^{t_{1}}\int_{0}^{t_{2}}\frac{\partial ^{2}u\left(
t_{1}-\upsilon _{1},t_{2}-\upsilon _{2}\right) }{\partial \upsilon
_{1}~\partial \upsilon _{2}}\frac{d \upsilon _{1}\,d\upsilon
_{2}}{\upsilon _{1}^{\alpha _{1}}\upsilon _{2}^{\alpha _{2}}}.  
\end{multline*}

Assuming that 
\begin{equation*}
e^{-s_{1}t_{1}-s_{2}t_{2}}\frac{\partial ^{2}u\left( t_{1}-\upsilon
_{1},t_{2}-\upsilon _{2}\right) }{\partial \upsilon _{1}~\partial \upsilon
_{2}}\upsilon _{1}^{-\alpha _{1}}\upsilon _{2}^{-\alpha _{2}}
\end{equation*}%
is integrable as function of four variables $t_{1},t_{2}$,$\upsilon
_{1},\upsilon _{2},$ the double Laplace transform of the the Caputo-Djrbashian mixed fractional derivative%
\begin{multline}
\mathcal{L}\left\{ \mathrm{D}_{t_{1},t_{2}}^{\alpha _{1},\alpha
_{2}}u(t_{1},t_{2});s_{1},s_{2}\right\} =\int_{0}^{\infty }\int_{0}^{\infty
}e^{-s_{1}t_{1}-s_{2}t_{2}}\mathrm{D}_{t_{1},t_{2}}^{\alpha _{1},\alpha
_{2}}u(t_{1},t_{2})dt_{1}~dt_{2}
\\
=s_{1}^{\alpha _{1}}s_{2}^{\alpha _{2}}\tilde{u}(s_{1},s_{2})-s_{1}^{\alpha
_{1}-1}s_{2}^{\alpha _{2}}\tilde{u}(s_{1},0)-s_{1}^{\alpha
_{1}}s_{2}^{\alpha _{2}-1}\tilde{u}(0,s_{2})-s_{1}^{\alpha
_{1}-1}s_{2}^{\alpha _{2}-1}\tilde{u}(0,0),  \label{5.6}
\end{multline}%
where $\tilde{u}(s_{1},s_{2})=\mathcal{L}\left\{
u(t_{1},t_{2});s_{1},s_{2}\right\} $ is the double Laplace transform of the
function $u(t_{1},t_{2})$. 

\begin{remark}%
Note that the Laplace transform of $f_{\alpha}(t,x)$ given by (\ref{L1}) as $\alpha =1$
is of the form $e^{-sx}$ and its inverse is the delta distribution $\delta(t-x)$. Accordingly,
as $\alpha \to 1$, $f_{\alpha}(t,x)$ converges weakly to $\delta(t-x)$, and we denote it 
by $f_{\alpha}(t,x)\to \delta(t-x)$.
\end{remark}%

The proof of (\ref{5.6}) is standard and we omit it (see \cite[p. 37]{MeS}
for the one-dimensional case).

\begin{theorem}
Let $N(t_{1},t_{2}),\ (t_{1},t_{2})\in \mathbb{R}_{+}^{2},\alpha _{1},\alpha
_{2}\in (0,1)\times \left( 0,1\right) ,$ be the FPRF defined by (\ref{FPRF}).

\textbf{1)} Then its marginal distribution given in (\ref{MD}) satisfy the
following fractional differential-integral recurrent equations:%
\begin{align}
\mathrm{D}_{t_{1},t_{2}}^{\alpha _{1},\alpha _{2}}p_{0}^{\alpha _{1},\alpha
_{2}}\left( t_{1},t_{2}\right) 
&=\int_{0}^{\infty }\int_{0}^{\infty }\left(
-\lambda +\lambda ^{2}x_{1}x_{2}\right) p_{0}^{\alpha _{1},\alpha
_{2}}\left( x_{1},x_{2}\right)  f_{\alpha _{1}}(t_{1},x_{1})f_{\alpha
_{2}}(t_{2},x_{2})dx_{1}dx_{2};  \label{5.7}
\\
\mathrm{D}_{t_{1},t_{2}}^{\alpha _{1},\alpha _{2}}p_{1}^{\alpha _{1},\alpha
_{2}}\left( t_{1},t_{2}\right) &
=\int_{0}^{\infty }\int_{0}^{\infty }\big[ \left( -3\lambda +\lambda
^{2}x_{1}x_{2}\right) p_{1}^{\alpha _{1},\alpha _{2}}\left(
x_{1},x_{2}\right) \notag \\
& \qquad +\lambda p_{0}^{\alpha _{1},\alpha _{2}}\left(
x_{1},x_{2}\right) \big] 
f_{\alpha _{1}}(t_{1},x_{1})f_{\alpha
_{2}}(t_{2},x_{2})dx_{1}dx_{2};  \label{5.8}
\\
\mathrm{D}_{t_{1},t_{2}}^{\alpha _{1},\alpha _{2}}p_{k}^{\alpha _{1},\alpha
_{2}}\left( t_{1},t_{2}\right) 
& 
=\int_{0}^{\infty }\int_{0}^{\infty }\Big[ \left( -\lambda +\lambda
^{2}x_{1}x_{2}\right) p_{k}^{\alpha _{1},\alpha _{2}}\left(
x_{1},x_{2}\right) \notag \\
& \qquad +\left( \lambda -2\lambda ^{2}x_{1}x_{2}\right)
p_{k-1}^{\alpha _{1},\alpha _{2}}\left( x_{1},x_{2}\right) +\lambda
^{2}x_{1}xp_{k-2}^{\alpha _{1},\alpha _{2}}\left( x_{1},x_{2}\right) \Big]
\notag 
\\
& \qquad \qquad \times f_{\alpha _{1}}(t_{1},x_{1})f_{\alpha
_{2}}(t_{2},x_{2})dx_{1}dx_{2}, \qquad\qquad\qquad\qquad k\geq 2;  \label{5.9}
\end{align}

with the initial conditions:%
\begin{equation*}
p_{0}^{\alpha _{1},\alpha _{2}}(0,0)=1,~p_{k}^{\alpha _{1},\alpha
_{2}}(0,0)=p_{k}^{\alpha _{1},\alpha _{2}}\left( t_{1},0\right)
=p_{k}^{\alpha _{1},\alpha _{2}}\left( 0,t_{2}\right) =0,~k\geq 1.
\end{equation*}

\textbf{2)} For $\alpha _{1}\rightarrow 1,\alpha _{2}\rightarrow 1,f_{\alpha
_{1}}(t_{1},x_{1})\rightarrow \delta (t_{1}-x_{1}),\ f_{\alpha
_{2}}(t_{2},x_{2})\rightarrow \delta (t_{2}-x_{2}),$ hence (\ref{5.7}),\ (%
\ref{5.8}) and (\ref{5.9}) become (\ref{5.1}), (\ref{5.2}) and (\ref{5.3})
correspondingly.
\end{theorem}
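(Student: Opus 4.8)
The plan is to exploit the subordination representation of the marginals and to reduce the whole statement to a single commutation identity between the mixed fractional derivative and subordination. Writing $p_{k}^{c}(x_{1},x_{2})=e^{-\lambda x_{1}x_{2}}(\lambda x_{1}x_{2})^{k}/k!$ for the ordinary Poisson-field marginals, (\ref{P2}) exhibits each fractional marginal as an average of a classical one,
\begin{equation*}
p_{k}^{\alpha_{1},\alpha_{2}}(t_{1},t_{2})=\int_{0}^{\infty}\int_{0}^{\infty}p_{k}^{c}(x_{1},x_{2})\,f_{\alpha_{1}}(t_{1},x_{1})f_{\alpha_{2}}(t_{2},x_{2})\,dx_{1}dx_{2}.
\end{equation*}
By the classical differential-difference equations (\ref{5.1})--(\ref{5.3}), the right-hand sides of (\ref{5.7})--(\ref{5.9}) are precisely the averages of $\partial^{2}p_{k}^{c}/\partial x_{1}\partial x_{2}$ against $f_{\alpha_{1}}(t_{1},\cdot)f_{\alpha_{2}}(t_{2},\cdot)$. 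Hence it suffices to establish the single subordination identity
\begin{equation}\label{eq:subordFDE}
\mathrm{D}_{t_{1},t_{2}}^{\alpha_{1},\alpha_{2}}V(t_{1},t_{2})=\int_{0}^{\infty}\int_{0}^{\infty}\frac{\partial^{2}v}{\partial x_{1}\partial x_{2}}(x_{1},x_{2})\,f_{\alpha_{1}}(t_{1},x_{1})f_{\alpha_{2}}(t_{2},x_{2})\,dx_{1}dx_{2},
\end{equation}
valid for $V(t_{1},t_{2})=\int_{0}^{\infty}\int_{0}^{\infty}v(x_{1},x_{2})f_{\alpha_{1}}(t_{1},x_{1})f_{\alpha_{2}}(t_{2},x_{2})\,dx_{1}dx_{2}$, and then to apply it with $v=p_{k}^{c}$.

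I would prove (\ref{eq:subordFDE}) by matching double Laplace transforms. On the left, (\ref{5.6}) expresses $\mathcal{L}\{\mathrm{D}^{\alpha_{1},\alpha_{2}}V\}$ through $s_{1}^{\alpha_{1}}s_{2}^{\alpha_{2}}\tilde{V}(s_{1},s_{2})$ and boundary transforms, while the elementary transform (\ref{L1}) of $f_{\alpha}$ gives $\tilde{V}(s_{1},s_{2})=s_{1}^{\alpha_{1}-1}s_{2}^{\alpha_{2}-1}\tilde{v}(s_{1}^{\alpha_{1}},s_{2}^{\alpha_{2}})$. On the right, (\ref{L1}) reduces the averaged mixed derivative to $s_{1}^{\alpha_{1}-1}s_{2}^{\alpha_{2}-1}\mathcal{L}\{\partial^{2}v/\partial x_{1}\partial x_{2}\}(s_{1}^{\alpha_{1}},s_{2}^{\alpha_{2}})$, and the standard rule for the Laplace transform of a mixed partial derivative expands the latter through $\tilde{v}(s_{1}^{\alpha_{1}},s_{2}^{\alpha_{2}})$ and the boundary transforms $\tilde{v}(\cdot,0)$, $\tilde{v}(0,\cdot)$, $v(0,0)$. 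Because $Y_{\alpha_{i}}(0)=0$, the density $f_{\alpha_{i}}(t_{i},\cdot)$ degenerates to a unit mass at the origin as $t_{i}\to0$, whence $\tilde{V}(s_{1},0)=s_{1}^{\alpha_{1}-1}\tilde{v}(s_{1}^{\alpha_{1}},0)$, $\tilde{V}(0,s_{2})=s_{2}^{\alpha_{2}-1}\tilde{v}(0,s_{2}^{\alpha_{2}})$ and $V(0,0)=v(0,0)$. Substituting these makes the two transforms coincide term by term, so by uniqueness of the Laplace transform (\ref{eq:subordFDE}) holds.

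Applying (\ref{eq:subordFDE}) with $v=p_{k}^{c}$ and inserting (\ref{5.1})--(\ref{5.3}) for $\partial^{2}p_{k}^{c}/\partial x_{1}\partial x_{2}$ yields (\ref{5.7})--(\ref{5.9}). The initial conditions pass through the averaging formula: since $N(\cdot,0)=N(0,\cdot)=0$ a.s.\ for the underlying Poisson field and $Y_{\alpha_{i}}(0)=0$, one gets $p_{k}^{\alpha_{1},\alpha_{2}}(t_{1},0)=p_{k}^{c}(t_{1},0)$ and $p_{k}^{\alpha_{1},\alpha_{2}}(0,t_{2})=p_{k}^{c}(0,t_{2})$, so $p_{0}^{\alpha_{1},\alpha_{2}}(0,0)=1$ and $p_{k}^{\alpha_{1},\alpha_{2}}$ vanishes on the axes for $k\geq1$. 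For part 2) I would let $\alpha_{1},\alpha_{2}\to1$ and invoke the weak convergence $f_{\alpha_{i}}(t_{i},x_{i})\to\delta(t_{i}-x_{i})$ of the Remark: the averaging kernel concentrates on the diagonal, so the right-hand sides of (\ref{5.7})--(\ref{5.9}) tend to the right-hand sides of (\ref{5.1})--(\ref{5.3}) evaluated at $(t_{1},t_{2})$, while the Caputo-Djrbashian kernel $t^{-\alpha}/\Gamma(1-\alpha)$ tends distributionally to $\delta(t)$ so that $\mathrm{D}_{t_{1},t_{2}}^{\alpha_{1},\alpha_{2}}\to\partial^{2}/\partial t_{1}\partial t_{2}$; thus (\ref{5.7})--(\ref{5.9}) collapse to (\ref{5.1})--(\ref{5.3}).

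The main obstacle is the careful bookkeeping of the boundary terms in (\ref{eq:subordFDE}): one must verify that the terms generated by the mixed-partial rule on the right match exactly those generated by (\ref{5.6}) on the left, which hinges on correctly identifying the degenerate limit $f_{\alpha_{i}}(t_{i},\cdot)\to\delta_{0}$ as $t_{i}\to0$ and on the Fubini and integrability conditions already flagged before (\ref{5.6}) that permit interchanging $\mathrm{D}^{\alpha_{1},\alpha_{2}}$ with the double integral and differentiating under it. The limiting claims in part 2) are similarly delicate, since $f_{\alpha_{i}}\to\delta$ and $\mathrm{D}^{\alpha_{1},\alpha_{2}}\to\partial^{2}/\partial t_{1}\partial t_{2}$ hold only in a distributional sense and would require a dominated-convergence argument on the averaged right-hand sides to be made fully rigorous.
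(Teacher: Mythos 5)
Your proof is correct and takes essentially the same approach as the paper: both reduce the theorem to the commutation identity (the mixed Caputo--Djrbashian derivative of a subordinated function equals the subordinated classical mixed partial derivative), prove it by matching double Laplace transforms via (\ref{L1}) and (\ref{5.6}) together with a bivariate integration by parts, and then conclude by inserting the classical equations (\ref{5.1})--(\ref{5.3}), with the same treatment of the initial conditions and of the limit $\alpha_1,\alpha_2\to 1$. The only cosmetic difference is that the paper carries out the Laplace-transform computation once at the level of the characteristic function $\hat{p}(t_1,t_2;z)$ and then inverts in $z$, whereas you apply the identity to each classical marginal $p_k^{c}$ directly.
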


\begin{proof}
\textbf{1)} The initial conditions are easily checked using the fact that $Y_{\alpha
_{1}}(0)=Y_{\alpha _{2}}(0)=0$ a.s.

Let $p_{k}^{\alpha _{1},\alpha _{2}}\left( t_{1},t_{2}\right)
,k=0,1,2,\ldots $, be defined as in equations (\ref{P2}) or (\ref{LP}). Then
the characteristic function of the FPRF, for $z\in \mathbb{R}$:%
\begin{equation}
\hat{p}(t_{1},t_{2};z)=\mathrm{E}\exp \left\{ izN_{\alpha _{1},\alpha
_{2}}(t_{1},t_{2})\right\} 
=\int_{0}^{\infty }\int_{0}^{\infty }e^{\lambda
x_{1}x_{2}(e^{iz}-1)}f_{\alpha _{1}}(t_{1},x_{1})f_{\alpha
_{2}}(t_{2},x_{2})dx_{1}dx_{2}.  \label{5.11}
\end{equation}

Taking the double Laplace transform of (\ref{5.11}) and using (\ref{L1}) and
(\ref{L2}) yields%
\begin{align}
\bar{p}(s_{1},s_{2};z) &=\widetilde{\hat{p}}(t_{1},t_{2};z)=\int_{0}^{%
\infty }\int_{0}^{\infty }e^{-s_{1}t_{1}-s_{2}t_{2}}\hat{p}%
(t_{1},t_{2};z)dt_{1}dt_{2}  \label{5.12} \\
&=s_{1}^{\alpha _{1}-1}s_{2}^{\alpha _{2}-1}\int_{0}^{\infty
}\int_{0}^{\infty }e^{\lambda x_{1}x_{2}(e^{iz}-1)}e^{-x_1s_{1}^{\alpha
_{1}}-x_2s_{2}^{\alpha _{2}}} 
dx_{1}dx_{2},  \notag
\end{align}%
and%
\begin{equation*}
\bar{p}\left( 0,0,z\right) =\bar{p}\left( 0,s_{2},z\right) =\bar{p}\left(
s_{1},0,z\right) =0.  
\end{equation*}

Using an integration by parts for a double integral \cite{LMS00}:%
\begin{align*}
\int_{0}^{\infty }\int_{0}^{\infty }&F(x_{1},x_{2})H\left(
dx_{1},dx_{2}\right) =\int_{0}^{\infty }\int_{0}^{\infty }H\left(
[x_{1},\infty )\times [ x_{2},\infty )\right) F\left(
dx_{1},dx_{2}\right)
\\
& \qquad +\int_{0}^{\infty }H\left( [x_{1},\infty )\times [ 0,\infty )\right)
F\left( dx_{1},0\right) 
\\
& \qquad +\int_{0}^{\infty }H\left( [0,\infty )\times [
x_{2},\infty )\right) F\left( 0,dx_{2}\right) +F(0,0)H\left( [0,\infty
)\times [ 0,\infty )\right) ,  
\end{align*}%
we get from (\ref{5.6}), (\ref{5.12}) and (\ref{5.12}) with 
\begin{align*}
F(x_{1},x_{2})&=\exp \left\{ \lambda x_{1}x_{2}(e^{iz}-1)\right\} ,\ H\left(
dx_{1},dx_{2}\right) =\exp \left\{ -s_{1}^{\alpha _{1}}x_{1}-s_{2}^{\alpha
_{2}}x_{2}\right\} dx_{1}dx_{2},
\\
\bar{p}(s_{1},s_{2};z)&=s_{1}^{\alpha _{1}-1}s_{2}^{\alpha _{2}-1}\Big[
\int_{0}^{\infty }\int_{0}^{\infty }\frac{\partial ^{2}\exp \left\{
ix_{1}x_{2}(e^{iz}-1)\right\} }{\partial x_{1}~\partial x_{2}} \\
& \qquad  \qquad  \qquad  \qquad  \times \frac{\exp
\left\{ -s_{1}^{\alpha _{1}}x_{1}-s_{2}^{\alpha _{2}}x_{2}\right\} }{%
s_{1}^{\alpha _{1}}s_{2}^{\alpha _{2}}}dx_{1},dx_{2}
+\frac{\hat{p}(0,0,z)}{%
s_{1}^{\alpha _{1}}s_{2}^{\alpha _{2}}}\Big] .
\end{align*}%
Thus%
\begin{multline*}
s_{1}^{\alpha _{1}}s_{2}^{\alpha _{2}}\bar{p}(s_{1},s_{2};z)-\hat{p}%
(0,0,z)\\
=s_{1}^{\alpha _{1}-1}s_{2}^{\alpha _{2}-1}\int_{0}^{\infty
}\int_{0}^{\infty }\frac{\partial ^{2}\exp \left\{
ix_{1}x_{2}(e^{iz}-1)\right\} }{\partial x_{1}~\partial x_{2}}\exp \left\{
-s_{1}^{\alpha _{1}}x_{1}-s_{2}^{\alpha _{2}}x_{2}\right\} dx_{1},dx_{2}
\end{multline*}

Using (\ref{5.6}), (\ref{L1}) we can invert the double Laplace transform as
follows:%
\begin{equation*}
\mathrm{D}_{t_{1},t_{2}}^{\alpha _{1},\alpha _{2}}\hat{p}\left(
t_{1},t_{2},z\right) =\int_{0}^{\infty }\int_{0}^{\infty }\frac{\partial
^{2}\exp \left\{ ix_{1}x_{2}(e^{iz}-1)\right\} }{\partial x_{1}~\partial
x_{2}}f_{\alpha _{1}}(t_{1},x_{1})f_{\alpha _{2}}(t_{2},x_{2})dx_{1}dx_{2}.
\end{equation*}%
And finally, by inverting the characteristic function (\ref{5.11}), we
obtain 
\begin{equation*}
\mathrm{D}_{t_{1},t_{2}}^{\alpha _{1},\alpha _{2}}\hat{p}\left(
t_{1},t_{2},z\right) p_{k}^{\alpha _{1},\alpha _{2}}\left(
t_{1},t_{2}\right) =\int_{0}^{\infty }\int_{0}^{\infty }\left[ \frac{%
\partial ^{2}}{\partial x_{1}~\partial x_{2}}p_{k}^{c}(x_{1},x_{2})\right]
f_{\alpha _{1}}(t_{1},x_{1})f_{\alpha _{2}}(t_{2},x_{2})dx_{1}dx_{2}.
\end{equation*}

Using (\ref{5.1}), (\ref{5.2}) and (\ref{5.3}) we arrive to (\ref{5.7}),\ (%
\ref{5.8}) and (\ref{5.9}) correspondingly.

\medskip

\noindent \textbf{2)} 
Finally, as $\alpha _{j}\rightarrow 1,j=1,2$ we have $e^{-s_{j}^{\alpha
_{j}}x_{j}}\rightarrow e^{-s_{j}x_{j}},j=1,2$, and their Laplace inversions
are delta function: $\delta (t_{j}-x_{j}),j=1,2$.
Thus, 2) is proven.
\end{proof}

\section{Simulations}

In this section we show some simulations of FPRF made with Matlab based on the 
$\alpha$-stable random number generator function 
{\texttt stblrnd}. 
For a relevant work on statistical parameter estimation of FPP 
in connection with simulations, see also \cite{Cahoy10}.

\begin{figure}
\centering
\subfigure[$\alpha_1 = 0.95$, $\alpha_2 = 0.5$, $\lambda = 100$]{
\includegraphics[width=.45\textwidth]{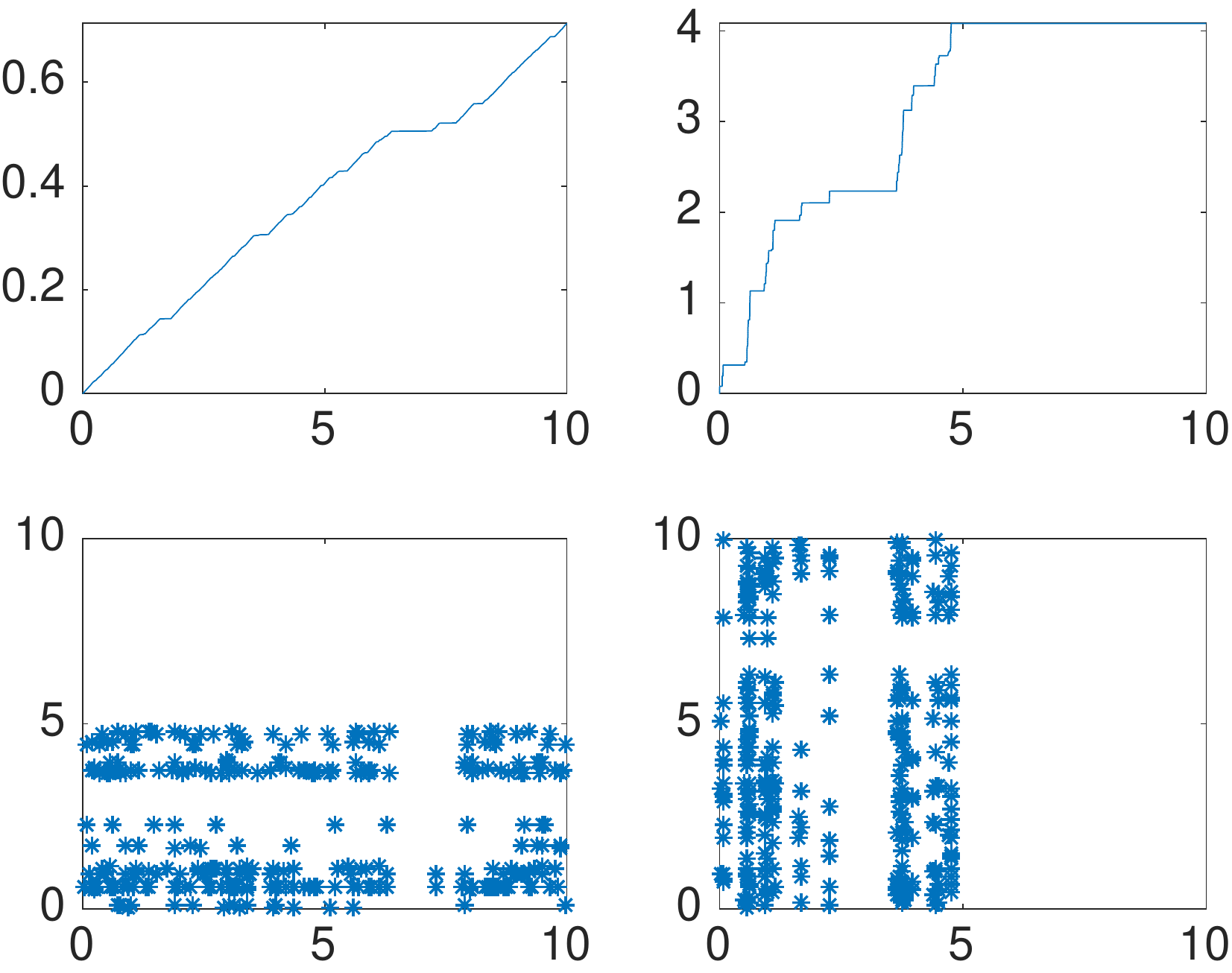}}
\qquad
\subfigure[$\alpha_1 = \alpha_2 = 0.75$, $\lambda = 100$]{
\includegraphics[width=.45\textwidth]{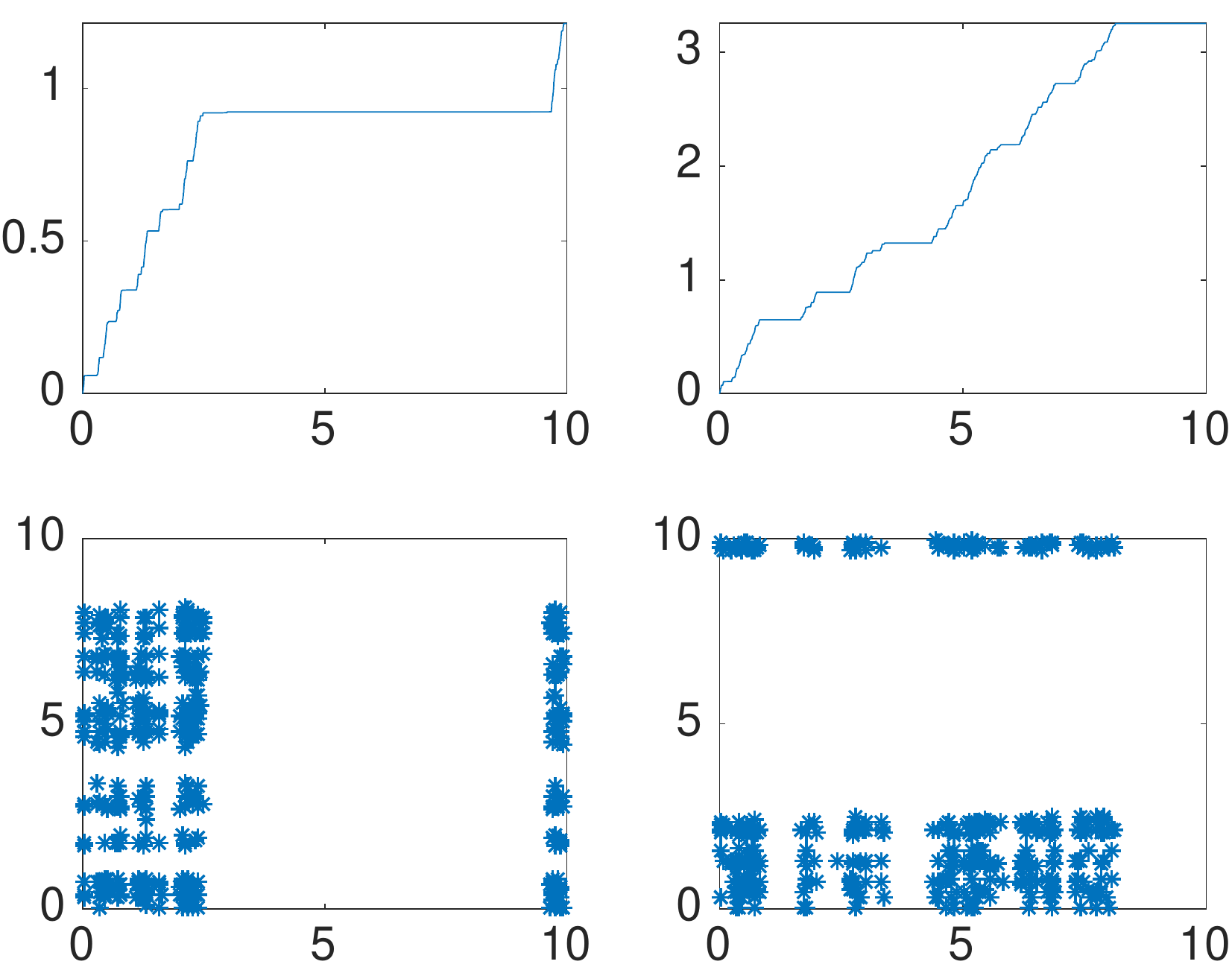}}
\\
\subfigure[$\alpha_1 = 0.9$, $\alpha_2 = 0.75$, $\lambda = 100$]{
\includegraphics[width=.45\textwidth]{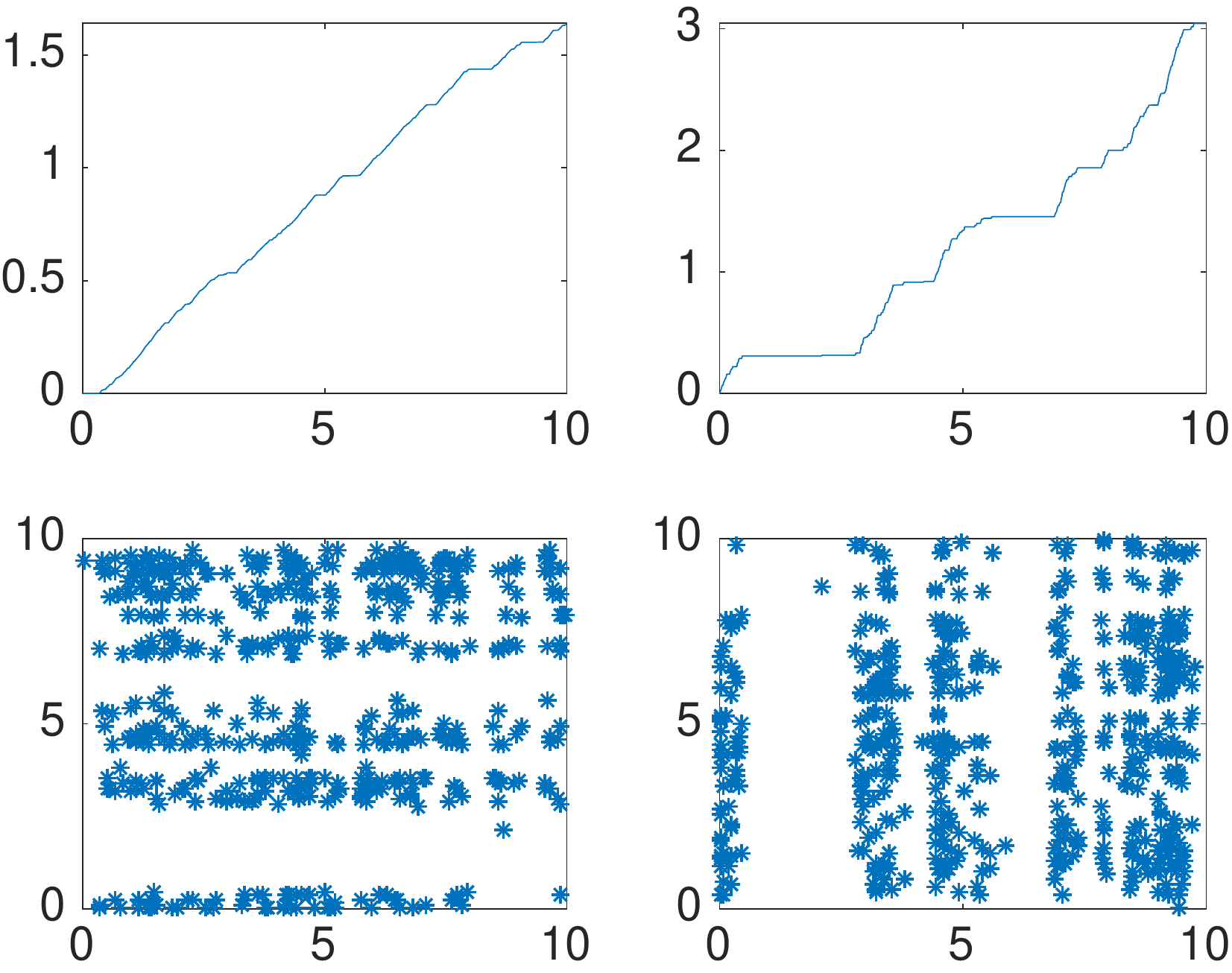}}
\qquad
\subfigure[$\alpha_1 =\alpha_2 \sim 1$, $\lambda = 10000$]{
\includegraphics[width=.45\textwidth]{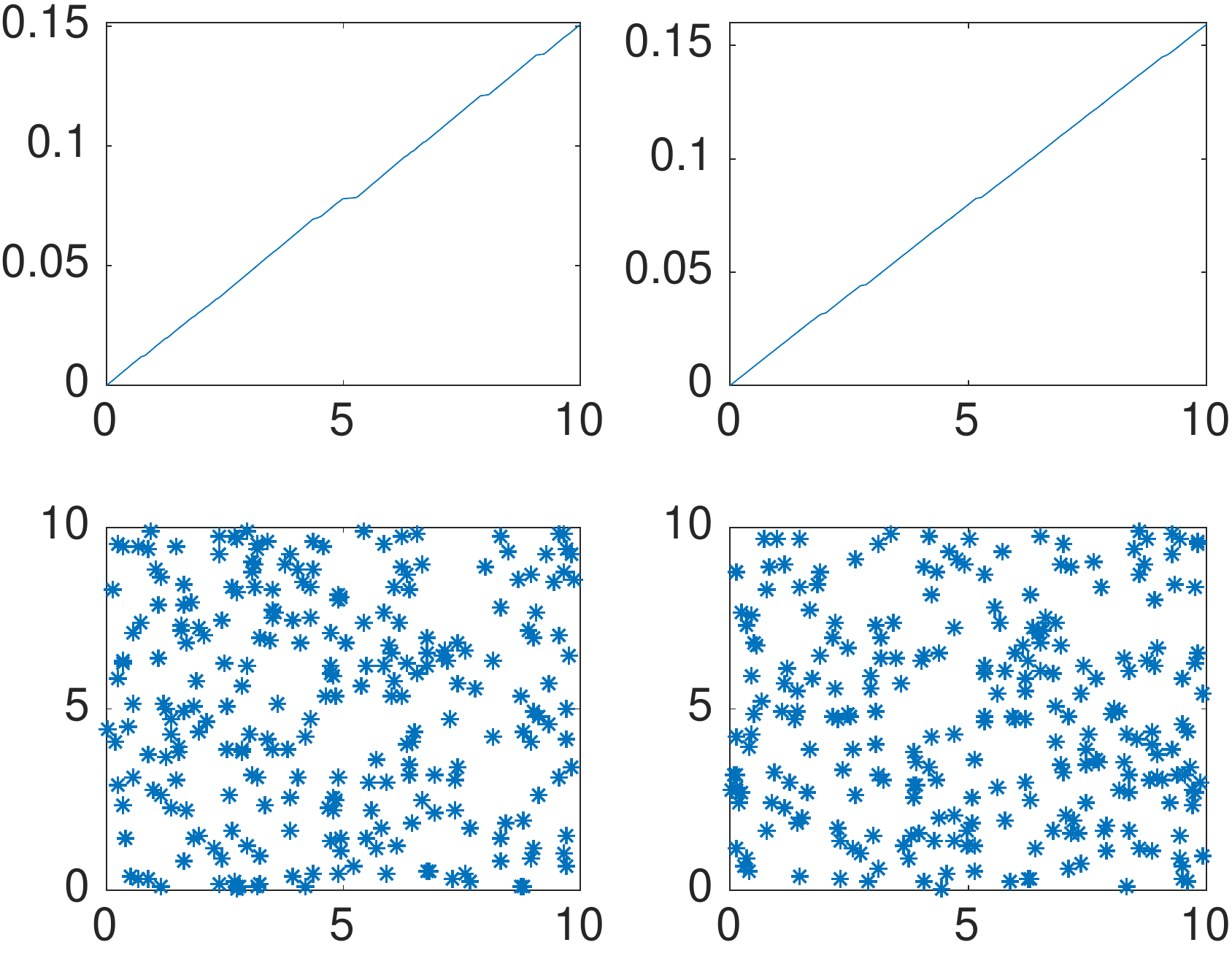}}
\caption{Simulations of the inverse stable subordinators $Y_{\alpha _{1}}^{(1)}(t)$ and $Y_{\alpha _{2}}^{(2)}(t)$
and the corresponding FPRF $N_{\alpha_1,\alpha_2}$ for different values of $\alpha_1$ and $\alpha_2$.
Top-left: simulation of $Y_{\alpha _{1}}^{(1)}(t)$, top-right: simulation of $Y_{\alpha _{2}}^{(2)}(t)$, 
bottom-(left-right): simulation of $N_{\alpha_1,\alpha_2}$, the rotation shows the connection with marginal intensity}\label{fig:1}
\end{figure}

The subordinators $L_{\alpha}$ are simulated exactly at times $t_n = n \Delta$, where
$\Delta = 0.0005$ till they reach a defined value $S_{\mathrm{end}}$.
More precisely, 
$$
L_{\alpha}(0) = 0; \qquad L_{\alpha}(t_n) = L_{\alpha}(t_{n-1}) + X  , \qquad n= 1, 2, \ldots , N
$$ 
where $X$ is independently simulated with {\texttt stblrnd($\alpha$, 1, $\sqrt[\alpha]{\Delta}$ , 0)}.
Accordingly,
\[
\mathrm{E}e^{-sX}=\exp \{-(s \sqrt[\alpha]{\Delta})^{\alpha }\} = 
\exp \{-\Delta s^{\alpha }\} ,\qquad s\geq 0,
\]
and hence
\[
\mathrm{E}e^{-sL_{\alpha}(t_n)} = 
\exp \{-t_ns^{\alpha }\} ,\qquad s\geq 0, n= 0,1,\ldots, N.
\]
The simulation of the inverse stable subordinators $Y_{\alpha}(s), s\in [0, T_{\mathrm{end}}]$
are thus made at times $s_n = L_{\alpha}(t_{n}), n=1,\ldots, N$ with values $Y_{\alpha}(s_n) = n \Delta$.

To simulate a FPRF $N_{\alpha_1,\alpha_2}(s^{1},s^{2})$ on the window
$(0,S_{\mathrm{end}})\times(0,S_{\mathrm{end}})$, we first simulate two independent inverse stable subordinators 
$Y_{\alpha _{1}}^{(1)}(s^{1}_n),n=1,\ldots, N_1$ and $Y_{\alpha _{2}}^{(2)}(s^{2}_n),n=1,\ldots, N_2 $.

By Proposition~\ref{thm:char1}, the value of $N_{\alpha_1,\alpha_2}$ 
on each rectangle $(s^{1}_n,s^{1}_{n+1})\times (s^{1}_n,s^{1}_{n+1})$ is a Poisson random variable
with mean $\Delta^2$. As $\Delta^2\ll 1$, we approximate it with a Bernoulli random variable $Y$ of parameter $\Delta^2$.
When $Y=1$, we add a point at random inside the rectangle.

In Figure~\ref{fig:1} the simulations 
of the inverse stable subordinators $Y_{\alpha _{1}}^{(1)}(t)$ and $Y_{\alpha _{2}}^{(2)}(t)$
and the corresponding FPRF $N_{\alpha_1,\alpha_2}$ for different values of $\alpha_1$ and $\alpha_2$ are shown.
The simulations of $N_{\alpha_1,\alpha_2}$ are plotted twice: we have rotated each figure in order to underline
the spatial dependence of the spread of the points of the process $N_{\alpha_1,\alpha_2}$ 
in connection with the marginal intensities $Y_{\alpha _{1}}^{(1)}(t)$ and $Y_{\alpha _{2}}^{(2)}(t)$.
For example, in Figure~\ref{fig:1}(c) two different marginal distribution are expected since $\alpha_1=0.9$ and
$\alpha_2=0.75$. While $Y_{0.9}^{(1)}(t)$ produces a quite uniform distribution of points, $Y_{0.75}^{(2)}(t)$ generates
clusters in correspondence of its steeper slopes.

We also compute the quantity
$$
\mathrm{P}\left( N(Y_{1}(t_{1}), Y_{2}(t_{2}))=k\right) = \int_{0}^{\infty }\int_{0}^{\infty }\frac{e^{-\lambda x_{1}x_{2}}(\lambda
x_{1}x_{2})^{k}}{k!}f_{\alpha _{1}}(t_{1},x_{1})f_{\alpha
_{2}}(t_{2},x_{2})dx_{1}dx_{2},
$$ 
given in \eqref{P2}, for different values of $t_1,t_2,\alpha_1$ and $\alpha_2$. In fact, with a Monte Carlo procedure, 
we approximate the above quantity with 
\[
\frac{1}{N^2}\sum_{n_1=1}^N \sum_{n_2=1}^N
\frac{e^{-\lambda x_{1}x_{2}}(\lambda
x_{1}x_{2})^{k}}{k!}
{\indI}_{X_{n_1}} (x_{1}) {\indI}_{Y_{n_2}} (x_{2})
\]
where $(X_{n},n=1,\ldots,N)$ and $(Y_{n},n=1,\ldots,N)$ are independent sequences of i.i.d.\ distributed as
$Y_{\alpha _{1}}^{(1)}(t_1)$ and $Y_{\alpha _{2}}^{(2)}(t_2)$, respectively. 
Summing up, the integral in \eqref{P2} is computed numerically, and
the simulations with $N=1500$ are
presented in  Figure~\ref{fig:2}. We underline the variety of the shape of distributions that can be generated with this two-parameter model in addition to its flexibility to include, for example, different cluster phenomena.

\begin{figure}
\centering
\subfigure[$t_1=t_2=5$,  $\alpha_1 = 0.5$, $\alpha_2 = .75$]{
\includegraphics[width=.45\textwidth]{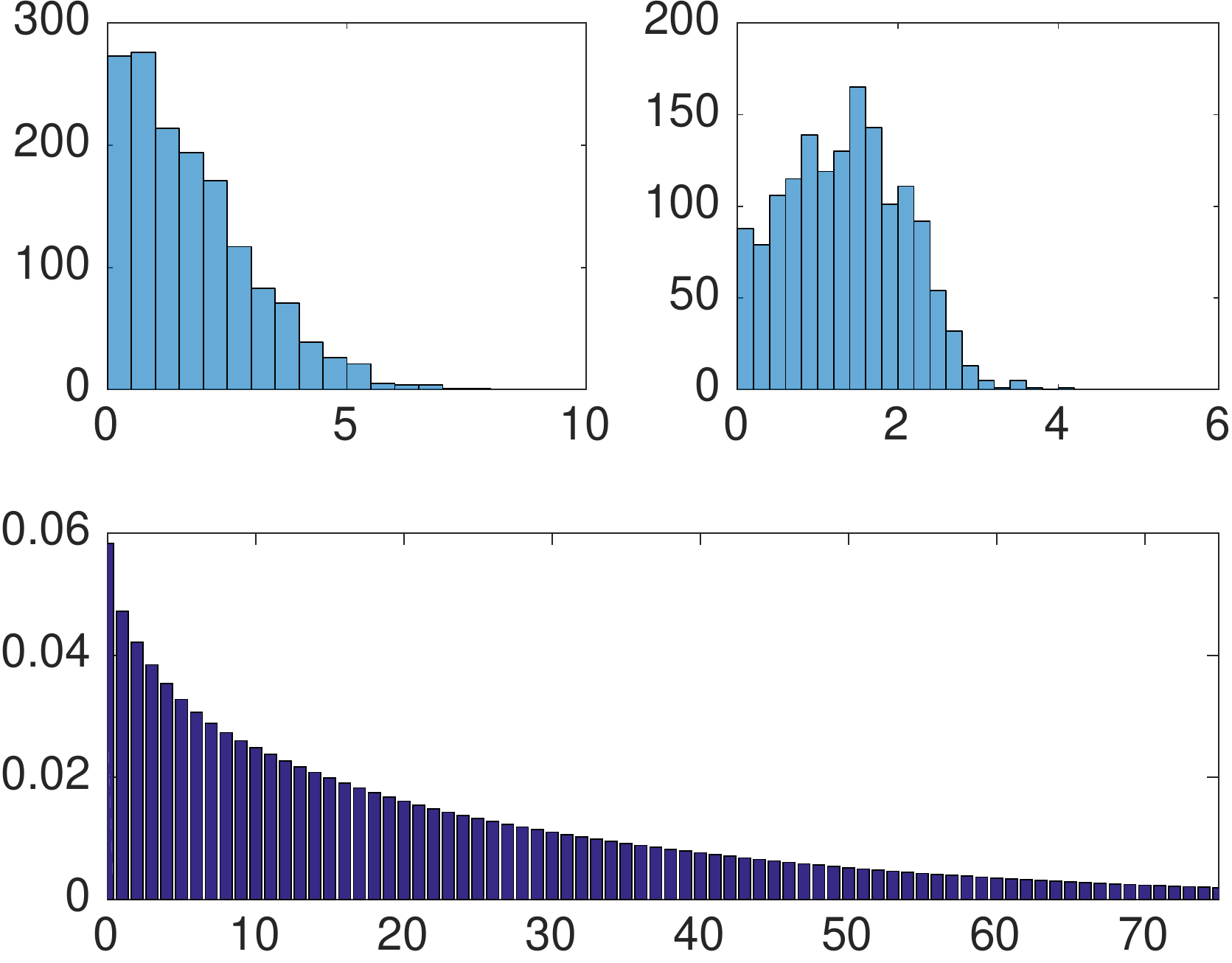}}
\qquad
\subfigure[$t_1=t_2=5$,  $\alpha_1 = 0.95$, $\alpha_2 = .075$]{
\includegraphics[width=.45\textwidth]{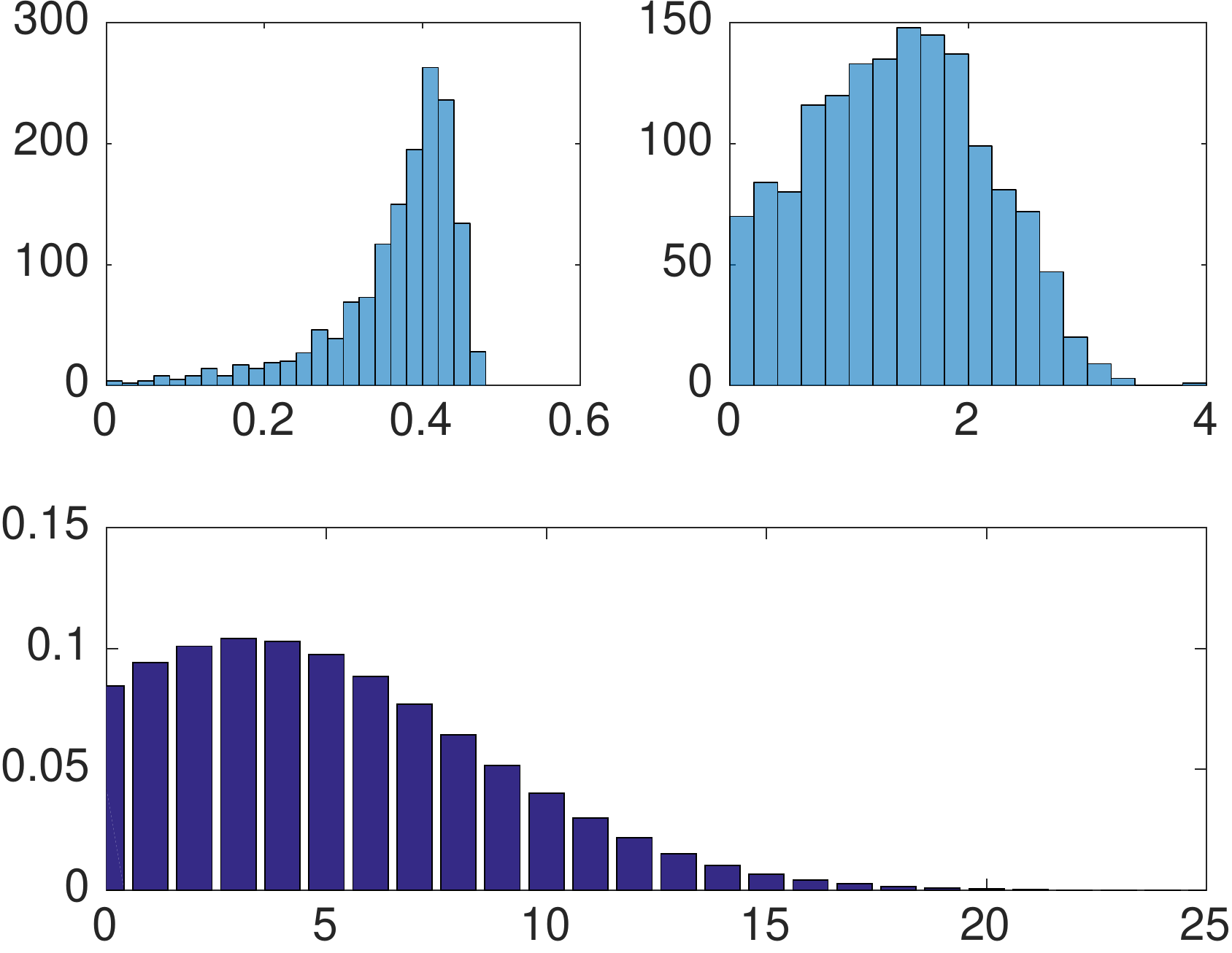}}
\\
\subfigure[$t_1=t_2=5$,  $\alpha_1 = \alpha_2 = 0.75$]{
\includegraphics[width=.45\textwidth]{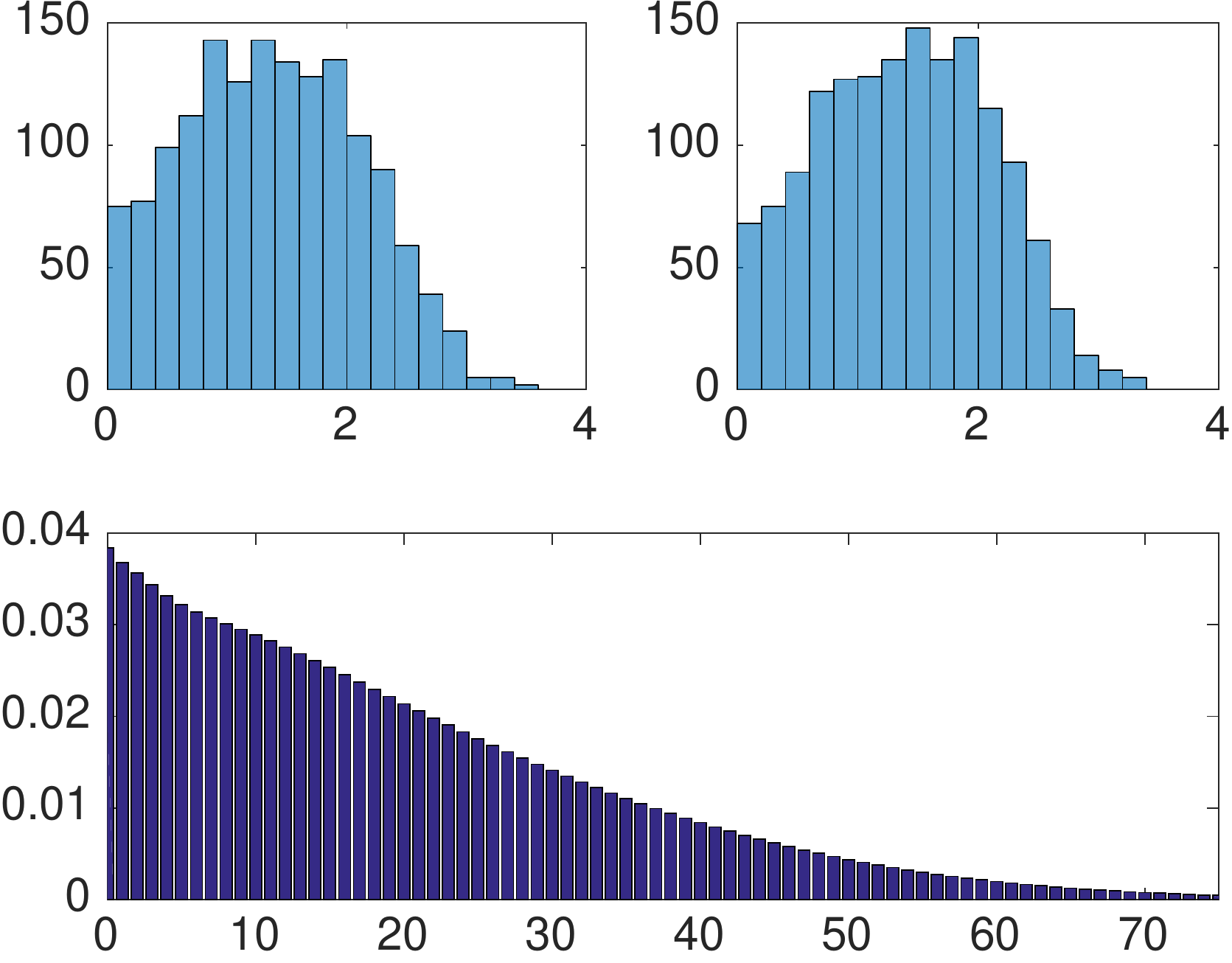}}
\qquad
\subfigure[$t_1=3$, $t_2=7$,  $\alpha_1 = \alpha_2 = 0.75$]{
\includegraphics[width=.45\textwidth]{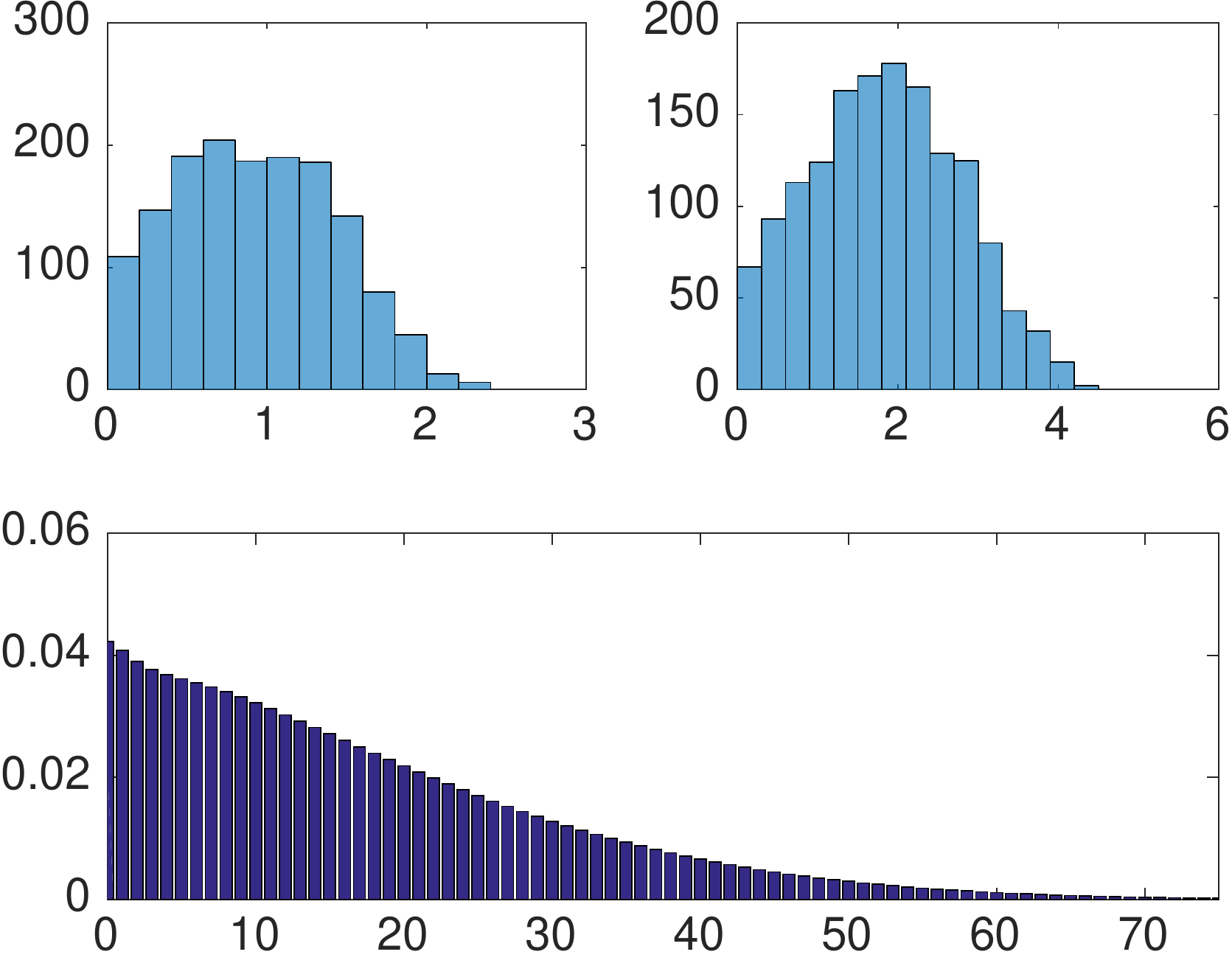}}
\caption{Simulations of the distribution of $Y_{\alpha _{1}}^{(1)}(t_1)$, $Y_{\alpha _{2}}^{(2)}(t_2)$
and the corresponding $p_{k}(t_{1},t_{2}) = \mathrm{P}\left( N(Y_{1}(t_{1}), Y_{2}(t_{2}))=k\right) $ 
for $\lambda = 10$ and different values of $t_1,t_2,\alpha_1$ and $\alpha_2$.}\label{fig:2}
\end{figure}

\begin{small}
\section*{Acknowledgement}
N. Leonenko and E. Merzbach wish to thank G. Aletti for two visits to University of Milan
\end{small}

\appendix

\section{Covariance Structure of Parameter-Changed Poisson random
fields}

In this Appendix, we prove a general result that can be used to compute the
covariance structure of the parameter-changed Poisson random field: 
\begin{equation*}
Z\left( t_{1},t_{2}\right) =N(Y_{1}(t_{1}),Y_{2}(t_{2})),\ (t_{1},t_{2})\in 
\mathbb{R}_{+}^{2},  
\end{equation*}
where $Y_{1}=\left\{ Y_{1}(t_{1}),t_{1}\geq 0\right\} $ and $Y_{2}=\left\{
Y_{2}(t_{2}),t_{2}\geq 0\right\} $ are independent non-negative
non-decreasing stochastic processes, in general non-Markovian with
non-stationary and non-independent increments, and $N=%
\{N(t_{1},t_{2}),(t_{1},t_{2})\in \mathbb{R}_{+}^{2}\}$ is a PRF with
intensity $\lambda >0$. We also assume that $Y_{1}$ and $Y_{2}$ are
independent of $N.$

For example, $Y_{1}$ and $Y_{2}$ might be inverse subordinators.

\begin{theorem}\label{thm:A1}
Suppose that $N$ is a PRF, $Y_{1}$ and $Y_{2}$ are two non-decreasing
non-negative independent stochastic processes which are also independent of $%
N. $ Then

1) if $\mathrm{E}Y_{1}(t_{1})=U_{1}(t_{1})$\ and $\mathrm{E}%
Y_{2}(t_{2})=U_{2}(t_{2})$ exist, then $\mathrm{E}Z(t_{1},t_{2})$ exists and 
\begin{equation*}
\mathrm{E}Z(t_{1},t_{2})=\mathrm{E}N(1,1)\mathrm{E}Y_{1}(t_{1})\mathrm{E}%
Y_{2}(t_{2}); 
\end{equation*}

2) if $Y_{1}$ and $Y_{2}$ have second moments, so does $Z$ and 
\begin{gather*}
\mathrm{Var}Z(t_{1},t_{2})=\left[ \mathrm{E}N(1,1)\right] ^{2}\left\{ 
\mathrm{E}Y_{1}^{2}(t_{1})\mathrm{E}Y_{2}^{2}(t_{2})-\left( \mathrm{E}%
Y_{1}(t_{1})\right) ^{2}\left( \mathrm{E}Y_{2}(t_{2})\right)
^{2}\right\}  
\\
+\mathrm{Var}N(1,1)\mathrm{E}Y_{1}(t_{1})\mathrm{E}Y_{2}(t_{2})  
\end{gather*}
and its covariance function 
\begin{equation*}
\mathrm{Cov}(Z(t_{1},t_{2}),Z(s_{1},s_{2}))=\mathrm{Cov}\left(
N(Y_{1}(t_{1}),Y_{2}(t_{2})),N(Y_{1}(s_{1}),Y_{2}(s_{2}))\right)   
\end{equation*}
for $s_{1}<t_{1},s_{2}<t_{2}$ is given by: 
\begin{multline} \label{3.5}
(\mathrm{E}N(1,1))^{2} \Big\{ \mathrm{Cov}\left(
Y_{1}(t_{1}),Y_{1}(s_{1})\right) \mathrm{Cov}\left(
Y_{2}(t_{2}),Y_{2}(s_{2})\right) \\
+\mathrm{E}Y_{2}(t_{2})\mathrm{E}Y_{2}(s_{2})  
\mathrm{Cov}\left(Y_{1}(t_{1}),Y_{1}(s_{1})\right) 
+\mathrm{E}Y_{1}(t_{1})\mathrm{E}Y_{1}(s_{1})
\mathrm{Cov}\left(Y_{2}(t_{2}),Y_{2}(s_{2})\right) \Big\} \\
+ \mathrm{Var}N(1,1)\mathrm{E}Y_{1}(s_{1})\mathrm{E}Y_{2}(s_{2})  
\end{multline}
and for any $(s_{1},s_{2}),$ and $(t_{1},t_{2})$ from $\mathbb{R}_{+}^{2}$ 
\begin{multline} \label{3.5b}
(\mathrm{E}N(1,1))^{2} \Big\{ \mathrm{Cov}\left(
Y_{1}(t_{1}),Y_{1}(s_{1})\right) \mathrm{Cov}\left(
Y_{2}(t_{2}),Y_{2}(s_{2})\right) \\
+\mathrm{E}Y_{2}(t_{2})\mathrm{E}Y_{2}(s_{2})  
\mathrm{Cov}\left(Y_{1}(t_{1}),Y_{1}(s_{1})\right) 
+\mathrm{E}Y_{1}(t_{1})\mathrm{E}Y_{1}(s_{1})
\mathrm{Cov}\left(Y_{2}(t_{2}),Y_{2}(s_{2})\right) \Big\} \\
+ \mathrm{Var}N(1,1)\mathrm{E}Y_{1}(\min(s_{1},t_{1}))\mathrm{E}Y_{2}(\min(s_{2},t_{2})) 
\end{multline}
\end{theorem}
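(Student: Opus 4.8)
The plan is to condition throughout on the pair $(Y_{1},Y_{2})$, exploiting that $N$ is independent of $(Y_{1},Y_{2})$ and that, for fixed deterministic arguments, the PRF obeys $\mathrm{E}N(u_{1},u_{2})=\mathrm{E}N(1,1)\,u_{1}u_{2}$ together with
\[
\mathrm{Cov}(N(u_{1},u_{2}),N(v_{1},v_{2}))=\mathrm{Var}N(1,1)\,\min(u_{1},v_{1})\min(u_{2},v_{2}),
\]
the latter because the rectangles $[0,u_{1}]\times[0,u_{2}]$ and $[0,v_{1}]\times[0,v_{2}]$ intersect in $[0,\min(u_{1},v_{1})]\times[0,\min(u_{2},v_{2})]$, so the PRF covariance $\lambda|A_{1}\cap A_{2}|$ reduces to the product of minima. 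Part~1 is then immediate: conditioning gives $\mathrm{E}Z(t_{1},t_{2})=\mathrm{E}N(1,1)\,\mathrm{E}[Y_{1}(t_{1})Y_{2}(t_{2})]$, and the independence of $Y_{1}$ and $Y_{2}$ factors the remaining expectation into the stated product.

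For Part~2 I would apply the two-term conditional covariance identity
\[
\mathrm{Cov}(Z_{1},Z_{2})=\mathrm{E}\big(\mathrm{Cov}(Z_{1},Z_{2}\mid Y)\big)+\mathrm{Cov}\big(\mathrm{E}(Z_{1}\mid Y),\mathrm{E}(Z_{2}\mid Y)\big)
\]
with $Y=(Y_{1},Y_{2})$, $Z_{1}=N(Y_{1}(t_{1}),Y_{2}(t_{2}))$ and $Z_{2}=N(Y_{1}(s_{1}),Y_{2}(s_{2}))$. Conditioning on the \emph{pair} $(Y_{1},Y_{2})$ at once is exactly what keeps this formula two-termed and sidesteps the iterated decomposition flagged in the Remark above. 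The conditional means equal $\mathrm{E}N(1,1)\,Y_{1}(\cdot)Y_{2}(\cdot)$, so the second summand becomes $(\mathrm{E}N(1,1))^{2}\,\mathrm{Cov}(Y_{1}(t_{1})Y_{2}(t_{2}),Y_{1}(s_{1})Y_{2}(s_{2}))$.

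The algebraic core is expanding this covariance of products. Setting $A=Y_{1}(t_{1})$, $C=Y_{1}(s_{1})$, $B=Y_{2}(t_{2})$, $D=Y_{2}(s_{2})$, the independence of the block $(A,C)$ (a functional of $Y_{1}$) from the block $(B,D)$ (a functional of $Y_{2}$) yields
\[
\mathrm{Cov}(AB,CD)=\mathrm{Cov}(A,C)\mathrm{Cov}(B,D)+\mathrm{E}[B]\mathrm{E}[D]\mathrm{Cov}(A,C)+\mathrm{E}[A]\mathrm{E}[C]\mathrm{Cov}(B,D),
\]
which is precisely the three-term brace appearing in both \eqref{3.5} and \eqref{3.5b}. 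For the first summand, the conditional covariance of the field equals $\mathrm{Var}N(1,1)\,\min(Y_{1}(t_{1}),Y_{1}(s_{1}))\min(Y_{2}(t_{2}),Y_{2}(s_{2}))$; here monotonicity of the $Y_{i}$ is decisive, giving $\min(Y_{i}(t_{i}),Y_{i}(s_{i}))=Y_{i}(\min(s_{i},t_{i}))$, so that taking expectations and factoring by independence produces the term $\mathrm{Var}N(1,1)\,\mathrm{E}Y_{1}(\min(s_{1},t_{1}))\mathrm{E}Y_{2}(\min(s_{2},t_{2}))$ of \eqref{3.5b}. Specializing to $s_{1}<t_{1}$, $s_{2}<t_{2}$ collapses the minima and reproduces \eqref{3.5}, while the diagonal case $s_{i}=t_{i}$, after rewriting the brace as $\mathrm{E}Y_{1}^{2}(t_{1})\mathrm{E}Y_{2}^{2}(t_{2})-(\mathrm{E}Y_{1}(t_{1}))^{2}(\mathrm{E}Y_{2}(t_{2}))^{2}$, yields the variance formula.

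The main obstacle is purely bookkeeping in the covariance-of-products step: one must resist splitting within a block, since $A$ and $C$ are correlated (both derive from $Y_{1}$) and only the cross-independence of $(A,C)$ from $(B,D)$ may be invoked. Once this is handled cleanly, the intersection computation for the PRF covariance and the monotonicity identity for the minima are entirely routine, and assembling the two summands of the conditional covariance formula delivers \eqref{3.5} and \eqref{3.5b}.
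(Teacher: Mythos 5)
Your proof is correct, and it reaches \eqref{3.5}, \eqref{3.5b} and the variance formula by a genuinely different organization than the paper's. The paper never invokes the law of total covariance for the field: it computes the mixed product moment $\mathrm{E}[N(s_1,s_2)N(t_1,t_2)]$ of the PRF at \emph{deterministic} arguments by decomposing $N(t_1,t_2)$ into increments over disjoint rectangles, obtaining $[\mathrm{E}N(1,1)]^2 t_1t_2s_1s_2 + s_1s_2\,\mathrm{Var}N(1,1)$ in the case $s_1<t_1$, $s_2<t_2$; it then integrates against the law of $(Y_1,Y_2)$, subtracts the product of the means, and relegates the mixed ordering $s_1>t_1$, $s_2<t_2$ to a separate ``similar'' case. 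You instead feed in the PRF covariance formula $\lambda|A_1\cap A_2|$, which encodes all orderings simultaneously as $\mathrm{Var}N(1,1)\min(u_1,v_1)\min(u_2,v_2)$, and then split by the two-term law of total covariance with \emph{joint} conditioning on $(Y_1,Y_2)$: the conditional-covariance part yields the $\mathrm{Var}N(1,1)$ term via monotonicity ($\min(Y_i(t_i),Y_i(s_i))=Y_i(\min(s_i,t_i))$), and the covariance of the conditional means yields the three-term brace via the covariance-of-products identity for the independent blocks $(Y_1(t_1),Y_1(s_1))$ and $(Y_2(t_2),Y_2(s_2))$. The two computations are algebraically equivalent (total covariance is just ``product moment minus product of means'' rearranged), but yours buys uniformity: \eqref{3.5b} emerges directly with the minima, and \eqref{3.5} and the variance formula are specializations rather than separate cases. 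Your observation that conditioning on the pair at once keeps the formula two-termed is also exactly right, and it shows that the paper's stated reason for avoiding this route (the Remark quoting the three-term iterated decomposition of \cite{BroSwa} as motivation for ``another technique'' in the Appendix) concerns iterated conditioning, not joint conditioning. One caveat: the min-formula you cite is immediate for the PRF, but the Remark following the theorem claims validity for general L\'evy random fields; there $\mathrm{Cov}(N(u_1,u_2),N(v_1,v_2))=\mathrm{Var}N(1,1)\min(u_1,v_1)\min(u_2,v_2)$ must itself be justified by independence of increments over disjoint rectangles --- precisely the decomposition the paper's proof makes explicit and your argument leaves implicit.
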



\begin{remark}
These formulae are valid for any L\'{e}vy random field $N=\{N(t_{1},t_{2})$,$%
(t_{1},t_{2})\in \mathbb{R}_{+}^{2}\}$, with finite expectation $\mathrm{E}%
N(1,1)$ and finite variance $\mathrm{Var}N(1,1),$ 
for PRF $\mathrm{E}%
N(1,1)=\lambda ;\ \mathrm{Var}N(1,1)=\lambda $ and to apply these formulae
one needs to know 
\begin{equation*}
U_{1}(t_{1})=\mathrm{E}Y_{1}(t),\ U_{2}(t_{2})=\mathrm{E}Y_{2}(t),\
U_{1}^{(2)}(t_{1})=\mathrm{E}Y_{1}^{2}(t),~U_{2}^{(2)}(t_{1})=\mathrm{E}%
Y_{2}^{2}(t),\ 
\end{equation*}
and $\mathrm{Cov}\left( Y_{1}(t_{1}),Y_{1}(s_{1})\right) ,\ \mathrm{Cov}%
\left( Y_{2}(t_{2}),Y_{2}(s_{2})\right) $ which are available for many
non-negative processes $Y_{1}(t)$ and $Y_{2}(t)$ induction inverse
subordinators.
\end{remark}

\begin{remark}
One can compute the following expression for the one-dimensional
distribution of the parameter-changed PRF: 
\begin{multline*}
\mathrm{P}\left( N(Y_{1}(t_{1}), Y_{2}(t_{2}))=k\right) = p_{k}(t_{1},t_{2})
\\
= \int_{0}^{\infty }\int_{0}^{\infty }\frac{e^{-\lambda x_{1}x_{2}}(\lambda
x_{1}x_{2})^{k}}{k!}f_{1}(t_{1},x_{1})f_{2}(t_{2},x_{2})dx_{1}dx_{2},\
k=0,1,2,\ldots
\end{multline*}
where 
\begin{equation*}
f_{i}(t_{i},x_{i}) = \frac{d}{d{x_i}}\mathrm{P}\left\{ Y_{i}(t_{i})\leq
x_{i}\right\} =\frac{d}{dx_{i}}G_{t_i}^{(i)}(x_{i}), \qquad i=1,2.
\end{equation*}
and its Laplace transform: 
\begin{equation*}
\mathcal{L} \left\{ p_{k}(t_{1},t_{2});
s_{1},s_{2} \right\}=\int_{0}^{\infty }\int_{0}^{\infty }\frac{e^{-\lambda
x_{1}x_{2}}(\lambda x_{1}x_{2})^{k}}{k!} \mathcal{L} \left\{
f_{1}(t_{1},x_{1});s_{1}\right\}  \mathcal{L} \left\{
f_{2}(t_{2},x_{2});s_{2}\right\} dx_{1}dx_{2},
\end{equation*}
where 
\begin{equation*}
\mathcal{L} \left\{ f_{i}(t_{i},x_{i});s_{i}\right\} =\int_{0}^{\infty
}e^{-s_{i}t_{i}}f_{i}(t_{i},x_{i})dt_{i},\qquad i=1,2.
\end{equation*}
\end{remark}

\begin{proof}[Proof of Theorem~\ref{thm:A1}]
We denote 
\begin{align*}
& G_{t_{1}}^{(1)}(u_{1})=\mathrm{P}\left\{ Y_{1}(t_{1})\leq u_{1}\right\} ,
&
G_{t_{2}}^{(2)}(u_{2})=\mathrm{P}\left\{ Y_{2}(t_{2})\leq u_{2}\right\} .
\end{align*}

We know that for a PRF 
\begin{equation*}
\mathrm{E}\Delta _{s_{1},s_{2}}N(t_{1},t_{2})=\mathrm{E}N(1,1) \left(
t_{1}-s_{1}\right) \left( t_{2}-s_{2}\right) =\mathrm{Var}\Delta
_{s_{1},s_{2}}N(t_{1},t_{2});
\end{equation*}
\begin{equation*}
\mathrm{E}\left( \Delta _{s_{1},s_{2}}N(t_{1},t_{2})\right) ^{2}=\mathrm{E}N(1,1)
\left( t_{1}-s_{1}\right) \left( t_{2}-s_{2}\right) +\left[ \mathrm{E}N(1,1)\left(
t_{1}-s_{1}\right) \left( t_{2}-s_{2}\right) \right] ^{2}.
\end{equation*}

To prove 1) we use simple conditioning arguments: 
\begin{equation*}
\mathrm{E}Z(t_{1},t_{2})=\int_{0}^{\infty }\int_{0}^{\infty }u\ v\ \mathrm{E}%
N(1,1)G_{t_{1}}^{(1)}(du)G_{t_{2}}^{(2)}(dv)=\mathrm{E}N(1,1)\mathrm{E}%
Y_{1}(t_{1})\mathrm{E}Y_{2}(t_{2}).
\end{equation*}

Let us prove 2).

For the variance, we have 
\begin{eqnarray*}
\mathrm{Var}Z(t_{1},t_{2}) &=&\mathrm{E}\left(
N(Y_{1}(t_{1}),Y_{2}(t_{2})\right) ^{2}-\left( \mathrm{E}%
N(Y_{1}(t_{1}),Y_{2}(t_{2})\right) ^{2} \\
&=&\int_{0}^{\infty }\int_{0}^{\infty }\left( (\mathrm{E}N(u_{1},u_{2}))^{2}+%
\mathrm{Var}N(u_{1},u_{2})\right)
G_{t_{1}}^{(1)}(du_{1})G_{t_{2}}^{(2)}(du_{2}) \\
&&-\left( \mathrm{E}N(1,1)\mathrm{E}Y_{1}(t_{1})\mathrm{E}%
Y_{2}(t_{2})\right) ^{2} \\
&=&\int_{0}^{\infty }\int_{0}^{\infty }\left[ \left( \mathrm{E}N(1,1)\right)
^{2}u_{1}^{2}u_{2}^{2}+\mathrm{Var}N(1,1)u_{1}u_{2}\right]
G_{t_{1}}^{(1)}(du_{1})G_{t_{2}}^{(2)}(du_{2}) \\
&&-\left( \mathrm{E}N(1,1)\mathrm{E}Y_{1}(t_{1})\mathrm{E}%
Y_{2}(t_{2})\right) ^{2} \\
&=&\left( \mathrm{E}N(1,1)\right) ^{2}\mathrm{E}Y_{1}^{2}(t_{1})\mathrm{E}%
Y_{2}^{2}(t_{2})+\mathrm{Var}N(1,1)\mathrm{E}Y_{1}(t_{1})\mathrm{E}%
Y_{2}(t_{2}) \\
&&-\left( \mathrm{E}N(1,1)\mathrm{E}Y_{1}(t_{1})\mathrm{E}%
Y_{2}(t_{2})\right) ^{2} \\
&=&\left( \mathrm{E}N(1,1)\right) ^{2}\left\{ \mathrm{E}Y_{1}^{2}(t_{1})%
\mathrm{E}Y_{2}^{2}(t_{2})-(\mathrm{E}Y_{1}(t_{1}))^{2}(\mathrm{E}%
Y_{2}(t_{2}))^{2}\right\} \\
&&+\mathrm{Var}N(1,1)\mathrm{E}Y_{1}(t_{1})\mathrm{E}Y_{2}(t_{2}).
\end{eqnarray*}
To compute the covariance structure, first we consider the case when $%
s_{1}<t_{1},\ s_{2}<t_{2}$.
Then 
\begin{align*}
&\mathrm{E}N(s_{1},s_{2})N(t_{1},t_{2}) \\
&=\mathrm{E} \Big(N(s_{1},s_{2}) %
\Big\{N(t_{1},t_{2})-N(t_{1},s_{2})-N(s_{1},t_{2})+N(s_{1},s_{2})\\
& \qquad \qquad +N(t_{1},s_{2})+N(s_{1},t_{2})-N(s_{1},s_{2}) \Big\}\Big) \\
&=\mathrm{E}\Delta _{s_{1},s_{2}}N(t_{1},t_{2})\mathrm{E}N(s_{1},s_{2})+%
\mathrm{E}N(t_{1},s_{2})N(s_{1},s_{2})+\mathrm{E}%
N(s_{1},t_{2})N(s_{1},s_{2})-\mathrm{E}N^{2}(s_{1},s_{2}).
\end{align*}
Using the facts that 
\begin{align*}
&\mathrm{E}\Delta _{s_{1},s_{2}}N(t_{1},t_{2})\mathrm{E}%
N(s_{1},s_{2}) =(t_{1}-s_{1})(t_{2}-s_{2})\left[ \mathrm{E}N(1,1)\right]
^{2}s_{1}s_{2},
\\
&
\begin{aligned}
\mathrm{E}N(t_{1},s_{2})N(s_{1},s_{2}) &=\mathrm{E}%
\{N(t_{1},s_{2})-N(s_{1},s_{2})+N(s_{1},s_{2})\}N(s_{1},s_{2})
\\
&=\mathrm{E}\Delta _{s_{1},0}N(t_{1},s_{2})\mathrm{E}N(s_{1},s_{2})+\mathrm{E}%
N^{2}(s_{1},s_{2})\\
&=\left[ \mathrm{E}N(1,1)\right]
^{2}(t_{1}-s_{1})s_{1}s_{2}^{2}+\mathrm{E}N^{2}(s_{1},s_{2}),
\end{aligned}%
\intertext{it is easy to obtain}%
&\mathrm{E}N(s_{1},s_{2})N(t_{1},t_{2})=\left[ \mathrm{E}N(1,1)\right]
^{2}t_{1}t_{2}s_{1}s_{2}+s_{1}s_{2}\mathrm{Var}N(1,1).
\end{align*}
Since the processes $N,Y_{1},Y_{2}$ are independent, a conditioning argument
yields (\ref{3.5}) and (\ref{3.5b}). In a similar way, one can consider the case $s_{1}>t_{1}, s_{2}<t_{2}$.
\end{proof}

\begin{proof}[Proof of Proposition~\ref{prop:4.2}]
 It follows from Theorem~\ref{thm:A1} and Proposition~\ref{prop:2.1}.
\end{proof}


\end{document}